\documentclass{amsart}
\usepackage{color}

\allowdisplaybreaks[3]

\usepackage{amsfonts}
\usepackage{epsf,latexsym,amsfonts,amsbsy,mathrsfs}
\usepackage{amsmath, amssymb, amsthm,amscd,amsxtra}
\usepackage[]{graphicx,subfigure}
\usepackage{epstopdf}
\usepackage{float}
\usepackage{placeins}
\usepackage{amsmath}
\usepackage{amssymb}
\usepackage{multirow}
\usepackage{float}
\usepackage{stmaryrd}
\usepackage{epsfig}
\usepackage{tikz}
\usepackage{cite}
\usetikzlibrary{positioning} 
\usepackage{xcolor}
\usepackage{marginnote}
\usepackage{sidenotes}
\usepackage{booktabs}
\usepackage{multirow}
\usepackage{arydshln}

\usepackage{changes}
\usepackage{lipsum}
\usepackage[colorlinks=true, linkcolor=blue, citecolor=blue]{hyperref}
\usepackage{cleveref}
\allowdisplaybreaks

\newtheorem{theorem}{Theorem}[section]
\newtheorem{lemma}[theorem]{Lemma}
\newtheorem{corollary}[theorem]{Corollary}

\crefname{theorem}{Theorem}{Theorems}
\crefname{lemma}{Lemma}{Lemmas}
\crefname{corollary}{Corollary}{Corollary}
\crefname{assumption}{Assumption}{Assumption}

\AddToHook{env/theorem/begin}{\crefalias{theorem}{theorem}}
\AddToHook{env/lemma/begin}{\crefalias{theorem}{lemma}}
\AddToHook{env/corollary/begin}{\crefalias{theorem}{corollary}}
\AddToHook{env/assumption/begin}{\crefalias{theorem}{assumption}}

\theoremstyle{definition}

\newtheorem{assumption}[theorem]{Assumption}

\theoremstyle{remark}
\newtheorem{remark}[theorem]{Remark}

\numberwithin{equation}{section}

\newcommand{\ignore}[1]{}

\newcommand{\Be}{{\boldsymbol{e}}}

\newcommand{\Bn}{{\boldsymbol{n}}}

\newcommand{\Bq}{{\boldsymbol{q}}}
\newcommand{\Br}{{\boldsymbol{r}}}

\newcommand{\Bv}{{\boldsymbol{v}}}
\newcommand{\Bw}{{\boldsymbol{w}}}

\newcommand{\BH}{{\boldsymbol{H}}}

\newcommand{\BV}{{\boldsymbol{V}}}

\begin{document}

    \title[Physics-Driven Multiscale Reduction For Helmholtz Equation]
{A Physics-Driven Multiscale Method for High-Frequency Helmholtz Equations}\thanks{The work was supported by National Key Research and Development Project of China (Grant No. 2024YFA1012600). Huangxin Chen was supported by National Natural Science Foundation of China (Grant No. 12471345, 12526102) and Fujian Provincial Natural Science Foundation of China (Grant No. 2026J011002). Shubin Fu was supported by the National Natural Science Foundation of China (Grant No. 12301514). Zhiyu Tan was supported by the National Natural Science Foundation of China (Grant No. 12401542) and the Natural Science Foundation of Fujian Province of China (Grant No. 2026J001044).}

	\author{Huangxin Chen}
	\address{School of Mathematical Sciences and Fujian Provincial Key Laboratory on Mathematical Modeling and
		High Performance Scientific Computing, Xiamen University, Fujian, 361005, China}
	\email{chx@xmu.edu.cn}
    
    \author{Jianhui Chen}
	\address{School of Mathematical Sciences and Fujian Provincial Key Laboratory on Mathematical Modeling and
		High Performance Scientific Computing, Xiamen University, Fujian, 361005, China}
	\email{chenjianhui1@stu.xmu.edu.cn}
    
    \author{Shubin Fu}
\thanks{Corresponding author: Shubin Fu (\texttt{sfu@eitech.edu.cn}).}
\address{School of Mathematical Sciences, Eastern Institute of Technology,
Ningbo, Zhejiang 315200, China}
\email{sfu@eitech.edu.cn}

	\author{Zhiyu Tan}
	\address{School of Mathematical Sciences and Fujian Provincial Key Laboratory on Mathematical Modeling and
		High Performance Scientific Computing, Xiamen University, Fujian, 361005, China}
	\email{zhiyutan@xmu.edu.cn}
    
    \pagestyle{myheadings}
    \thispagestyle{plain}
    \markboth{Physics-Driven Multiscale Reduction For The Helmholtz Equation}{Huangxin Chen, Jianhui Chen, Shubin Fu, Zhiyu Tan}
	
	\subjclass[2020]{Primary 65N30; Secondary 65N12, 35J05}
	
	\keywords{Helmholtz equation, generalized multiscale finite element method,
velocity elimination, oversampling, inf--sup stability, spectral approximation}

	\date{}
	
\begin{abstract}
We develop and analyze a physics-driven multiscale method for model reduction
of the Helmholtz equation at high frequency. The method constructs a reduced
approximation space from local wave responses within a mixed finite element
formulation with velocity elimination. These responses are computed by
solving Helmholtz problems with boundary excitations on oversampled regions,
giving snapshots that incorporate the oscillatory behavior and propagation
characteristics of the underlying medium. Local generalized eigenvalue
problems then select modes from this physically informed space to form a
reduced global system with sparse interelement coupling. To establish
stability and convergence of this reduction, we relate the local spectral
approximation to approximation of the global adjoint problem. Starting
from an inf--sup estimate for the fine-grid formulation, this argument
yields a multiscale inf--sup condition governed by fine-grid resolution,
oversampling, and the first omitted local eigenvalues. The resulting
stability establishes well-posedness and quasi-optimality of the multiscale
approximation in a discrete energy norm. Together with local spectral
estimates, it gives an a priori error bound that explicitly quantifies the
roles of the coarse and fine mesh sizes, oversampling depth, spectral
truncation, and wavenumber. This analysis provides a quantitative link
between local space reduction and the stability and accuracy of the
global approximation. Numerical experiments in two and three dimensions
demonstrate accurate wavefield approximation with substantial reductions
in global dimension and solution cost for homogeneous and strongly
heterogeneous media, including perfectly matched layer truncations.
\end{abstract}
	
	\maketitle
	
	\section{Introduction}

The Helmholtz equation models time-harmonic wave propagation and scattering
in acoustics, seismic imaging, and optical devices
\cite{VirieuxOperto2009,GaoFuChung2018,GiammatteoHeinleinLedererSchlottbom2025}.
Its numerical approximation becomes particularly demanding when high
frequency and material heterogeneity occur together. The mesh must resolve
both the wavelength and the fine scales of the medium, while the pollution
effect in standard low-order finite element methods can require refinement
beyond wavelength resolution
\cite{IhlenburgBabuska1995,IhlenburgBabuska1997,BabuskaSauter1997}.
The resulting large, indefinite systems are also challenging to solve
\cite{GanderZhang2019}. In imaging and inversion, this cost is compounded by
the need to compute wavefields for many sources at a fixed medium and
frequency. A reduced approximation space that retains the medium-dependent
wave information and can be reused across sources is therefore of
particular interest.

Two closely related questions govern the construction of such a space:
how to represent oscillatory solutions efficiently, and how to preserve
stability when these functions are coupled globally. These questions have
motivated several developments in Helmholtz discretization. Generalized
and partition-of-unity finite element methods incorporate problem-adapted
enrichment functions into the approximation space
\cite{BabuskaIhlenburgPaikSauter1995,MelenkBabuska1996}.
Interior-penalty discontinuous Galerkin and continuous interior-penalty
methods modify the coupling to improve stability and control pollution
\cite{FengWu2009,Wu2014}, while hybridizable discontinuous Galerkin methods
admit wavenumber-explicit stability and error analyses
\cite{ChenLuXu2013}. Trefftz methods build the local wave equation directly
into the approximation functions, with plane-wave discontinuous Galerkin
methods providing an important example
\cite{HiptmairMoiolaPerugia2011,HiptmairMoiolaPerugia2016}.
For general heterogeneous media, however, prescribed plane waves must be
replaced by functions adapted to the local coefficients. This leads
naturally to multiscale spaces generated by localized numerical solves.

Multiscale methods transfer fine-scale information into a smaller globally
coupled problem. Early approaches to heterogeneous Helmholtz equations
include subgrid Green-function methods, multiscale asymptotic
constructions, and heterogeneous multiscale methods
\cite{OberaiPinsky1998,CaoCuiZhu2002,CiarletStohrer2014,OhlbergerVerfurth2018}.
Homogenization-based constructions exploit an effective description or a
two-scale structure. For media without explicit scale separation, local
fine-grid problems provide another means of incorporating the coefficients
into the approximation space, as in the multiscale finite element method
\cite{HouWu1997}. The generalized multiscale finite element method
(GMsFEM) develops this approach through a snapshot--spectral construction:
local solution spaces are first sampled through fine-scale problems and
then compressed by local generalized eigenvalue problems
\cite{EfendievGalvisHou2013,ChungEfendievHou2016}. The selected eigenmodes
retain medium-dependent wave information while reducing the number of
global unknowns. This construction has been applied to heterogeneous wave
propagation and Helmholtz problems
\cite{ChungEfendievLeung2014,FuGao2017}, with further developments using
edge-based, wavelet-based, and scattering-oriented spaces
\cite{FuLiCrasterGuenneau2021,FuChungLi2021,KalachikovaVasilyevaHarrisChung2023}.

For the indefinite Helmholtz operator, the analysis of this reduction must
connect local approximation to global stability. Operator-adapted
multiscale methods provide several ways of establishing this connection.
Localized orthogonal decomposition (LOD) constructs coarse functions
through localized fine-scale corrections and yields stable, quasi-optimal
approximations under resolution and oversampling conditions
\cite{GallistlPeterseim2015,Peterseim2017,BrownGallistlPeterseim2017,PeterseimVerfurth2020}.
Multi-resolution and super-localized variants further improve the
localization of the fine-scale responses
\cite{HauckPeterseim2022,FreeseHauckPeterseim2024}.
From a spectral perspective, multiscale spectral generalized finite element
methods construct locally optimal spaces and establish wavenumber-explicit
convergence estimates \cite{MaAlberScheichl2023}; exponentially convergent
local approximation spaces have also been developed for heterogeneous
Helmholtz equations \cite{ChenHouWang2023}. A related approach is approximate
component mode synthesis, which separates local Helmholtz problems from a
global interface problem and uses local eigenmodes to reduce the latter
\cite{GiammatteoHeinleinSchlottbom2025,GiammatteoHeinleinLedererSchlottbom2025}.
Related operator-adapted constructions have also been used to build
coarse spaces for two-level domain decomposition methods, including
local spectral spaces and LOD spaces
\cite{MaAlberScheichlZhang2025,LuXuZhengZou2025}.

These developments identify local approximation and stable global coupling
as complementary ingredients of Helmholtz reduction. Their analyses depend
on the particular local solution spaces, localization procedures, and
variational formulations employed. In a snapshot--spectral method, the
local eigenproblem controls the modes discarded from each snapshot space;
the reduced Helmholtz problem couples the retained modes through an
indefinite sesquilinear form. An analysis of this construction must therefore
relate the local spectral estimates to the inf--sup condition of that form
on the assembled space. Oversampling adds a further step to this argument,
since the spectral approximation is constructed on an enlarged region and
then restricted to its target coarse element. Establishing this connection
provides a mathematical basis for choosing local spectral dimensions and
oversampling regions in the reduced model.

The global coupling of the local spaces is an essential part of this
question. Conforming multiscale methods commonly multiply local functions
by a partition of unity \cite{FuGao2017,FuLiCrasterGuenneau2021}; the
resulting overlap influences both the approximation estimates and the
connectivity of the reduced matrix. Mixed formulations offer a different
structure. Their scalar variable is discontinuous, whereas the flux belongs
to an $H(\mathrm{div})$-conforming space, allowing normal-flux continuity and
local conservation. Mixed multiscale methods have been developed for
heterogeneous elliptic problems
\cite{ChenHou2003,ChungEfendievLee2015,AlberMaScheichl2025}, and
hybrid-mixed formulations have been analyzed for heterogeneous Helmholtz
equations \cite{ChaumontFreletValentin2020}.
With suitable quadrature, the flux variables can be eliminated locally,
leaving only the scalar wave field in the global system and allowing local
flux recovery \cite{WheelerYotov2006}. This velocity-eliminated structure
has been incorporated into GMsFEM for heterogeneous flow
\cite{ChenChungHeSun2020}. For Helmholtz model reduction, it permits local
wave spaces to be coupled through a scalar formulation without extending
their supports by a partition of unity.

In this paper, we develop and analyze a velocity-eliminated mixed GMsFEM
for the Helmholtz equation. Starting from a mass-lumped lowest-order
Raviart--Thomas discretization, we generate snapshots by solving local
Helmholtz problems with mixed Dirichlet--impedance boundary conditions on
oversampled regions. Varying the Dirichlet excitation along the local
boundary samples wave fields entering the region, while impedance
conditions provide an absorbing closure on the remaining boundary.
Each snapshot therefore incorporates the oscillatory behavior and
medium-dependent wave interactions through the governing equation.
This gives the local approximation space its physics-driven character:
the subsequent spectral reduction selects modes from these local wave
responses. The selected modes are then restricted to individual coarse elements. The
regions used to compute the basis functions thus overlap, but their
supports in the reduced model do not. Consequently, oversampling enriches
the information available to each local space while the reduced matrix
retains coupling only between neighboring coarse elements. This
distinction is important for reducing both the dimension and the
factorization cost of the global system.

This boundary-driven construction also separates local wave propagation
from the response to the volume source. Local particular solutions
account for the forcing, and the total approximation
is the sum of this source correction and the field determined by the
reduced Galerkin problem. The correction enters both the reduced
right-hand side and the reconstruction, leaving the reduced matrix
unchanged. This decomposition gives the method its source-independent
approximation space and also underlies the stability and error analysis.

The main analytical contribution is a stability and convergence analysis
that relates this local spectral construction to the global indefinite
problem. We first establish an inf--sup bound for the velocity-eliminated
fine-grid formulation using continuous adjoint regularity and discrete
consistency estimates. We then transfer stability to the multiscale space
through an adjoint approximation argument. On each oversampled region, the
fine-grid adjoint solution is decomposed into a source-driven component and
a locally homogeneous component represented by the snapshots. Spectral
approximation of the latter, together with an estimate of the former,
gives a bound on the adjoint approximation error. This bound yields an
explicit criterion for the multiscale inf--sup condition in terms of
fine-grid resolution, oversampling, and the neglected local eigenmodes.
The local spectra therefore enter not only the approximation estimate but
also the stability of the reduced Galerkin formulation.

With this stability result, Galerkin orthogonality gives quasi-optimality
of the total, source-corrected approximation in the discrete
energy norm: its error is bounded by the best approximation available in
the corrected multiscale space, multiplied by a stability factor.
A corresponding decomposition of the fine-grid solution
reduces the best-approximation estimate to local spectral truncation on
the oversampled regions. Combining the two arguments yields an a priori
error estimate that quantifies the dependence on the coarse scale $H$,
fine-grid spacing $h$, oversampling depth $m$, first omitted local
eigenvalues, and wavenumber $k$. In this way, the analysis links the
selection of local modes to stability and error control of the reduced
solution.

The resulting method has a natural offline--online structure. At fixed
medium and frequency, the snapshots, selected modes, and reduced
factorization are constructed once; each new source requires local
particular solutions and a solve with the same reduced operator.
The independent local constructions can be carried out in parallel,
whereas the globally coupled computation uses only the retained spectral
coefficients. This makes the reduction of global dimension and sparse
coarse coupling directly relevant to repeated-source wave simulation.
We assess this reduction in two- and three-dimensional
experiments with homogeneous, Marmousi, and Overthrust media, examining
the effects of oversampling and local spectral dimension on accuracy
alongside the cost of the global solve. Polynomial boundary traces further
reduce the snapshot construction cost in three dimensions. The experiments
include impedance and global PML truncations and show that accurate
fine-grid wavefield approximation can be obtained with substantially
smaller global systems and lower solution costs.

The remainder of the paper is organized as follows. The fine-grid formulation is presented in \Cref{sec-2}, followed by the
local space construction in \Cref{sec-3}. Stability and approximation
estimates are proved in \Cref{sec-4,sec-5}, and \Cref{sec-6} reports the
numerical experiments.



	\section{A mixed finite element method for the Helmholtz equation}\label{sec-2}In this section we state the problem under consideration and present a mixed finite element method with velocity elimination. The key idea is to apply a suitable quadrature rule to the flux mass term, which permits local elimination of the discrete flux variable. As a result, the global system involves only the scalar wave field, and the flux can be recovered locally once
the scalar solution has been obtained. This velocity-elimination strategy is motivated by the mixed generalized multiscale finite element framework originally developed for single-phase flow problems in \cite{ChenChungHeSun2020,HeChungChenChen2021}. 
\subsection{The continuous problem}
	Let $\Omega\subset \mathbb{R}^2$ be a bounded convex polygonal domain with a Lipschitz continuous boundary $\partial\Omega$. We consider the Helmholtz equation as
\begin{subequations}
\label{Hel_imp}
    \begin{align}
\label{Hel}
    -\Delta u-k^{2}u&=f\quad\text{ in }\Omega,\\
\label{imp}
    \frac{\partial u}{\partial \Bn}-ik u&=g\quad\text{ on }\partial\Omega.
\end{align}
\end{subequations}
where $k>0$ is a constant wavenumber, $i = \sqrt{-1}$ denotes the imaginary unit, and $\Bn$ is the unit outward normal vector on $\partial\Omega$.

By introducing  $\Bq:=-\nabla u$ the auxiliary flux variable,  the second-order problem \eqref{Hel_imp} can be reformulated in an equivalent mixed form, i.e.,
\begin{subequations}
\label{mix_hel_imp}
    \begin{align}
\label{mix_Hel}
    \Bq+\nabla u&=0 \quad\text{ in }\Omega,\\
    \nabla\cdot\Bq-k^2u&=f \quad\text{ in }\Omega,\\
    \label{mix_imp}
    -\Bq\cdot\Bn-ik u&=g \quad\text{ on }\partial\Omega.
\end{align}
\end{subequations}
Let
\[
\BV := \BH(\mathrm{div},\Omega;\mathbb{C})\ \ \mbox{and}\ \ 
W := L^2(\Omega;\mathbb{C})
\]
be the standard Sobolev spaces (see
\cite{BrezziFortin1991,GiraultRaviart1986}).
For a subdomain $D\subset\Omega$, $(\cdot,\cdot)_D$ denotes the
$L^2(D)$ inner product, linear in its first argument, with the Euclidean
dot product used for vector fields. Similarly,
$\langle a,b\rangle_{\partial D}=\int_{\partial D}a\overline b\,ds$
is defined for $a,b\in L^2(\partial D)$.
Throughout, the real part, imaginary part, and complex conjugate of a
complex number $z$ are denoted by $\Re z$, $\Im z$, and $\overline z$,
respectively.

Now, the variational form of \eqref{mix_hel_imp} is stated as follows: find \((\Bq,u)\in \BV\times W\) such that
\begin{subequations}
\label{dis_mix}
    \begin{align}
\label{dis_mix_Hel}
    (\Bq,\Bv)_\Omega-(u,\nabla\cdot \Bv)_\Omega+\langle \frac{i}{k}\Bq\cdot \Bn,\Bv\cdot \Bn\rangle_{\partial\Omega}
    &=-\langle \frac{i}{k}g,\Bv\cdot \Bn\rangle_{\partial\Omega}, \qquad &\forall\Bv \in \BV,\\
    \label{dis_mix_imp}
    -(\nabla\cdot\Bq,p)_\Omega+(k^2u,p)_\Omega&=-(f,p)_\Omega, \qquad&\forall p\in W.
\end{align}
\end{subequations}

\subsection{A mixed finite element method with velocity elimination}
For clarity of exposition, we restrict the discussion to the two-dimensional case, and the three-dimensional case follows analogously. We also restrict our discussion to shape-regular square meshes, and the arguments and results presented in this paper remain valid for shape-regular rectangular meshes.

Let $\mathcal{T}_h$ be a shape-regular Cartesian partition of
$\Omega$ into square elements.
The set of all edges is denoted by $\mathcal{E}_h$, with
$\mathcal{E}_h^0$ and $\mathcal{E}_h^B$ denoting, respectively, the
sets of interior edges and edges lying on
$\partial\Omega$. 

We recall the standard \(RT_0-P_0\) mixed finite element spaces on $\mathcal{T}_h$ here and denote them by
\[
\BV_h
=
\left\{
\Bv\in \BV:\ \Bv|_t\in RT_0(t),\ \forall t\in\mathcal{T}_h
\right\}
\quad\text{and}\quad
W_h
=
\left\{
q\in W:\ q|_t\in P_0,\ \forall t\in\mathcal{T}_h
\right\},
\]
where \(P_0\) denotes the polynomial space of zero degree. For each edge $e\in\mathcal E_h$, let $\Bv_e\in\BV_h$
denote the Raviart--Thomas basis function associated with $e$ (see Figure \ref{two_grid}),
normalized by 
\begin{align*}
    \int_{e'} \Bv_e\cdot\Bn_{e'}\,ds
=
\delta_{ee'},
\quad
e'\in\mathcal E_h .
\end{align*}
The family
$\{\Bv_e\}_{e\in\mathcal E_h}$ forms a basis of $\BV_h$.
Consequently, for any $\Bw,\Bv\in\BV_h$, there exist unique
coefficients $\{w_e\}_{e\in\mathcal E_h}$ and
$\{v_e\}_{e\in\mathcal E_h}$ such that
\begin{align*}
    \Bw
=
\sum_{e\in\mathcal E_h} w_e\Bv_e\ \ \mbox{and}\ \ 
\Bv
=
\sum_{e\in\mathcal E_h} v_e\Bv_e .
\end{align*}

We also introduce the standard jump operator $\llbracket \cdot \rrbracket_e$ across the edges. For any $u_h\in W_h$, (1) if $e\in\mathcal{E}_h^0$ is an interior edge shared by two elements $t_1$ and $t_2$ (see Figure \ref{two_grid}), then, for an arbitrary but fixed
ordering of $t_1$ and $t_2$, with $\Bn_e$ pointing from $t_1$ to $t_2$, $\llbracket u_h \rrbracket_e
    =
    u_h|_{t_1}-u_h|_{t_2}$; 
(2) if $e\in\mathcal{E}_h^B$ is a boundary edge and $t$ is the unique
element adjacent to $e$, with $e\subset\partial t$, $\llbracket u_h \rrbracket_e = u_h|_t$. 

\begin{figure}[H]
\centering
\includegraphics[width=0.5\textwidth]{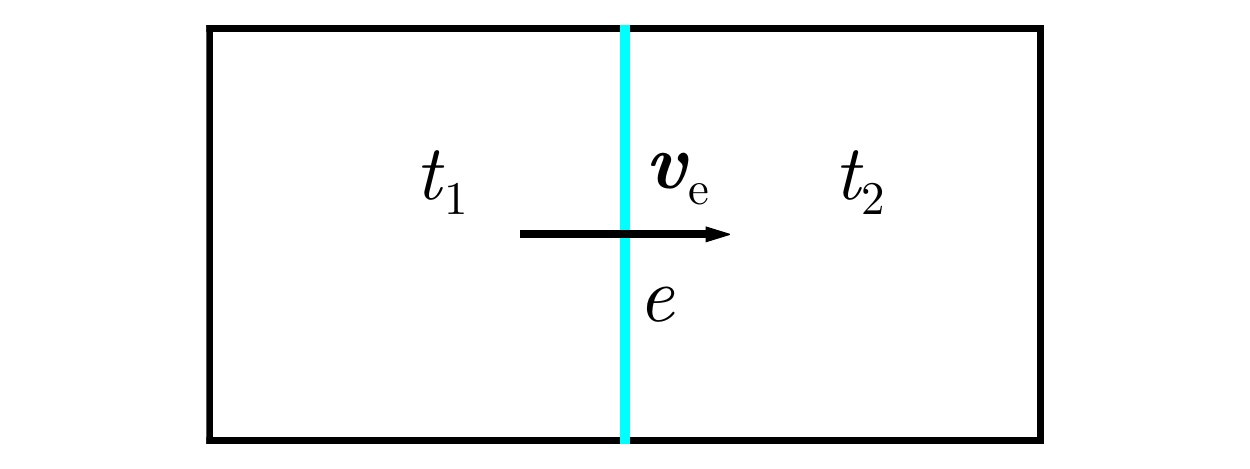}
\caption{\small Two elements $t_1$ and $t_2$ in $\mathcal{T}_h$ sharing an edge $e$ in $\mathcal{E}_h$, $\Bv_e$ denotes the basis function for velocity associated with $e$.}
\label{two_grid}
\end{figure}

The mixed finite element discretization of
~\eqref{mix_hel_imp}
is to find \((\Bq_h,u_h)\in\BV_h\times W_h\) such that
    \begin{align*}
(\Bq_h,\Bv_h)_{\Omega}
-(u_h,\nabla\cdot\Bv_h)_\Omega
+\langle
\frac{i}{k}\Bq_h\cdot\Bn,
\Bv_h\cdot\Bn
\rangle_{\partial\Omega}
&=
-\langle
\frac{i}{k}g,
\Bv_h\cdot\Bn
\rangle_{\partial\Omega},
\\
-(\nabla\cdot\Bq_h,p_h)_\Omega
+(k^2u_h,p_h)_\Omega
&=
-(f,p_h)_\Omega ,
\end{align*}
for all
\(\Bv_h\in\BV_h\) and \(p_h\in W_h\). 
In the implementation, the
inner product \((\Bq_h,\Bv_h)_\Omega\) is usually approximated by
some quadrature rules \cite{HeChungChenChen2021,RussellWheeler1983,KlausenWinther2006,WheelerYotov2006,HouSunChen2014}. The resulting formulation reads
\begin{subequations}
\label{weak_form_t}
    \begin{align}
\label{Gms_1}
(\Bq_h,\Bv_h)_{Q,\Omega}
-(u_h,\nabla\cdot\Bv_h)_\Omega
+\langle
\frac{i}{k}\Bq_h\cdot\Bn,
\Bv_h\cdot\Bn
\rangle_{\partial\Omega}
&=
-\langle
\frac{i}{k}g,
\Bv_h\cdot\Bn
\rangle_{\partial\Omega},
\\
\label{Gms_2}
-(\nabla\cdot\Bq_h,p_h)_\Omega
+(k^2u_h,p_h)_\Omega
&=
-(f,p_h)_\Omega ,
\end{align}
\end{subequations}
where \((\Bq_h,\Bv_h)_{Q,\Omega}\) denotes the quadrature approximation
of \((\Bq_h,\Bv_h)_\Omega\). 
Note that applying proper quadrature rules to \eqref{weak_form_t}
allows the flux variable $\Bq_h$ to be eliminated. This elimination is
described in detail below.
Following \cite{HeChungChenChen2021}, we use the elementwise
trapezoidal quadrature rule 
$$(\Bw,\Bv)_{Q,t}
=
\frac{1}{2}\sum_{e\in\partial t}
w_e \overline{v_e},\quad \forall\ t\in \mathcal{T}_h.
$$ 
The corresponding global quadrature form is then defined by
\begin{align}
(\Bw,\Bv)_{Q,\Omega}
=
\sum_{t\in\mathcal T_h}(\Bw,\Bv)_{Q,t}
=
\frac{1}{2}
\sum_{t\in\mathcal T_h}
\sum_{e\in\partial t}
w_e\overline{v_e}.
\label{tra}
\end{align}

\begin{remark}
According to the results in \cite{WheelerYotov2006}, the norm $\|\cdot\|_{Q,\Omega}$ induced by \eqref{tra} is equivalent to the $L^2$-norm $\|\cdot\|_{0,\Omega}$ on the space $\BV_h$.
\end{remark}

For the discrete flux $\Bq_h\in\BV_h$, we write 
$$\Bq_h
=
\sum_{e\in\mathcal E_h} q_e \Bv_e,$$ 
where $q_e$ denotes the coefficient of $\Bq_h$ associated with the edge $e$.

For an interior edge $e\in\mathcal{E}_h^0$, setting $\Bv_h=\Bv_e$ in \eqref{Gms_1}, the first term on the
left-hand side of \eqref{Gms_1} becomes
\begin{align}
\label{w5.1}
(\Bq_h,\Bv_e)_{Q,\Omega}
=
(\Bq_h,\Bv_e)_{Q,t_1}
+
(\Bq_h,\Bv_e)_{Q,t_2}
=
\frac{1}{2}q_e
+
\frac{1}{2}q_e
=
q_e,
\end{align}
 and the second term in the right-hand side of \eqref{Gms_1} turns into
\begin{align}
(u_h,\nabla\cdot \Bv_e)_\Omega
&=
(u_h,\nabla\cdot \Bv_e)_{t_1}
+
(u_h,\nabla\cdot \Bv_e)_{t_2}
\nonumber\\
&=
\langle u_h|_{t_1},\Bv_e\cdot \mathbf{n}_e\rangle_e
-
\langle u_h|_{t_2},\Bv_e\cdot \mathbf{n}_e\rangle_e
\label{w5.2}\\
&=
u_h|_{t_1}-u_h|_{t_2} = \llbracket u_h \rrbracket_e\nonumber.
\end{align} 
    
Combining \eqref{w5.1} and \eqref{w5.2}, we obtain
\begin{align}
\label{q_u_jump}
    q_e=\llbracket u_h\rrbracket_e,
\quad
\forall e\in \mathcal{E}_h^0 .
\end{align}

For a boundary edge $e\in\mathcal{E}_h^B$, we take $\Bv_h=\Bv_e$ in
\eqref{Gms_1}. Then
\begin{align*}
\frac{1}{2}q_e - u_h|_t
+ \frac{i}{k}q_e
\langle \Bv_e\cdot\Bn,\Bv_e\cdot\Bn\rangle_e +\frac{i}{k}
\langle g,\Bv_e\cdot\Bn\rangle_e
=0,
\end{align*}
where $t$ is the unique element such that $e\subset\partial t$.
A simple calculation yields
$$
q_e
= \frac{2k h}{k h+2i}u_h|_t -\frac{2hi}{k h+2i}g_e^{\rm av}
= \frac{2k h}{k h+2i}\llbracket u_h \rrbracket_e -\frac{2hi}{k h+2i}g_e^{\rm av},
$$
where $g_e^{\rm av}:=\frac{1}{|e|}\int_e g\,ds$ 
denotes the average of $g$ over the edge $e$. 
For later use, we introduce the coefficients
\begin{align*}
\alpha:=\frac{2k h}{k h+2i},
\qquad
\beta:=\frac{2hi}{k h+2i}, 
\end{align*}
which implies that
\begin{align}
\label{q_u_bjump}
q_e
=
\alpha \llbracket u_h\rrbracket_e
-
\beta g_e^{\rm av},\quad \forall e\in \mathcal{E}_h^B.
\end{align}

Therefore, by \eqref{q_u_jump} and \eqref{q_u_bjump}, we have
$$
\Bq_h
= \sum_{e\in\mathcal E_h} q_e \Bv_e
= \sum_{e\in\mathcal E_h^0} \llbracket u_h\rrbracket_e \Bv_e + \sum_{e\in\mathcal E_h^B} (\alpha \llbracket u_h\rrbracket_e
- \beta g_e^{\rm av}) \Bv_e.
$$
Hence, the flux variable $\Bq_h$ can be eliminated locally in \eqref{weak_form_t} and recovered locally once the scalar solution $u_h$ has been computed.

In order to compute the scalar solution $u_h$, we derive the reduced problem only involving the scalar variable $u_h$ now.  

For any edge $e\in \mathcal{E}_h$ and element $t\in \mathcal{T}_h$ satisfying $e\subset \partial t$, we define a scalar $S_{et}=\Bv_e\cdot\Bn_{et}|e|$, where $\Bn_{et}$ is the unit normal vector on $e$ pointing outward of $t$. For example, under the situation shown in Figure~\ref{two_grid} , $S_{et_1}=1$ and $S_{et_2}=-1$. 

By the relationship of $q_e$ and $u_h$ as shown in \eqref{q_u_jump} and \eqref{q_u_bjump}, the first term in the left-hand side of ~\eqref{Gms_2} becomes
\begin{align*}
-(\nabla\cdot \Bq_h,p_h)_\Omega
&=
-\sum_{t\in \mathcal{T}_h}(\nabla\cdot \Bq_h,p_h)_t
=
-\sum_{t\in \mathcal{T}_h}\sum_{e\in \partial t}q_e \overline{p_h}|_t S_{et}
=
-\sum_{e\in \mathcal E_h} q_e\overline{\llbracket p_h \rrbracket_e}
\nonumber\\
&=
-\sum_{e\in \mathcal{E}_h^0}\llbracket u_h \rrbracket_e \,\overline{\llbracket p_h \rrbracket_e}
-\frac{2k h}{k h+2i}\sum_{e\in \mathcal{E}_h^{B}} \llbracket u_h \rrbracket_e \,\overline{\llbracket p_h \rrbracket_e}
+\frac{2i}{k h+2i}\langle g,p_h\rangle_{\partial\Omega}.
\end{align*}
Thus, we obtain the sesquilinear form $a_h(\cdot,\cdot)$ as
\begin{align*}
a_h(r,q)
= \sum_{e\in \mathcal{E}_h^0}\llbracket r \rrbracket_e \,\overline{\llbracket q \rrbracket_e}
 + \frac{2k h}{k h+2i}\sum_{e\in \mathcal{E}_h^{B}} \llbracket r \rrbracket_e \,\overline{\llbracket q \rrbracket_e}-(k^2 r,q)_\Omega,
\quad
\forall r,q\in W_h,
\end{align*}
Consequently, the original mixed discrete formulation \eqref{weak_form_t}
can be reduced to the following problem involving only the pressure variable $u_h$:
Find $u_h\in W_h$ such that
\begin{align}
\label{bf}
a_h(u_h,p_h)
= (f,p_h)_\Omega + \frac{2i}{k h+2i}\langle g,p_h\rangle_{\partial\Omega},
\qquad
\forall p_h\in W_h.
\end{align}

\begin{remark}
The well-posedness of \eqref{bf} is analyzed in Section~\ref{sec-4}.
\end{remark}

	\section{The multiscale space and the multiscale approximation}
	\label{sec-3}
	In this section, we construct the multiscale space $W_{\rm ms}$ and the approximation $u_f+u_{\rm ms}$ of $u_h$, where $u_{\rm ms}\in W_{\rm ms}$. We first introduce the local snapshot spaces, which are constructed from a sufficiently rich collection of basis functions obtained by solving local Helmholtz problems subject to mixed boundary conditions. The snapshot spaces are then reduced by solving suitable local spectral problems, from which the dominant modes are selected. These spectral problems are designed so that the dominant modes capture the essential local features of the solution and provide an efficient reduction of the snapshot spaces. The linear span of the selected modes defines the multiscale space $W_{\rm ms}$.

To define the multiscale space, we need to introduce a proper coarse-grid $\mathcal{T}_H$ with respect to the fine-grid $\mathcal{T}_h$. We construct the coarse mesh $\mathcal{T}_H$ by agglomerating fine-grid
elements here. More precisely, each coarse element
$T_i\in\mathcal{T}_H$ is a connected union of elements of
$\mathcal{T}_h$, i.e., $T_i=\bigcup_{k=1}^{N_i} t_k,
\ \forall t_k\in\mathcal{T}_h,$ 
where $N_i$ denotes the number of fine-grid elements contained in
$T_i$. In the simplest setting, $\mathcal{T}_H$ is generated by a
uniform coarsening of $\mathcal{T}_h$, so that the coarse elements are
also rectangular. For each coarse element $T_i$, we denote by $T_i^m$ its $m$-layer
oversampling region, where $m\geq0$ and $T_i^0=T_i$.
The region $T_i^m$ is constructed recursively by enlarging $T_i$ with
$m$ layers of neighboring fine-grid elements.
Figure~\ref{two_grid_pdf} illustrates a multiscale mesh containing two
neighboring coarse elements $T_i$ and $T_j$, together with the
oversampling region $T_i^m$ for $m=2$.
The common interface between $T_i$ and $T_j$ is denoted by $E_{i,j}$,
while an interior fine-grid edge shared by two neighboring fine
elements is denoted by $e$. For an oversampling region $T_i^m$, let
$\mathcal{E}_{T_i^m}$ denote the set of all fine-grid edges contained
in $\overline{T_i^m}$.
We further denote by $\mathcal{E}_{T_i^m}^0$ the set of interior
fine-grid edges of $T_i^m$ and by $\mathcal{E}_{T_i^m}^B$ the set of
fine-grid edges lying on $\partial T_i^m$.
Finally, let $N_t$ and $N_T$ denote the total numbers of elements in
$\mathcal{T}_h$ and $\mathcal{T}_H$, respectively.

\begin{figure}[H]
\centering
\includegraphics[width=0.5\textwidth]{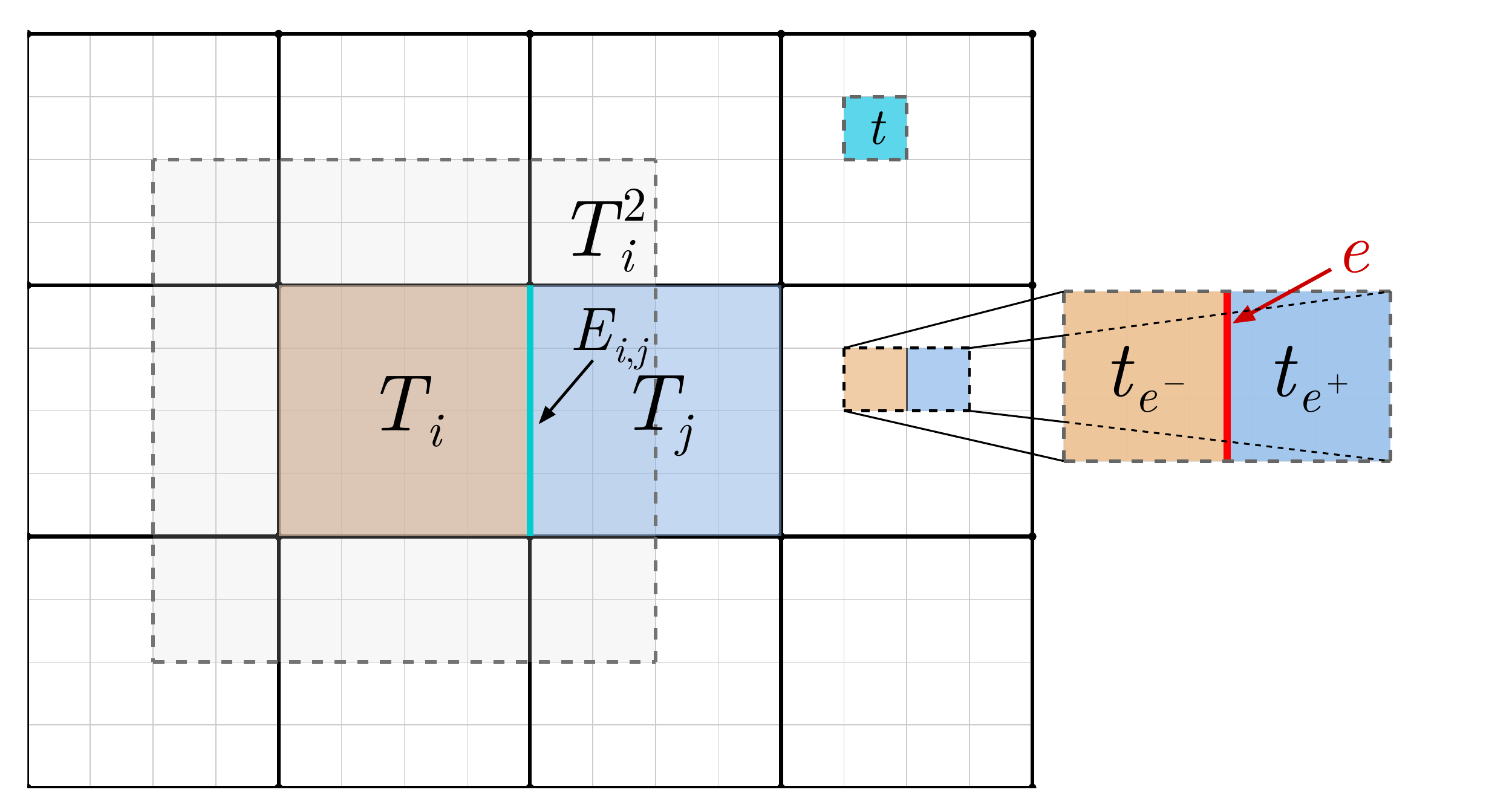}
\caption{\small 
An illustration of the two-scale mesh, including a fine element $t$, 
coarse elements $T_i$ and $T_j$, and the oversampling region $T_i^m$ with $m=2$. 
The interface between coarse elements is denoted by $E_{i,j}$, 
while $e$ represents a fine-scale edge shared by two neighboring fine elements 
$t_{e^-}$ and $t_{e^+}$.}
\label{two_grid_pdf}
\end{figure}


\subsection{ The snapshot space}

Let $T_i\in\mathcal T_H$ be a coarse element and let $T_i^m$ denote its
associated oversampling region. We construct snapshots from fine-grid
boundary excitations. For each fine-grid
edge $e_i^j\subset\partial T_i^m$, consider the local Helmholtz problem
\begin{subequations}
\label{local_snap1}
\begin{align}
\boldsymbol{\Phi}_i^j+\nabla\psi_i^j
&=0
&&\text{in }T_i^m,\\
-k^2\psi_i^j+\nabla\cdot\boldsymbol{\Phi}_i^j
&=0
&&\text{in }T_i^m,\\
\psi_i^j
&=1
&&\text{on }e_i^j,\\
\label{local2}
\frac{\partial\psi_i^j}{\partial n}
-ik\psi_i^j
&=0
&&\text{on }\partial T_i^m\setminus e_i^j.
\end{align}
\end{subequations}

We discretize \eqref{local_snap1}
on the fine grid using the velocity-eliminated mixed finite element method.
The resulting discrete solution is denoted by
$(\boldsymbol{\Phi}_i^j,\psi_i^j)
\in\mathbf V_h(T_i^m)\times W_h(T_i^m)$,
where $\mathbf V_h(T_i^m)$ and $W_h(T_i^m)$ denote the restrictions of
$\mathbf V_h$ and $W_h$, respectively, to $T_i^m$.
The following result establishes that each prescribed boundary excitation
determines a unique snapshot function and the proof is given in Appendix~\ref{proof_A}.
\begin{lemma}
\label{well_posed_of_snap}
For any edgewise constant
$\zeta$ on $e_i^j$, the discrete problem of finding
$(\boldsymbol\Phi_{\zeta,h},\psi_{\zeta,h})
\in
\boldsymbol V_h(T_i^m)\times W_h(T_i^m)$ such that
\begin{align*}
(\boldsymbol\Phi_{\zeta,h},\boldsymbol v_h)_{Q,T_i^m}
-(\psi_{\zeta,h},\nabla\!\cdot\boldsymbol v_h)_{T_i^m}
+\langle\zeta,\boldsymbol v_h\cdot\boldsymbol n\rangle_{e_i^j}
+\frac{i}{k}
\langle
\boldsymbol\Phi_{\zeta,h}\cdot\boldsymbol n,
\boldsymbol v_h\cdot\boldsymbol n
\rangle_{\partial T_i^m\setminus e_i^j}
&=0,\\
(\nabla\!\cdot\boldsymbol\Phi_{\zeta,h},p_h)_{T_i^m}
-(k^2\psi_{\zeta,h},p_h)_{T_i^m}
&=0,
\end{align*}
for all
$(\boldsymbol v_h,p_h)
\in
\boldsymbol V_h(T_i^m)\times W_h(T_i^m)$
admits a unique solution.
\end{lemma}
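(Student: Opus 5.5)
Since the problem is a square linear system in finite dimensions, existence follows from uniqueness. It therefore suffices to show that the homogeneous problem with $\zeta=0$ has only the trivial solution. So let $(\boldsymbol\Phi,\psi)\in\boldsymbol V_h(T_i^m)\times W_h(T_i^m)$ satisfy both equations with $\zeta=0$.

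The first step is to eliminate the velocity locally, exactly as in Section~\ref{sec-2}. Testing the first equation with $\boldsymbol v_e$ for each fine edge $e$ gives a relation between the edge coefficients and the jumps of $\psi$. On interior edges $e\in\mathcal E^0_{T_i^m}$ we obtain $\Phi_e=\llbracket\psi\rrbracket_e$, as in \eqref{q_u_jump}. On boundary edges $e\subset\partial T_i^m\setminus e_i^j$ we obtain $\Phi_e=\alpha\llbracket\psi\rrbracket_e$, as in \eqref{q_u_bjump} with $g=0$. On the Dirichlet edge $e_i^j$ there is no impedance term, so $\tfrac12\Phi_e-\psi|_t=0$ and hence $\Phi_e=2\llbracket\psi\rrbracket_e$.

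The second step substitutes these relations into the second equation, with $p=\psi$, using the identity $(\nabla\cdot\boldsymbol\Phi,p)=\sum_e\Phi_e\overline{\llbracket p\rrbracket_e}$. This yields
\begin{align*}
0=\sum_{e\in\mathcal E^0_{T_i^m}}|\llbracket\psi\rrbracket_e|^2+2|\llbracket\psi\rrbracket_{e_i^j}|^2+\alpha\sum_{e\subset\partial T_i^m\setminus e_i^j}|\llbracket\psi\rrbracket_e|^2-k^2\|\psi\|_{0,T_i^m}^2 .
\end{align*}

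The third step takes the imaginary part. We have $\Im\alpha=\Im\frac{2kh(kh-2i)}{k^2h^2+4}=-\frac{4kh}{k^2h^2+4}<0$, and every other term is real. Hence $\psi|_t=0$ on every fine element $t$ adjacent to $\partial T_i^m\setminus e_i^j$.

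The final step propagates this zero to the whole region, which is the main obstacle because the real part is indefinite and gives no control. Instead we use the elementwise equations: testing the second equation with the indicator of a single element $t$ gives a discrete Helmholtz stencil
\begin{align*}
\sum_{e\subset\partial t} c_e\bigl(\psi|_t-\psi|_{t'_e}\bigr)-k^2|t|\,\psi|_t=0 ,
\end{align*}
where $c_e=1$ on interior edges, $c_e$ equals $\alpha$ or $2$ on boundary edges, and the boundary "neighbour" value is zero.

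We also use the boundary information more fully. On an impedance edge, $\psi|_t=0$ together with $\Phi_e=\alpha\psi|_t$ gives $\Phi_e=0$ there.

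Now sweep row by row inward from a side of $\partial T_i^m\setminus e_i^j$. Since $T_i^m$ is a rectangle of squares, take a boundary element $t$ whose values are known to vanish. Its stencil involves only its single unknown interior neighbour across the opposite edge, so that neighbour must vanish too. Repeating this layer by layer, as in a unique-continuation or Cauchy-problem argument on the grid, gives $\psi\equiv0$. An entire side of the rectangle consists of impedance edges, since $e_i^j$ is only one edge, so the sweep can start from that side.

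Care is needed with the corner elements, which have two boundary edges, and with the element touching $e_i^j$. Because the zero data covers a full side, each new row is determined column by column from the previous two rows. This sweep is the delicate step, and it should be checked carefully for the stencil ordering.

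Once $\psi\equiv0$, the relations from the first step give $\boldsymbol\Phi=0$, which proves uniqueness and hence well-posedness.
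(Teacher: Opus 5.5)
Your proof is correct and follows essentially the same route as the paper: the imaginary part of the energy identity kills the impedance-boundary data, a row-by-row sweep from the side opposite $e_i^j$ propagates $\psi=0$ through $T_i^m$, and existence follows because an injective square system is surjective. Working with the velocity-eliminated five-point stencil instead of the paper's alternating edge and cell-conservation equations is only a repackaging of the same argument, and the sweep you flag as delicate goes through exactly as you describe.
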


Let $ \mathcal{E}_{T_i^m}^B
    =
    \left\{e_i^j:\ 1\leq j\leq J_i\right\},$ where $J_i$ denotes the number of fine-grid edges on $\partial T_i^m$.
The full local snapshot space over the oversampling region $T_i^m$ is
defined by $W_{i,m}^{\mathrm{snap}}
    =
    \operatorname{span}
    \left\{
        \psi_i^j:\ 1\leq j\leq J_i
    \right\}.$ To construct snapshot functions associated with the target coarse element
$T_i$, we restrict each oversampled snapshot to $T_i$ and extend the
restriction by zero to the rest of $\Omega$. More precisely, we define
\begin{align*}
\psi_i^{j,\mathrm{snap}}(x)
=
\begin{cases}
    \psi_i^j(x), & x\in T_i,\\
    0, & x\in\Omega\setminus T_i,
\end{cases}
\qquad
1\leq j\leq J_i.
\end{align*}
The corresponding local snapshot space associated with $T_i$ is then
defined by
\begin{align*}
    W_i^{\mathrm{snap}}
    =
    \operatorname{span}
    \left\{
        \psi_i^{j,\mathrm{snap}}:\ 1\leq j\leq J_i
    \right\}.
\end{align*}
Consequently, the global snapshot space is
\begin{align*}
    W_{\mathrm{snap}}
    =
    \bigoplus_{i=1}^{N_T} W_i^{\mathrm{snap}}
    =
    \operatorname{span}
    \left\{
        \psi_i^{j,\mathrm{snap}}:
        1\leq j\leq J_i,\;
        1\leq i\leq N_T
    \right\}.
\end{align*}
The above construction uses one local solve for each fine-grid boundary
degree of freedom and hence provides a rich snapshot space. 
\begin{remark}
Polynomial boundary traces reduce the number of local problems by
exciting an entire coarse face at a time. For an edge \((d=2)\) or a face \((d=3)\)
\(F\subset\partial T_i^m\), let
\(\{q_{F,r}\}_{r=1}^{(p+1)^{d-1}}\) be a basis of the tensor-product
polynomial trace space \(\mathbb Q_p(F)\).  For each trace \(q_{F,r}\), we compute
\((\boldsymbol\Phi_{i,F}^r,\psi_{i,F}^r)\) from
\begin{subequations}\label{polynomial-snapshot}
\begin{align}
\boldsymbol\Phi_{i,F}^r+\nabla\psi_{i,F}^r&=0
&&\text{in }T_i^m,\\
-k^2\psi_{i,F}^r+\nabla\!\cdot\boldsymbol\Phi_{i,F}^r&=0
&&\text{in }T_i^m,\\
\psi_{i,F}^r&=q_{F,r}
&&\text{on }F,\\
\frac{\partial\psi_{i,F}^r}{\partial n}-ik\psi_{i,F}^r&=0
&&\text{on }\partial T_i^m\setminus F.
\end{align}
\end{subequations}
The resulting alternative snapshot space is
\[
 W_{i,m}^{\mathrm{snap},p}
 :=\operatorname{span}\{\psi_{i,F}^r:
 F\subset\partial T_i^m,\ 1\le r\le(p+1)^{d-1}\}.
\]

Thus, the two constructions differ in the choice of boundary trace
space: the fine-grid trace construction resolves the boundary data at the
fine-grid level, whereas $W_{i,m}^{\mathrm{snap},p}$ employs a
low-dimensional polynomial trace space. In particular, for a fixed
boundary entity $F$, all polynomial traces share the same local matrix
and can be treated as multiple right-hand sides. Consequently, the
polynomial construction substantially reduces the number of local
factorizations, especially in three dimensions, while the subsequent
spectral reduction can be applied in the same manner as for the
snapshot space.
\end{remark}

\begin{remark}
For a globally PML-truncated problem, the same construction is applied to
the stretched fine-grid operator. We first enlarge the oversampled region
by an auxiliary local PML, solve the boundary-excited problem there, discard
the auxiliary-layer degrees of freedom, and restrict the resulting snapshots
to the target coarse element. The auxiliary layer supplies the absorbing
boundary treatment for local construction, while the global PML remains
part of the fine and reduced operators.
\end{remark}
\subsection{The multiscale space}

We compress $W_{i,m}^{\mathrm{snap}}$ through a local spectral problem
to obtain a low-dimensional coarse space. More precisely, we seek an eigenpair
$(\lambda,u)\in\mathbb{R}\times W_{i,m}^{\mathrm{snap}}$, with $u\neq0$,
such that
\begin{align}
\label{spe_pro}
    a_i(u,v)
    =
    \lambda s_i(u,v),
    \qquad
    \forall v\in W_{i,m}^{\mathrm{snap}},
\end{align}
where
\begin{align*}
a_i(u,v)
&=
\sum_{e\in \mathcal{E}_{T_i^m}^0}
 h\llbracket u\rrbracket_e \overline{\llbracket v\rrbracket_e}+\sum_{e\in \mathcal{E}_{T_i^m}^B\setminus\mathcal{E}_h^B}k h^2
 \llbracket u\rrbracket_e \overline{\llbracket v\rrbracket_e}+\sum_{e\in \mathcal{E}_{T_i^m}^B\cap\mathcal{E}_h^B}h|\alpha|
 \llbracket u\rrbracket_e \overline{\llbracket v\rrbracket_e},\\
s_i(u,v)
&=
\frac{m+1}{H^2}
\left(
\sum_{e\in \mathcal{E}_{T_i^m}\setminus\mathcal{E}_h^B}
\llbracket u\rrbracket_e \overline{\llbracket v\rrbracket_e}+\sum_{e\in \mathcal{E}_{T_i^m}\cap\mathcal{E}_h^B}|\alpha|
\llbracket u\rrbracket_e \overline{\llbracket v\rrbracket_e}
+(k^2u,v)_{T_i^m}
\right).
\end{align*}
The spectral reduction for the alternative snapshot space
$W_{i,m}^{\mathrm{snap},p}$ is defined analogously, with
$W_{i,m}^{\mathrm{snap}}$ in \eqref{spe_pro} replaced by
$W_{i,m}^{\mathrm{snap},p}$.

Let $r_i=\dim W_{i,m}^{\mathrm{snap}}\leq J_i$, and choose a basis
$\{\chi_i^j\}_{j=1}^{r_i}$ of this space. We arrange the eigenvalues
of \eqref{spe_pro} in nondecreasing order, $\lambda_1^{(i)}
    \leq
    \lambda_2^{(i)}
    \leq
    \cdots
    \leq
    \lambda_{r_i}^{(i)}.$ Here, $\lambda_k^{(i)}$ denotes the $k$th eigenvalue associated with the
oversampling region $T_i^m$. In the adjoint and primal approximation estimates,
we write $\lambda_{j,1}^{(i)}$ and $\lambda_{j,2}^{(i)}$, respectively, for
this same eigenvalue $\lambda_j^{(i)}$. Thus $\lambda_{j,1}^{(i)}
=\lambda_{j,2}^{(i)}=\lambda_j^{(i)}$; when $i$ is fixed or appears under a
minimum over coarse elements, we suppress the superscript $(i)$.
Let $Z_k^{(i)}
    =
    \left(
        Z_{k,1}^{(i)},
        \ldots,
        Z_{k,r_i}^{(i)}
    \right)^{\top}$ be the corresponding eigenvector, where $Z_{kj}^{(i)}$ denotes its
$j$th component. Using the coefficients of $Z_k^{(i)}$, we construct
the corresponding spectral basis function by $ \tilde{\psi_i^k}
    =
    \sum_{j=1}^{r_i}
    Z_{k,j}^{(i)}
    \chi_i^{j},
    \quad
    1\leq k\leq r_i.$ We choose these eigenfunctions to be
$s_i$-orthonormal. Their restrictions give the representation $ W_i^{\mathrm{snap}}
    =
    \operatorname{span}
    \left\{
        \psi_i^{k,\mathrm{ms}}:
        1\leq k\leq r_i
    \right\},$
    where
\begin{align*}
\psi_i^{k,\mathrm{ms}}
=
\begin{cases}
    \tilde{\psi_i^k}(x), & x\in T_i,\\
    0, & x\in\Omega\setminus T_i,
\end{cases}
\qquad
1\leq k\leq r_i.
\end{align*}
    To obtain a reduced multiscale space, we retain the first $l_i\leq r_i$ eigenfunctions and define
    \begin{align*}
         W_i^{\mathrm{ms}}
    =
    \operatorname{span}
    \left\{
        \psi_i^{k,\mathrm{ms}}:
        1\leq k\leq l_i
    \right\}.
    \end{align*}
The global multiscale space is given by
\begin{align*}
    W_{\mathrm{ms}}
=
\bigoplus_{i=1}^{N_T} W_i^{\mathrm{ms}}
=
\operatorname{span}
\left\{
\psi_i^{k,\mathrm{ms}}:\ 1\leq k\leq l_i,\ 1\leq i\leq N_T
\right\}.
\end{align*}
When $l_i=r_i$, the local spectral tail vanishes; we use the convention
$\lambda_{r_i+1}^{(i)}=+\infty$ in the error estimates below.

\subsection{The multiscale approximation}
Note that in the previous definition of the multiscale space, we do not involve any information from the source term $f$. To account for the source term $f$ in \eqref{Hel_imp}, we introduce a
source corrector. For each coarse element $T_i\in\mathcal T_H$, we consider
the following local source problem: find
$(\boldsymbol\Phi_{f,i}^m,\psi_{f,i}^m)$ such that
\begin{subequations}\label{eq:local-source-corrector}
\begin{align}
\boldsymbol\Phi_{f,i}^m+\nabla\psi_{f,i}^m
&=0
&&\text{in }T_i^m,
\\
-k^2\psi_{f,i}^m
+\nabla\!\cdot\boldsymbol\Phi_{f,i}^m
&=f
&&\text{in }T_i^m,
\\
\frac{\partial\psi_{f,i}^m}{\partial n}
-ik\psi_{f,i}^m
&=0
&&\text{on }\partial T_i^m.
\end{align}
\end{subequations}

In the actual computation, the local problem
\eqref{eq:local-source-corrector} is discretized on the fine grid using
the velocity-eliminated mixed finite element method. More precisely, the
resulting discrete solution belongs to
$\boldsymbol V_h(T_i^m)\times W_h(T_i^m)$.
For notational simplicity, we suppress the subscript $h$ and continue to
denote the discrete solution by
$(\boldsymbol\Phi_{f,i}^m,\psi_{f,i}^m)$.

The global source corrector $u_f\in W_h$ is then assembled by restricting
each local oversampled corrector to its target coarse element:
\begin{align*}
    \left.u_f\right|_{T_i}
:=
\left.\psi_{f,i}^m\right|_{T_i},
\qquad
\forall T_i\in\mathcal T_H.
\end{align*}
With all the ingredients at hand, we define $u_{\rm ms}\in W_{\rm ms}$ so that $u_f+u_{\rm ms}$ approximates $u_h$:
\begin{align}
\label{GMs}
    a_h(u_{\rm ms},p_h)=(f,p_h)_\Omega-a_{h}(u_f,p_h)+\frac{2i}{k h+2i}\langle g,p_h\rangle_{\partial\Omega},\quad\forall p_h\in W_{\rm ms},
\end{align}
which is actually the velocity-eliminated mixed GMsFEM formulation of the Helmholtz equation.

\begin{remark}
The stability and well-posedness of \eqref{GMs} are analyzed in Section~\ref{sec-4}.
\end{remark}
\begin{remark}
   Oversampling plays an important role in the local multiscale
construction. Enlarging the computational region from $T_i$ to
$T_i^m$ allows the local basis functions to capture information from a
broader neighborhood and, at the same time, mitigates the effect of the
artificial boundary conditions on the approximation restricted to the
target coarse element $T_i$. The influence of the oversampling size is
examined numerically in \Cref{sec-6} through a comparison of the
relative errors obtained with different choices of $T_i^m$.

\end{remark}

    \section{Inf--sup and stability analyses}
    \label{sec-4}

    We establish the inf--sup condition of the multiscale method and the stability properties of the discrete solution in this section. To this end, we first introduce the related norms used in the analysis, and then recall the wave-number-explicit stability and regularity estimates for the continuous Helmholtz problem, which provide the key ingredients for the subsequent discrete analysis. Based on these estimates, we establish the inf--sup stability of the fine-grid discretization and characterize the stability of the discrete solution. Finally, we transfer this stability to the multiscale discretization and prove a corresponding inf--sup condition on the multiscale spaces under suitable resolution and approximation assumptions. These results guarantee the well-posedness of the discrete problems and provide the foundation for the error analysis developed in the subsequent section.


Throughout the analysis, $k\geq k_0>0$, with $k_0$ fixed.
We begin by introducing the $k$-weighted energy norm
\begin{equation*}
    \|u\|_{E,k}^2
    :=
    \|\nabla u\|_{0,\Omega}^2
    +
    k\|u\|_{0,\partial\Omega}^2
    +
    k^2\|u\|_{0,\Omega}^2,
    \qquad
    u\in H^1(\Omega),
\end{equation*}
together with the mesh-dependent norm
\begin{equation*}
    \|u_h\|_{D}^2
    :=
    \sum_{e\in\mathcal{E}_h^0}
    |\llbracket u_h\rrbracket_e|^2
    +
    |\alpha|
    \sum_{e\in\mathcal{E}_h^B}
    |\llbracket u_h\rrbracket_e|^2
    +
    k^2\|u_h\|_{0,\Omega}^2,
    \qquad
    u_h\in W_h,
\end{equation*}
where
$|\alpha| = \frac{2k h}{\sqrt{(k h)^2+4}}$.
In addition, we define the localized mesh-dependent norm
\begin{equation*}
    \|u_h\|_{D,T_i^m}^2
    :=
    \sum_{e\in\mathcal{E}_{T_i^m}^0}
    |\llbracket u_h\rrbracket_e|^2
    +
    \sum_{e\in\mathcal{E}_{T_i^m}^B}
    \alpha_e
    |\llbracket u_h\rrbracket_e|^2
    +
    k^2\|u_h\|_{0,T_i^m}^2,
    \qquad
    u_h\in W_h(T_i^m)
\end{equation*}
with
\begin{equation*}
    \alpha_e
    :=
    \begin{cases}
        1, & e\in\mathcal{E}_h^0,\\[1mm]
        |\alpha|, & e\in\mathcal{E}_h^B.
    \end{cases}
\end{equation*}

Before establishing the discrete inf--sup stability, we give the following wave-number-explicit
stability estimate for the continuous Helmholtz problem \cite{ChenLuXu2013,CummingsFeng2006}.
\begin{lemma}\label{continue_stability} 
Let $(\Bq,u)$ solve \eqref{mix_hel_imp} with $f\in L^2(\Omega)$,
$g=0$, and $k\geq k_0>0$. Then $u\in H^2(\Omega)$ and
\begin{align*}
k\|u\|_{0,\Omega}
+\|u\|_{1,\Omega}+k^{\frac{1}{2}}\|u\|_{0,\partial\Omega}
+k^{-1}\|\boldsymbol q\|_{1,\Omega}
\le 
C_{\rm stab}\|f\|_{0,\Omega}.
\end{align*}
Here $C_{\rm stab}$ depends on $\Omega$ and $k_0$, but not on $k$.
\end{lemma}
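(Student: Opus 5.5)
This is the standard wavenumber-explicit estimate for the interior impedance problem on a convex polygon. I would prove it in three stages: a Rellich-type multiplier identity for the $k$-weighted $H^1$ bound, then elliptic $H^2$ regularity, then translation to the flux.

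\textbf{Stage 1: $k$-weighted $H^1$ bound.} Since $\boldsymbol q=-\nabla u$, the mixed system \eqref{mix_hel_imp} with $g=0$ is equivalent to \eqref{Hel_imp}. Start from the variational form
\[
(\nabla u,\nabla v)-k^2(u,v)-ik\langle u,v\rangle_{\partial\Omega}=(f,v), \qquad v=u.
\]
\begin{itemize}
\item The imaginary part gives $k\|u\|_{0,\partial\Omega}^2\le \|f\|_{0,\Omega}\|u\|_{0,\Omega}$.
\item The real part gives $\|\nabla u\|_{0,\Omega}^2=k^2\|u\|_{0,\Omega}^2+\Re(f,u)$.
\end{itemize}
Together these reduce everything to controlling $k\|u\|_{0,\Omega}$.

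That control comes from the multiplier $v=\boldsymbol x\cdot\nabla u$, with the origin placed inside $\Omega$. Convexity makes $\Omega$ star-shaped, so $\boldsymbol x\cdot\boldsymbol n\ge\gamma>0$ on $\partial\Omega$. Because the multiplier requires $u\in H^2$, I would first establish the regularity of Stage 2 qualitatively (existence follows from uniqueness via the imaginary-part identity and Fredholm theory) and only then apply the Rellich identity. Integration by parts gives, in 2D,
\[
k^2\|u\|_{0,\Omega}^2=\Re(f,\boldsymbol x\cdot\nabla u)+\text{boundary terms},
\]
where the boundary terms involve $\boldsymbol x\cdot\boldsymbol n\,(k^2|u|^2-|\nabla u|^2)$ and $\Re(\partial_n u\,\boldsymbol x\cdot\nabla\bar u)$. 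Then:
\begin{itemize}
\item Insert $\partial_n u=iku$ on $\partial\Omega$.
\item Split $\nabla u$ into its normal and tangential parts.
\item Absorb the good-sign tangential term using $\boldsymbol x\cdot\boldsymbol n\ge\gamma$.
\end{itemize}
The remaining boundary terms are bounded by $Ck^2\|u\|_{0,\partial\Omega}^2$. Applying the Stage 1 bound for $k\|u\|_{0,\partial\Omega}^2$ and Young's inequality yields
\[
k\|u\|_{0,\Omega}+\|\nabla u\|_{0,\Omega}+k^{1/2}\|u\|_{0,\partial\Omega}\le C\|f\|_{0,\Omega}.
\]

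\textbf{Stage 2: $H^2$ regularity.} Rewrite the problem as $-\Delta u+u=\tilde f:=f+(k^2+1)u$ with Neumann data $\tilde g:=iku$. On a convex polygon, the Neumann problem gives
\[
\|u\|_{2,\Omega}\le C\bigl(\|\tilde f\|_{0,\Omega}+\|\tilde g\|_{1/2,\partial\Omega}\bigr).
\]
By Stage 1, $\|\tilde f\|_{0,\Omega}\le Ck\|f\|_{0,\Omega}$. For the boundary term, the trace inequality gives $\|ku\|_{1/2,\partial\Omega}\le Ck\|u\|_{1,\Omega}\le Ck\|f\|_{0,\Omega}$. Hence $\|u\|_{2,\Omega}\le Ck\|f\|_{0,\Omega}$.

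\textbf{Stage 3: the flux.} We have $k^{-1}\|\boldsymbol q\|_{1,\Omega}\le k^{-1}\|u\|_{2,\Omega}\le C\|f\|_{0,\Omega}$, which completes the estimate.

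\textbf{Main obstacle.} The delicate step is the Rellich identity on a polygon. It needs the $H^2$ justification (via a density argument or the qualitative regularity from Stage 2), and corners must be handled so that the normal/tangential splitting remains valid almost everywhere on $\partial\Omega$. This is exactly where convexity and star-shapedness enter, and it is the step for which I would follow the arguments of \cite{CummingsFeng2006,ChenLuXu2013} most closely.
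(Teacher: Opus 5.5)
Your proposal is correct and is essentially the paper's approach: the paper does not prove this lemma itself but quotes it from \cite{ChenLuXu2013,CummingsFeng2006}, and those references argue exactly as you do. That is, the Rellich multiplier $\boldsymbol x\cdot\nabla u$ on a convex (hence star-shaped with respect to a ball) domain, combined with the real and imaginary energy identities, gives the $k$-weighted $H^1$ bound, and convex-polygon Neumann regularity then yields $\|u\|_{2,\Omega}\le Ck\|f\|_{0,\Omega}$ and the flux estimate. Your appeal to existence via Fredholm theory is unnecessary, since the lemma presupposes a solution. If you keep it, note that the imaginary-part identity only gives $u|_{\partial\Omega}=0$; uniqueness additionally needs $\partial_n u=iku=0$ on $\partial\Omega$ and unique continuation.
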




To facilitate the stability and error analysis, we further introduce the
auxiliary sesquilinear form
\begin{align*}
    b_h(v,w)
:=
a_h(v,w)+2k^2(v,w)_\Omega,
\qquad
\forall v,w\in W_h.
\end{align*}
The corresponding energy norm is denoted by $\|\cdot\|_D$.
The following lemma summarizes the properties of $a_h(\cdot,\cdot)$
and $b_h(\cdot,\cdot)$ needed in the subsequent analysis.

\begin{lemma}\label{a_and_b}
The following statements hold.
\begin{enumerate}
\item The discrete scheme \eqref{GMs} is consistent:
\begin{equation*}
a_h(u_h-(u_f+u_{\rm ms}),v_{\rm ms})=0
\qquad \forall v_{\rm ms}\in W_{\rm ms}.
\end{equation*}

\item The following continuity estimate holds:
\begin{equation*}\label{eq:lemma37-continuity}
|a_h(w,v)| \le \|w\|_D\,\|v\|_D,\quad|b_h(w,v)| \le \|w\|_D\,\|v\|_D
\qquad \forall w,v\in W_h.
\end{equation*}
\item The sesquilinear form \(b_h(\cdot,\cdot)\) is coercive on \(W_h\):
\begin{equation*}
|b_h(v,v)| \ge \frac{1}{\sqrt{2}}\|v\|_D^2
\qquad \forall v\in W_h.
\end{equation*}
\end{enumerate}
\end{lemma}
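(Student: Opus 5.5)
Each of the three statements follows by direct computation from the explicit form of $a_h$, so I treat them in turn.

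\emph{Consistency (1).} The fine-grid solution satisfies \eqref{bf} for all $p_h\in W_h$, and $W_{\rm ms}\subset W_h$. I therefore take $p_h=v_{\rm ms}\in W_{\rm ms}$ in \eqref{bf} and subtract \eqref{GMs}. The right-hand side of \eqref{GMs} is exactly the right-hand side of \eqref{bf} minus $a_h(u_f,v_{\rm ms})$. Sesquilinearity then gives
\[
a_h(u_h,v_{\rm ms})-a_h(u_{\rm ms},v_{\rm ms})-a_h(u_f,v_{\rm ms})=0 ,
\]
which is the claim. The only point to check is that $u_f\in W_h$, and this holds because each local corrector is piecewise constant on the fine grid.

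\emph{Continuity (2).} I write $a_h(w,v)$ as a sum of three groups of terms: interior jumps, boundary jumps weighted by $\alpha$ with $|\alpha|$ as given, and $-k^2(w,v)_\Omega$. The triangle inequality bounds the modulus by
\[
\sum_{e\in\mathcal E_h^0}|\llbracket w\rrbracket_e||\llbracket v\rrbracket_e|+|\alpha|\sum_{e\in\mathcal E_h^B}|\llbracket w\rrbracket_e||\llbracket v\rrbracket_e|+k^2\|w\|_0\|v\|_0 .
\]
This is an inner product of the vectors $(|\llbracket w\rrbracket_e|,\ |\alpha|^{1/2}|\llbracket w\rrbracket_e|,\ k\|w\|_0)$ and the analogous vector for $v$, so the discrete Cauchy--Schwarz inequality gives $\|w\|_D\|v\|_D$.

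The form $b_h$ differs from $a_h$ only in that the mass term becomes $+k^2(w,v)_\Omega$. The same triangle-inequality estimate therefore applies verbatim, and $|b_h(w,v)|\le\|w\|_D\|v\|_D$ with constant $1$.

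\emph{Coercivity (3).} This is the only step needing care, because $\alpha$ is complex. I compute
\[
b_h(v,v)=A+\alpha B+k^2\|v\|_0^2,\qquad A=\sum_{e\in\mathcal E_h^0}|\llbracket v\rrbracket_e|^2\ge0,\quad B=\sum_{e\in\mathcal E_h^B}|\llbracket v\rrbracket_e|^2\ge0 .
\]
Rationalizing, $\alpha=\dfrac{2kh(kh-2i)}{(kh)^2+4}$, so $\Re\alpha>0$ and $\Im\alpha<0$.
\begin{itemize}
\item The real part is $\Re b_h(v,v)=A+k^2\|v\|_0^2+\Re\alpha\,B$.
\item The imaginary part satisfies $|\Im b_h(v,v)|=|\Im\alpha|\,B$.
\end{itemize}
Since $|z|\ge(|\Re z|+|\Im z|)/\sqrt2$, it follows that
\[
|b_h(v,v)|\ge\frac{1}{\sqrt2}\Bigl(A+k^2\|v\|_0^2+(\Re\alpha+|\Im\alpha|)B\Bigr).
\]
Finally, $\Re\alpha+|\Im\alpha|\ge|\alpha|$ because $|x|+|y|\ge\sqrt{x^2+y^2}$. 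Hence the bracket is at least $\|v\|_D^2$, which proves (3).
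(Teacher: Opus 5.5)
Your proof is correct. Parts (1) and (2) follow the paper's argument: you subtract \eqref{GMs} from \eqref{bf} tested with $v_{\rm ms}$, and you apply a termwise Cauchy--Schwarz bound using $|\alpha\,\llbracket w\rrbracket_e\overline{\llbracket v\rrbracket_e}|=|\alpha|\,|\llbracket w\rrbracket_e|\,|\llbracket v\rrbracket_e|$.

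For (3) your route differs. The paper writes $\alpha=|\alpha|e^{-i\theta}$ with $\theta\in(0,\pi/2)$ and multiplies $b_h$ by the unimodular factor $e^{i\theta/2}$. This gives every group of terms the same real factor, so that $\Re\bigl(e^{i\theta/2}b_h(v,v)\bigr)=\cos(\theta/2)\|v\|_D^2$ exactly. It then concludes from $|b_h(v,v)|\ge\Re\bigl(e^{i\theta/2}b_h(v,v)\bigr)$ and $\cos(\theta/2)>1/\sqrt2$. You instead split $b_h(v,v)$ into real and imaginary parts and combine $|z|\ge(|\Re z|+|\Im z|)/\sqrt2$ with $\Re\alpha+|\Im\alpha|\ge|\alpha|$.

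The rotation gives slightly more:
\begin{itemize}
\item a $kh$-dependent constant $\cos(\theta/2)\in(1/\sqrt2,1)$, although it tends to $1/\sqrt2$ as $kh\to0$, the regime of interest;
\item coercivity of the rotated form in the Lax--Milgram sense, i.e.\ a lower bound on the real part after a fixed phase rotation.
\end{itemize}
Your argument avoids the angle and uses only $\Re\alpha\ge0$. It delivers exactly the modulus bound $|b_h(v,v)|\ge\frac{1}{\sqrt2}\|v\|_D^2$ stated in the lemma, and that is all that is used later. Both inf--sup theorems, the $b_h$-projection $R^b_{\rm ms}$, and the quasi-optimality proof invoke only $|b_h(v,v)|$. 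In finite dimensions, this bound on $W_{\rm ms}$ already ensures that $R^b_{\rm ms}$ is well defined.
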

\begin{proof}
    (1)
    Since \(W_{\rm ms}\subset W_h\), for any \(v_{\rm ms}\in W_{\rm ms}\), in (\ref{bf}) and (\ref{GMs}), we take
\(p_h=v_{\rm ms}\) in the fine-scale discrete problem. Hence, $a_h(u_h,v_{\rm ms})
=
(f,v_{\rm ms})_\Omega+\frac{2i}{k h+2i}
\langle g,v_{\rm ms}\rangle_{\partial\Omega}.$ 
On the other hand, by the definition of the multiscale solution \(u_{\rm ms}\),
\[
a_h(u_{\rm ms},v_{\rm ms})
=
(f,v_{\rm ms})_\Omega-a_h(u_f,v_{\rm ms})+\frac{2i}{k h+2i}
\langle g,v_{\rm ms}\rangle_{\partial\Omega}.
\]
Subtracting the two identities gives $a_h(u_h-(u_f+u_{\rm ms}),v_{\rm ms})=0,
\quad \forall v_{\rm ms}\in W_{\rm ms}.$\\
(2) 
For any \(u,v\in W_h\), it follows immediately from the Cauchy--Schwarz inequality that
\begin{align*}
|a_h(u,v)| &\le \|u\|_D\|v\|_D,
\qquad
|b_h(u,v)| \le \|u\|_D\|v\|_D.
\end{align*}

\noindent
\textnormal{(3)} By definition of $\alpha$, we have $|\alpha|
=
\frac{2k h}{\sqrt{(k h)^2+4}}. $  Since \(k h>0\), there exists a unique
\(\theta\in(0,\pi/2)\) such that
$\alpha
=
|\alpha| e^{-\mathrm{i}\theta},$  where $\cos\theta
=
\frac{k h}{\sqrt{(k h)^2+4}},$ $
\sin\theta=
\frac{2}{\sqrt{(k h)^2+4}}. $ 
Define the phase-rotated sesquilinear form $\widetilde b_h(w,v)
:=
e^{\mathrm{i}\theta/2}b_h(w,v).$ For any \(v\in W_h\), using
\(\alpha=|\alpha| e^{-\mathrm{i}\theta}\), we have
\begin{align*}
\operatorname{Re}\widetilde b_h(v,v)
&=
\operatorname{Re}
\left[
e^{\mathrm{i}\theta/2}
\left(
\sum_{e\in\mathcal E_h^0}|\llbracket v\rrbracket_e|^2
+
\alpha
\sum_{e\in\mathcal E_h^B}|\llbracket v\rrbracket_e|^2
+
k^2\|v\|_{0,\Omega}^2
\right)
\right]
\\
&=
\operatorname{Re}
\left[
e^{\mathrm{i}\theta/2}
\left(
\sum_{e\in\mathcal E_h^0}|\llbracket v\rrbracket_e|^2
+
k^2\|v\|_{0,\Omega}^2
\right)
\right.\left.
+
|\alpha| e^{-\mathrm{i}\theta/2}
\sum_{e\in\mathcal E_h^B}|\llbracket v\rrbracket_e|^2
\right]
\\
&=
\cos\left(\frac{\theta}{2}\right)
\left(
\sum_{e\in\mathcal E_h^0}|\llbracket v\rrbracket_e|^2
+
|\alpha|
\sum_{e\in\mathcal E_h^B}|\llbracket v\rrbracket_e|^2
+
k^2\|v\|_{0,\Omega}^2
\right)
\\
&=
\cos\left(\frac{\theta}{2}\right)
\|v\|_D^2.
\end{align*}
Set $c_\theta
:=
\cos\left(\frac{\theta}{2}\right).$ 
By the half-angle identity , we have $c_\theta
=
\left(
\frac{1+\cos\theta}{2}
\right)^{1/2}
\nonumber
=
\left[
\frac12
\left(
1+
\frac{k h}
{\sqrt{(k h)^2+4}}
\right)
\right]^{1/2}. $ 
Since \(0<\theta<\pi/2\), it follows that $\frac{1}{\sqrt2}
<
c_\theta
<
1.$ 
Consequently,
\begin{align}
\operatorname{Re}\widetilde b_h(v,v)
&=
c_\theta\|v\|_D^2
\geq
\frac{1}{\sqrt2}\|v\|_D^2,
\qquad
\forall v\in W_h.
\label{eq:rotated-b-coercivity}
\end{align}
Therefore, the rotated sesquilinear form
\(\widetilde b_h(\cdot,\cdot)\) is coercive on \(W_h\) with respect
to \(\|\cdot\|_D\). 
Moreover, since $|\widetilde b_h(v,v)|
=
|b_h(v,v)|,$ we deduce from \eqref{eq:rotated-b-coercivity} that
\begin{align}
|b_h(v,v)|
=
|\widetilde b_h(v,v)|
\geq
\operatorname{Re}\widetilde b_h(v,v)
\nonumber
\geq
\frac{1}{\sqrt2}\|v\|_D^2,
\qquad
\forall v\in W_h.
\end{align}
Hence \(b_h(\cdot,\cdot)\) is uniformly coercive  with
respect to the norm \(\|\cdot\|_D\).
\end{proof}




\subsection{Inf--Sup Condition for the Fine-Grid Discretization}
The goal of this subsection is to establish the well-posedness of the global fine-grid problem \eqref{bf} by proving an appropriate inf--sup condition.

For arbitrary $w_h\in W_h$, let $z=z(w_h)$ solve
\begin{subequations}
\label{eq:continuous-adjoint}
\begin{align}
-\Delta z-k^2z&=2k^2w_h &&\text{ in }\Omega,
\label{eq:continuous-adjoint-a}\\
\partial_nz+ik z&=0  &&\text{ on }\partial\Omega.
\label{eq:continuous-adjoint-b}
\end{align}
\end{subequations}
Because $k\|w_h\|_{0,\Omega}\leq\|w_h\|_D$, \cref{continue_stability} immediately yields
\begin{align}
\|z\|_{E,k}
&\leq
2\sqrt{3}\,C_{\rm stab}\,
k\|w_h\|_D,
\label{eq:z-energy-w}\\
|z|_{2,\Omega}
&\leq
2C_{\rm stab}k^2\|w_h\|_D.
\label{eq:z-H2-w}
\end{align} 
These two estimates characterize the wave-number dependence of the 
auxiliary solution and will be used in the consistency analysis below. 
To relate the continuous auxiliary  solution $z$ to the discrete space 
$W_h$, we introduce the piecewise-constant $L^2$-projection 
$P_h^0:L^2(\Omega)\rightarrow W_h$ defined by
\begin{align*}
v_t^{\rm av}
:= 
(P_h^0v)|_t
=
\frac{1}{|t|}
\int_t v\,dx,
\qquad \forall t\in\mathcal T_h.
\end{align*}
Since $P_h^0$ is the $L^2$-orthogonal projection onto $W_h$, for every $w_h\in W_h$,
\begin{align}
(w_h,v)_\Omega=(w_h,P_h^0v)_\Omega.
\label{eq:mass-exact}
\end{align}
For later use, we also denote the average of $v$ over an edge
$e\in\mathcal E_h$ by $v_e^{\rm av}
:=
\frac{1}{|e|}
\int_e v\,ds.$

To estimate the projection-induced consistency terms while keeping
their dependence on the discretization parameters explicit, we collect some technical results here for later use and their proofs can be found in Appendix~\ref{proof_B} --Appendix~\ref{proof_D}.
\begin{lemma}
\label{cell-edge-average-H1}
Let $t\in \mathcal{T}_h$ and let $e\subset\partial t$ be one of its edges,
with unit normal $\Bn_e$. For every $v\in H^1(t)$,
\begin{align*}
|v_t^{\rm av}-v_e^{\rm av}|
&\leq \frac{1}{\sqrt{3}}\|\partial_{n_e}v\|_{0,t}
\leq \frac{1}{\sqrt{3}}\|\nabla v\|_{0,t}.
\end{align*}
\end{lemma}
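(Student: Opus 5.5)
We reduce to a reference configuration and write the difference of averages as an integral of the normal derivative. Since $\mathcal T_h$ consists of squares, we may translate and rotate so that $t=(0,h)^2$ and $e=\{0\}\times(0,h)$, with $\Bn_e=\pm(1,0)$. The sign of the normal does not affect $|\partial_{n_e}v|$, so $\partial_{n_e}v=\pm\partial_{x_1}v$. Then $|t|=h^2$, $|e|=h$, and the claim is
\[
\Big|\frac1{h^2}\int_0^h\!\!\int_0^h v(x_1,x_2)\,dx_1dx_2-\frac1h\int_0^h v(0,x_2)\,dx_2\Big|\le\frac1{\sqrt3}\|\partial_{x_1}v\|_{0,t}.
\]
The second inequality, $\|\partial_{n_e}v\|_{0,t}\le\|\nabla v\|_{0,t}$, holds pointwise and needs no argument.

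First, we take $v\in C^1(\overline t)$. Writing $v_e^{\rm av}=\frac1{h^2}\int_0^h\!\int_0^h v(0,x_2)\,dx_1dx_2$, we get
\[
v_t^{\rm av}-v_e^{\rm av}=\frac1{h^2}\int_0^h\!\!\int_0^h\big(v(x_1,x_2)-v(0,x_2)\big)\,dx_1dx_2.
\]
We insert $v(x_1,x_2)-v(0,x_2)=\int_0^{x_1}\partial_{x_1}v(s,x_2)\,ds$ and apply Fubini in $(s,x_1)$. Since $\int_s^h dx_1=h-s$, this gives the key identity
\[
v_t^{\rm av}-v_e^{\rm av}=\frac1{h^2}\int_0^h\!\!\int_0^h (h-s)\,\partial_{x_1}v(s,x_2)\,ds\,dx_2 .
\]

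Next, Cauchy--Schwarz on $t$ gives
\[
|v_t^{\rm av}-v_e^{\rm av}|\le\frac1{h^2}\|\partial_{x_1}v\|_{0,t}\Big(\int_0^h\!\!\int_0^h(h-s)^2\,ds\,dx_2\Big)^{1/2}=\frac1{h^2}\Big(\frac{h^4}{3}\Big)^{1/2}\|\partial_{x_1}v\|_{0,t}.
\]
The right-hand side equals $\frac1{\sqrt3}\|\partial_{x_1}v\|_{0,t}$, which is the desired bound with the sharp constant and no dependence on $h$.

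Finally, we extend to $v\in H^1(t)$ by density of $C^1(\overline t)$ in $H^1(t)$. Both sides are continuous in the $H^1(t)$ norm: the cell average is continuous in $L^2$, and the edge average is continuous by the trace theorem, since $|v_e^{\rm av}|\le h^{-1/2}\|v\|_{0,e}$. The main point needing care is this density step, that is, interpreting $v_e^{\rm av}$ through the trace; the computation itself is elementary.
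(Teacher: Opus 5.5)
Your proof is correct and follows essentially the same route as the paper: reduce to $t=(0,h)^2$, write $v_t^{\rm av}-v_e^{\rm av}$ as $h^{-2}\int_t (h-s)\,\partial_{n_e}v$ by the fundamental theorem of calculus and Fubini, apply Cauchy--Schwarz to get the constant $1/\sqrt{3}$, and conclude by density. The only differences are cosmetic: you use the left edge rather than the bottom edge, and $C^1(\overline t)$ rather than $C^\infty(\overline t)$. Your explicit appeal to the trace theorem for continuity of the edge average makes the density step slightly more careful than the paper's.
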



\begin{lemma}
\label{lem:flux}
For every $v\in H^2(\Omega)$, the following estimate holds:
\begin{align*}
    \bigl(
\sum_{e\in\mathcal{E}_h^0}
\left|
\llbracket P_h^0v\rrbracket_e
+\int_e\partial_{n_e}v\,ds
\right|^2
\bigr)^{1/2}
\le
2C_{\rm bh}
h|v|_{2,\Omega}.
\end{align*}
\end{lemma}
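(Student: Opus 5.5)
We prove \Cref{lem:flux} edge by edge and then sum, using a Bramble--Hilbert argument on the patch $\omega_e:=t_1\cup t_2$ of the two fine elements sharing $e\in\mathcal E_h^0$. The key observation is that the quantity
\[
\ell_e(v):=\llbracket P_h^0v\rrbracket_e+\int_e\partial_{n_e}v\,ds
=v_{t_1}^{\rm av}-v_{t_2}^{\rm av}+\int_e\partial_{n_e}v\,ds
\]
is a bounded linear functional on $H^2(\omega_e)$ that vanishes on $P_1(\omega_e)$. Indeed, let $v$ be affine with $\nabla v=\boldsymbol c$. Since $v$ is affine, its cell average equals its value at the centroid. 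The centroids of $t_1,t_2$ differ by $h\Bn_e$, because $\Bn_e$ points from $t_1$ to $t_2$. Hence $v_{t_1}^{\rm av}-v_{t_2}^{\rm av}=-h\,\boldsymbol c\cdot\Bn_e$. On the other hand, $\int_e\partial_{n_e}v\,ds=|e|\,\boldsymbol c\cdot\Bn_e=h\,\boldsymbol c\cdot\Bn_e$. The two contributions cancel, so $\ell_e$ vanishes on $P_1$. This cancellation is precisely the consistency of the two-point flux on a square mesh.

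Next we bound $\ell_e$ on $H^2(\omega_e)$ by a scaling argument. Map $\omega_e$ to a reference rectangle $\hat\omega=(-1,1)\times(0,1)$ (or its rotation) by $x=h\hat x$. The averages are invariant under this map, and the edge term scales as $\int_e\partial_{n_e}v\,ds=\int_{\hat e}\partial_{\hat n}\hat v\,d\hat s$. Therefore $\ell_e(v)=\hat\ell(\hat v)$, where $|\hat\ell(\hat v)|\le \hat C\|\hat v\|_{2,\hat\omega}$ by the trace theorem applied to $\nabla\hat v\in H^1$ and by Cauchy--Schwarz for the averages. Since $\hat\ell$ annihilates $P_1$, the Bramble--Hilbert lemma gives $|\hat\ell(\hat v)|\le C_{\rm bh}|\hat v|_{2,\hat\omega}$. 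Scaling back, $|\hat v|_{2,\hat\omega}=h^{2-1}|v|_{2,\omega_e}=h|v|_{2,\omega_e}$ in two dimensions. Consequently
\[
|\ell_e(v)|\le C_{\rm bh}\,h\,|v|_{2,\omega_e}.
\]
The constant $C_{\rm bh}$ is mesh independent because all interior-edge patches are congruent, up to rotation, to one reference patch. This is where we use that $\mathcal T_h$ consists of squares; for shape-regular rectangles, the constant depends only on the aspect ratios.

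Finally we sum over the interior edges:
\[
\sum_{e\in\mathcal E_h^0}|\ell_e(v)|^2\le C_{\rm bh}^2h^2\sum_{e}|v|_{2,\omega_e}^2 .
\]
Each fine element $t$ belongs to the patches of at most four interior edges, namely its own edges. This overlap count bounds the sum by $4C_{\rm bh}^2h^2|v|_{2,\Omega}^2$, and taking square roots yields the factor $2C_{\rm bh}h|v|_{2,\Omega}$ claimed in \Cref{lem:flux}.

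The step requiring the most care is the exact-on-$P_1$ verification together with the precise scaling of the edge integral. The $|e|=h$ factor from the integral must match the centroid distance $h$ for the cancellation to occur, and the scaling must produce exactly one power of $h$. Both checks are elementary on the uniform square mesh.
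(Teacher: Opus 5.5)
Your proposal is correct and follows essentially the same route as the paper. The paper defines the functional $\widehat{\mathcal L}$ on the reference patch $(-1,1)\times(0,1)$, verifies that it annihilates $\mathbb P_1$ by direct computation (your centroid argument is a coordinate-free version of this), applies the Bramble--Hilbert lemma, and scales back to obtain $\bigl|\llbracket P_h^0v\rrbracket_e+\int_e\partial_{n_e}v\,ds\bigr|\le C_{\rm bh}h|v|_{2,\omega_e}$; it then sums over edges using the same four-patch overlap count.
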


\begin{lemma}
\label{lem:boundary-average}
For every \(v\in H^2(\Omega)\), we have
\begin{align}
\sum_{e\in\mathcal{E}_h^B}
|v_e^{\rm av}-v_{t(e)}^{\rm av}|^2
&\le 
h\|\partial_n v\|_{0,\partial\Omega}^2
+h^2|v|_{2,\Omega}^2.
\label{eq:boundary-average}
\end{align}
If $v$ additionally satisfies the impedance condition \eqref{eq:continuous-adjoint-b}, then
\begin{align}
\sum_{e\in\mathcal{E}_h^B}
|v_e^{\rm av}-v_{t(e)}^{\rm av}|^2
&\le
 k^2 h\|v\|_{0,\partial\Omega}^2
+h^2|v|_{2,\Omega}^2.
\label{eq:boundary-average-imp}
\end{align}
\end{lemma}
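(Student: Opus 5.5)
The plan is to prove the estimate edge by edge and then sum. Fix a boundary edge $e\in\mathcal E_h^B$ with its adjacent fine element $t=t(e)$, a square of side $h$. First I would use a sharpened form of \cref{cell-edge-average-H1} that keeps the $H^2$ regularity of $v$.

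Consider the one-dimensional averaging in the direction $\Bn_e$. Write $t=e\times(0,h)$ in local coordinates $(s,\tau)$, with $\tau$ the distance from $e$. Then
\[
v_t^{\rm av}-v_e^{\rm av}=\frac{1}{|t|}\int_t \bigl(v(s,\tau)-v(s,0)\bigr)\,dx
=-\frac{1}{|t|}\int_t\int_0^\tau \partial_{n}v(s,\sigma)\,d\sigma\,dx .
\]
Here $\partial_n=-\partial_\tau$ is the outward normal derivative on $e$. Next, split
\[
\partial_n v(s,\sigma)=\partial_nv(s,0)+\bigl(\partial_nv(s,\sigma)-\partial_nv(s,0)\bigr).
\]

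The first part gives a term of the form
\[
c\int_e \partial_nv\,ds\cdot\frac{h}{|e|}\,(\text{weight}).
\]
By Cauchy--Schwarz it is bounded by $c\,h^{1/2}\|\partial_nv\|_{0,e}$, with a constant at most $1/2$ since the weight averages $\tau$ over $(0,h)$.

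The second part is a double integral of second derivatives. Cauchy--Schwarz gives a bound $c\,h\,|v|_{2,t}$; a scaling argument on the reference square checks the power of $h$. Squaring with $(a+b)^2\le 2a^2+2b^2$ gives
\[
|v_e^{\rm av}-v_t^{\rm av}|^2\le 2c_1^2 h\|\partial_nv\|_{0,e}^2+2c_2^2h^2|v|_{2,t}^2 .
\]
The constants must satisfy $2c_1^2\le1$ and $2c_2^2\le1$; the explicit values ($c_1=1/2$, $c_2\le 1/\sqrt3$ or so) need to be tracked carefully.

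Then sum over $e\in\mathcal E_h^B$. The edges partition $\partial\Omega$, and each boundary element appears at most twice (at corners), so $\sum_e|v|_{2,t(e)}^2\le 2|v|_{2,\Omega}^2$. A corner element has two boundary edges, and this factor 2 must be absorbed in $c_2$. This proves \eqref{eq:boundary-average}.

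For \eqref{eq:boundary-average-imp}, substitute $\partial_nv=-ikv$ from \eqref{eq:continuous-adjoint-b} on $\partial\Omega$. This gives $\|\partial_nv\|_{0,\partial\Omega}^2=k^2\|v\|_{0,\partial\Omega}^2$, and the second bound follows from the first.

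The main obstacle is the constant bookkeeping: getting coefficient exactly $1$ in front of both terms, with the corner double-counting included. I would first compute sharp one-dimensional constants; the Poincaré-type bound $\int_0^h|\int_0^\tau w|^2\le (h^2/2)\int_0^h|w|^2$ and its analogue appear here. If they are insufficient, the $\partial_n v$ term at $\sigma=0$ would be replaced by its edge average to gain orthogonality, so that the two pieces are $L^2$-orthogonal and no factor 2 is lost.
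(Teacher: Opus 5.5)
Your plan is essentially the paper's proof: write $v_t^{\rm av}-v_e^{\rm av}$ as a normal-direction integral, Taylor-expand $\partial_n v$ from the edge to extract $\tfrac12\int_e\partial_n v\,ds$ plus a second-derivative remainder with kernel $(h-\sigma)^2/2$, apply Cauchy--Schwarz to each piece, sum over boundary edges, and then substitute $\partial_n v=-ikv$. Carrying out the computation gives $c_1=1/2$ and $c_2=1/(2\sqrt5)$ (not $1/\sqrt3$). This is the per-edge bound $\tfrac h2\|\partial_n v\|_{0,e}^2+\tfrac{h^2}{10}|v|_{2,t}^2$, so the corner double counting is absorbed with room to spare and no orthogonality refinement is needed.
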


Combining the preceding local estimates with the regularity properties of
functions in $H^2(\Omega)$, and using the fact that the solution $z(w_h)$
of \eqref{eq:continuous-adjoint} belongs to $H^2(\Omega)$, we obtain the
following stability estimate for $P_h^0 z$.

\begin{lemma}
\label{lem:P0-D}
Let $z\in H^2(\Omega)$ be the solution to problem \eqref{eq:continuous-adjoint}. Then
\begin{align*}
    \|P_h^0z\|_D\le C_{T,k h}k\|w_h\|_D,
\end{align*} where $C_{T,k h}=2\sqrt{3}C_{\rm stab}(k h+1)\sqrt{C_I+\frac43 k h+2}.$
\end{lemma}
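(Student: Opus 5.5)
We bound the three pieces of $\|P_h^0z\|_D^2$ separately: the interior jumps, the boundary jumps weighted by $|\alpha|$, and the mass term $k^2\|P_h^0z\|_{0,\Omega}^2$. Each will be controlled by $\|z\|_{E,k}$ and $h|z|_{2,\Omega}$. We then insert \eqref{eq:z-energy-w} and \eqref{eq:z-H2-w}, using $h|z|_{2,\Omega}\le 2C_{\rm stab}(kh)\,k\|w_h\|_D$. This is where the factor $(kh+1)$ comes from.

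\emph{Mass term.} Since $P_h^0$ is an $L^2$-projection, $k^2\|P_h^0z\|_{0,\Omega}^2\le k^2\|z\|_{0,\Omega}^2\le\|z\|_{E,k}^2$.

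\emph{Interior jumps.} For $e\in\mathcal E_h^0$ we use the triangle inequality and \cref{lem:flux}:
\[
\Bigl(\sum_{e\in\mathcal E_h^0}|\llbracket P_h^0z\rrbracket_e|^2\Bigr)^{1/2}\le 2C_{\rm bh}h|z|_{2,\Omega}+\Bigl(\sum_{e\in\mathcal E_h^0}\Bigl|\int_e\partial_{n_e}z\,ds\Bigr|^2\Bigr)^{1/2}.
\]
To estimate the flux sum, we apply Cauchy--Schwarz, which gives $|\int_e\partial_{n_e}z|^2\le h\|\nabla z\|_{0,e}^2$. We then use the scaled trace inequality $h\|\nabla z\|_{0,e}^2\le C_I(\|\nabla z\|_{0,t}^2+h^2|z|_{2,t}^2)$ on an adjacent element $t$. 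Summing over edges, each element is counted at most four times, and the constant is absorbed into $C_I$. This yields $\sum_e|\llbracket P_h^0z\rrbracket_e|^2\lesssim C_I(\|\nabla z\|_{0,\Omega}^2+h^2|z|_{2,\Omega}^2)$. This step is presumably the origin of $C_I$ in $C_{T,kh}$.

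\emph{Boundary jumps.} For $e\in\mathcal E_h^B$ we have $\llbracket P_h^0z\rrbracket_e=z_{t(e)}^{\rm av}$. We split this as $z_e^{\rm av}+(z_{t(e)}^{\rm av}-z_e^{\rm av})$. The first part satisfies $\sum_e|z_e^{\rm av}|^2\le h^{-1}\|z\|_{0,\partial\Omega}^2$. The second part is bounded by \eqref{eq:boundary-average-imp} of \cref{lem:boundary-average}, since $z$ satisfies the impedance condition \eqref{eq:continuous-adjoint-b}. Multiplying by $|\alpha|\le\min(kh,2)$, the first part gives $|\alpha|h^{-1}\|z\|_{0,\partial\Omega}^2\le 2k\|z\|_{0,\partial\Omega}^2$, which is controlled by $\|z\|_{E,k}^2$. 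The second part gives $|\alpha|(k^2h\|z\|_{0,\partial\Omega}^2+h^2|z|_{2,\Omega}^2)\le (kh)^2\,k\|z\|_{0,\partial\Omega}^2+2h^2|z|_{2,\Omega}^2$. Alternatively, \cref{cell-edge-average-H1} gives the cruder bound $|z_t^{\rm av}-z_e^{\rm av}|^2\le\frac13\|\nabla z\|_{0,t}^2$, which would explain the $\frac43kh$ term after multiplying by $|\alpha|\le kh$. I would use whichever route reproduces the stated constant.

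\emph{Collection.} Collecting the three pieces gives
\[
\|P_h^0z\|_D^2\le \bigl(C_I+\tfrac43kh+2\bigr)(1+kh)^2\bigl(\|z\|_{E,k}+h|z|_{2,\Omega}\bigr)^2
\]
up to arrangement of terms. Inserting $\|z\|_{E,k}\le2\sqrt3C_{\rm stab}k\|w_h\|_D$ and $h|z|_{2,\Omega}\le 2C_{\rm stab}kh\,k\|w_h\|_D$ then yields the claim with $C_{T,kh}$.

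The main obstacle is bookkeeping: matching the exact constant $C_{T,kh}$ requires choosing the right split and weights. The interior flux term through the trace constant $C_I$ is the genuinely delicate estimate. Everything else is a direct application of \cref{lem:flux,lem:boundary-average,cell-edge-average-H1}.
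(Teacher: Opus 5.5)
Your proposal follows the paper's proof essentially step for step. It uses the same three-way split of $\|P_h^0z\|_D^2$, and \cref{lem:flux} plus Cauchy--Schwarz and a scaled trace inequality for the interior jumps. Through your ``alternative'' route it uses \cref{cell-edge-average-H1} for the boundary cell/edge averages, which is exactly what the paper does and is what produces the $\frac43kh$ term. The \cref{lem:boundary-average} route also works, but it gives a different constant.

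One bookkeeping fix is needed in your collection step. The collected bound should be $\|P_h^0z\|_D^2\le(C_I+\frac43kh+2)\|z\|_{E,k}^2+C_Ih^2|z|_{2,\Omega}^2$, with no $(1+kh)^2$ prefactor. That factor only appears after inserting \eqref{eq:z-energy-w}--\eqref{eq:z-H2-w}. As you wrote it, the factor is counted twice, and the final constant is inflated by $(\sqrt3+kh)/\sqrt3$ relative to $C_{T,kh}$.
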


\begin{proof}
We estimate separately the interior-edge, physical-boundary, and mass contributions to the $D$-norm. 

\emph{(i) Interior-edge.} By \cref{lem:flux} and the triangle inequality,
\begin{align*}
\left(\sum_{e\in\mathcal{E}_h^0}|\llbracket P_h^0z\rrbracket _e|^2\right)^{1/2}
&\le
2C_{\rm bh}h|z|_{2,\Omega}
+
\left(
\sum_{e\in\mathcal E_h^0}
\left|
\int_e\partial_{n_e}z\,ds
\right|^2
\right)^{1/2}.
\end{align*}
For each edge, Cauchy--Schwarz inequality gives $\left|\int_e\partial_{n_e}z\,ds\right|^2
\leq h\|\partial_{n_e}z\|_{0,e}^2
\leq h\|\nabla z\|_{0,e}^2.$ Applying  scaled trace inequality\cite[Lemma~1.49]{DiPietroErn2012} elementwise and observing that every interior edge is counted at most twice in the element-boundary sums yields $\sum_{e\in\mathcal{E}_h^0}\left|\int_e\partial_{n_e}z\,ds\right|^2
\le C_{\rm tr}(\|\nabla z\|_{0,\Omega}^2+h^2|z|_{2,\Omega}^2).$ Hence, for a constant $C_I$ depending only on $C_{\rm bh}$ and $C_{\rm tr}$,
\begin{align}
\sum_{e\in\mathcal{E}_h^0}|\llbracket P_h^0z\rrbracket_e|^2
&\leq C_I\left(\|\nabla z\|_{0,\Omega}^2+h^2|z|_{2,\Omega}^2\right).
\label{eq:D-int-bound}
\end{align}
\emph{(ii) Physical-boundary.} By Cauchy--Schwarz inequality, we have 
$$
|z_e^{\rm av}|^2
=\frac1{h^2}\left|\int_ez\,ds\right|^2
\leq \frac1{h^2}|e|\int_e|z|^2\,ds
=\frac1h\|z\|_{0,e}^2.
$$ 
Combining with \cref{cell-edge-average-H1}, for each boundary edge $e\subset\partial t$, 
\begin{align*}
|z_{t(e)}^{\rm av}|^2
\leq 2|z_e^{\rm av}|^2+2|z_{t(e)}^{\rm av}-z_e^{\rm av}|^2
\le \frac{2}{h}\|z\|_{0,e}^2+\frac{2}{3}\|\partial_{n_e}z\|_{0,t}^2.
\end{align*}
Each cell has at most two edges normal to either coordinate direction,
so summing over physical boundary edges gives
\begin{align}
|\alpha|\sum_{e\in\mathcal{E}_h^B}|z_{t(e)}^{\rm av}|^2
&\le \frac{2|\alpha|}{h}\|z\|_{0,\partial\Omega}^2
+\frac{2|\alpha|}{3}
\sum_{e\in\mathcal{E}_h^B}\|\partial_{n_e}z\|_{0,t(e)}^2\notag\\
&\le
\frac{4k}{\sqrt{(k h)^2+4}}\|z\|_{0,\partial\Omega}^2
+\frac{8k h}{3\sqrt{(k h)^2+4}}\|\nabla z\|_{0,\Omega}^2.
\label{physical-b}
\end{align}

\emph{(iii) Mass contribution.}
Since $P_h^0$ is the $L^2$-orthogonal projection, its contractivity gives
\begin{align}
k^2\|P_h^0z\|_{0,\Omega}^2
&\leq k^2\|z\|_{0,\Omega}^2.
\label{eq:D-mass-bound}
\end{align}

Combining \eqref{eq:D-int-bound}, \eqref{physical-b} and \eqref{eq:D-mass-bound}, we have $\|P_h^0z\|_D^2
\le (C_I+\frac43 k h+2) 
\|z\|_{E,k}^2+C_Ih^2|z|_{2,\Omega}^2.$  
Combining \eqref{eq:z-energy-w}, \eqref{eq:z-H2-w}   we obtain $\|P_h^0z\|_D
\le C_{T,k h}k\|w_h\|_D,$ where $C_{T,k h}=2\sqrt{3}C_{\rm stab}(k h+1)\sqrt{C_I+\frac43 k h+2}$.
\end{proof}

By the stability of $P_h^0z$ in the above lemma, we have the following crucial estimate.

\begin{lemma}
\label{lem:cell-identity}
Let $z$ solve \eqref{eq:continuous-adjoint}. 
Then
\begin{align*}
\left|a_h(w_h,P_h^0z)-2k^2\|w_h\|_{0,\Omega}^2\right|
\le C_{\rm con,k h}k^2h\|w_h\|_D^2,
\end{align*}
where $C_{\rm con,k h}
:=4C_{\rm stab}C_{\rm bh}
+\frac{C_{T,k h}}{2}
+2\sqrt{3}C_{\rm stab}(2+k h
). $ 

\end{lemma}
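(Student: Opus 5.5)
The plan is to write $2k^2\|w_h\|_{0,\Omega}^2$ as a discrete sum of the same shape as $a_h(w_h,P_h^0z)$ and compare the two term by term. Testing \eqref{eq:continuous-adjoint-a} against the piecewise constant $w_h$ (i.e.\ taking the $L^2$ inner product $(w_h,\cdot)_\Omega$) gives
\[
2k^2\|w_h\|_{0,\Omega}^2=(w_h,-\Delta z)_\Omega-k^2(w_h,z)_\Omega .
\]
By \eqref{eq:mass-exact}, $(w_h,z)_\Omega=(w_h,P_h^0z)_\Omega$, so the mass terms cancel exactly against the $-k^2(w_h,P_h^0z)_\Omega$ part of $a_h$.

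For the Laplacian term I integrate by parts on each fine cell, $(w_h,-\Delta z)_t=-w_h|_t\,\overline{\int_{\partial t}\partial_{n}z\,ds}$, and regroup by edges using the sign convention behind $\llbracket\cdot\rrbracket_e$. An interior edge then contributes $-\llbracket w_h\rrbracket_e\overline{\int_e\partial_{n_e}z\,ds}$. On a boundary edge I use the impedance condition $\partial_nz=-ikz$, which turns the contribution into $\llbracket w_h\rrbracket_e\,\overline{ikh\,z_e^{\rm av}}$. Subtracting gives the error identity
\begin{align*}
a_h(w_h,P_h^0z)-2k^2\|w_h\|_{0,\Omega}^2
&=\sum_{e\in\mathcal E_h^0}\llbracket w_h\rrbracket_e\,\overline{\Bigl(\llbracket P_h^0z\rrbracket_e+\int_e\partial_{n_e}z\,ds\Bigr)}
+\sum_{e\in\mathcal E_h^B}\llbracket w_h\rrbracket_e\,\overline{\bigl(\overline{\alpha}\,z_{t(e)}^{\rm av}-ikh\,z_e^{\rm av}\bigr)} .
\end{align*}
I expect the signs and conjugations here to need some care, but nothing more.

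The interior sum is handled by Cauchy--Schwarz, \cref{lem:flux} and \eqref{eq:z-H2-w}. This gives at most $\|w_h\|_D\cdot 2C_{\rm bh}h\cdot 2C_{\rm stab}k^2\|w_h\|_D$, which is exactly the first constant $4C_{\rm stab}C_{\rm bh}$ in $C_{{\rm con},kh}$.

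For the boundary sum I split the boundary residual as
\[
\overline\alpha\,z_t^{\rm av}-ikh\,z_e^{\rm av}=\overline\alpha\,(z_t^{\rm av}-z_e^{\rm av})+(\overline\alpha-ikh)\,z_e^{\rm av}.
\]
A direct computation gives $\overline\alpha-ikh=-ik^2h^2/(kh-2i)$, so $|\overline\alpha-ikh|\le k^2h^2/2$; this is a consistency gain of order $h$.

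For the first piece I apply a weighted Cauchy--Schwarz inequality,
\[
\Bigl(|\alpha|\sum|\llbracket w_h\rrbracket_e|^2\Bigr)^{1/2}\Bigl(|\alpha|\sum|z_t^{\rm av}-z_e^{\rm av}|^2\Bigr)^{1/2}.
\]
I then use \eqref{eq:boundary-average-imp} together with $|\alpha|\le kh$, so that this piece is controlled by $\|w_h\|_D\,(kh)^{1/2}\bigl(kh^{1/2}\|z\|_{0,\partial\Omega}+h|z|_{2,\Omega}\bigr)$. Finally I bound $k^{1/2}\|z\|_{0,\partial\Omega}$ by \eqref{eq:z-energy-w} and $|z|_{2,\Omega}$ by \eqref{eq:z-H2-w}, which yields $Ck^2h\|w_h\|_D^2$. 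Some room remains in the bookkeeping to absorb a factor into the $C_{T,kh}/2$ term if convenient.

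The second piece is the main obstacle, because a naive bound loses a factor $h^{1/2}$. If I bound $\sum_B|w_{t}|^2$ by $\|w_h\|_{0,\Omega}^2/h^2$, I only obtain $k^2h^{1/2}$. To avoid this I will instead use the boundary part of the $D$-norm, $\sum_B|\llbracket w_h\rrbracket_e|^2\le\|w_h\|_D^2/|\alpha|$, together with the lower bound $|\alpha|\ge 2kh/(kh+2)$. I will also use $\sum_B|z_e^{\rm av}|^2\le h^{-1}\|z\|_{0,\partial\Omega}^2$, which follows from Cauchy--Schwarz as in the proof of \cref{lem:P0-D}. Combining these, the second piece is at most
\[
\frac{k^2h^2}{2}\Bigl(\frac{kh+2}{2kh}\Bigr)^{1/2}h^{-1/2}\|z\|_{0,\partial\Omega}\|w_h\|_D\lesssim (2+kh)\,kh\,k^{1/2}\|z\|_{0,\partial\Omega}\,\|w_h\|_D .
\]
By \eqref{eq:z-energy-w} this is bounded by $2\sqrt3C_{\rm stab}(2+kh)k^2h\|w_h\|_D^2$, which accounts for the last constant in $C_{{\rm con},kh}$. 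Adding the three contributions gives the claimed estimate.
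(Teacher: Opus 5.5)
Your proof is correct. It agrees with the paper up to the boundary residual: the elementwise integration by parts gives the same identity (your boundary sum is exactly the paper's $R_2$), and the interior sum is handled identically via \cref{lem:flux} and \eqref{eq:z-H2-w}.

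The difference is how you split $\overline{\alpha}\,z_{t(e)}^{\rm av}-ikh\,z_e^{\rm av}$. The paper writes it as $(\overline{\alpha}-ikh)\,z_{t(e)}^{\rm av}-ikh\,(z_e^{\rm av}-z_{t(e)}^{\rm av})$. This puts the small coefficient on the cell average; note that $|\alpha+ikh|=\tfrac{kh}{2}|\alpha|$. The cell average is then controlled through the $D$-norm stability of $P_h^0z$ in \cref{lem:P0-D}, which is where the $C_{T,kh}/2$ term comes from. The consistency difference is weighted by $kh/\sqrt{|\alpha|}$ and bounded with \cref{lem:boundary-average}.

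You instead put $\overline{\alpha}$ on the difference, using $|\alpha|\le kh$, and the small coefficient on the edge average. You bound the edge average directly by the continuous trace estimate $k^{1/2}\|z\|_{0,\partial\Omega}\le\|z\|_{E,k}$. Your route therefore never needs \cref{lem:P0-D}, and your constant involves only $C_{\rm stab}$ and $C_{\rm bh}$, not $C_I$ (and hence not the scaled trace constant). The paper's route costs nothing extra in context, because \cref{lem:P0-D} is needed anyway to bound $\|v_h^\star\|_D$ in \cref{thm:main-infsup}.

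One bookkeeping remark. Your first boundary piece is bounded by $(2\sqrt3+2\sqrt{kh})\,C_{\rm stab}k^2h\|w_h\|_D^2$. Since $C_I$ is unquantified, $C_{T,kh}/2$ alone does not obviously dominate this, so your proposed absorption into that term is not justified as stated. However, your second piece is at most $\sqrt3\,C_{\rm stab}k^2h\|w_h\|_D^2$, which follows from $|\alpha+ikh|=\tfrac{kh}{2}|\alpha|$ and $|\alpha|\le kh$. The two pieces together therefore fit inside $2\sqrt3C_{\rm stab}(2+kh)$, and the stated $C_{{\rm con},kh}$ holds without using the $C_{T,kh}/2$ term at all.
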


\begin{proof}
Take the complex conjugate of \eqref{eq:continuous-adjoint-a}, multiply it on each cell $t$ by the constant value $w_h|_t$, and integrate. This gives
\begin{align*}
2k^2\int_t|w_h|^2
&=-w_h|_t\int_{\partial t}\partial_{n_t}\overline z\,ds
-k^2w_h|_t\int_t\overline z\,dx.
\end{align*}
Summing over all elements, the contributions from the two cells adjacent to an interior edge combine as
\begin{align*}
-w_{t^-}\int_e\partial_{n_e}\overline z\,ds
+w_{t^+}\int_e\partial_{n_e}\overline z\,ds
&=-\llbracket w_h\rrbracket_e\overline{\int_e\partial_{n_e}z\,ds}.
\end{align*}
$\partial_nz+ik z=0$ implies $\partial_n\overline z-ik\overline z=0, $ 
and therefore $-w_h\int_e\partial_n\overline z\,ds
=-ik h\,\llbracket w_h\rrbracket_e\overline{z_e^{\rm av}}. $ 
Combining the interior and boundary contributions, we obtain
\begin{align}
2k^2\|w_h\|_{0,\Omega}^2
&=-\sum_{e\in\mathcal{E}_h^0}\llbracket w_h\rrbracket_e
\overline{\int_e\partial_{n_e}z\,ds}
-ik h\sum_{e\in\mathcal{E}_h^B}\llbracket w_h\rrbracket_e\overline{z_e^{\rm av}}
-k^2(w_h,z)_\Omega.
\label{eq:cell-identity}
\end{align}
By definition of $a_h(\cdot,\cdot)$, \eqref{eq:mass-exact} and \eqref{eq:cell-identity}, we have
\begin{align*}
 a_h(w_h,P_h^0z)-2k^2\|w_h\|_{0,\Omega}^2&=\sum_{e\in \mathcal{E}_h^0}\llbracket w_h \rrbracket_e \,\overline{\llbracket P_h^0z \rrbracket_e} 
+\alpha\sum_{e\in \mathcal{E}_h^{B}} \llbracket w_h \rrbracket_e \,\overline{\llbracket P_h^0z \rrbracket_e}-k^2( w_h,P_h^0z)_\Omega-2k^2\|w_h\|_{0,\Omega}^2\\
&=:R_1+R_2,
\end{align*}
where 
\begin{align*}
R_1
:=\sum_{e\in\mathcal{E}_h^0}\llbracket w_h\rrbracket_e
\overline{\left(\llbracket P_h^0z\rrbracket_e+\int_e\partial_{n_e}z\,ds\right)},\quad
R_2
:=\sum_{e\in\mathcal{E}_h^B}\llbracket w_h\rrbracket_e
\left(\alpha\overline{P_h^0z|_{t(e)}}+ik h\,\overline{z_e^{\rm av}}\right).
\end{align*}
By \cref{lem:flux} and \eqref{eq:z-H2-w},
\begin{align}
|R_1|
&\le 4C_{\rm stab}C_{\rm bh}k^2h\|w_h\|_D^2.
\label{eq:RI-bound}
\end{align}
For $R_2,$ we split it as
\begin{align*}
R_2
&=(\alpha+i k h)\sum_{e\in\mathcal{E}_h^B}\llbracket w_h\rrbracket_e\overline{P_h^0z|_{t(e)}}
+i k h\sum_{e\in\mathcal{E}_h^B}\llbracket w_h\rrbracket_e
\overline{(z_e^{\rm av}-P_h^0z|_{t(e)})}
\nonumber\\
&=:R_{2,1}+R_{2,2}.
\end{align*}
%
Using  \cref{lem:P0-D},
\begin{align}
|R_{2,1}|
&\leq \frac {k h}2
\left(|\alpha|\sum_{e\in\mathcal{E}_h^B}|\llbracket w_h\rrbracket_e|^2\right)^{1/2}
\left(|\alpha|\sum_{e\in\mathcal{E}_h^B}|P_h^0z|_{t(e)}|^2\right)^{1/2}
\nonumber\\
&\le \frac{C_{T,k h}k^2h}{2}\|w_h\|_D^2.
\label{eq:RB1-bound}
\end{align}
By \cref{lem:boundary-average}, \eqref{eq:z-energy-w} and \eqref{eq:z-H2-w}, we have
\begin{align*}
\sum_{e\in\mathcal{E}_h^B}|z_e^{\rm av}-P_h^0z|_{t(e)}|^2
&\le 12C_{\rm stab}^2(1+k h)k^3h
\|w_h\|_D^2.
\end{align*}
Using the Cauchy--Schwarz inequality gives
\begin{align}
|R_{2,2}|
&=|i k h\sum_{e\in\mathcal{E}_h^B}\llbracket w_h\rrbracket_e
\overline{(z_e^{\rm av}-P_h^0z|_{t(e)})}|\notag\\
&\leq \frac{k h}{\sqrt{|\alpha|}}\left(|\alpha|\sum_{e\in\mathcal{E}_h^B}|\llbracket w_h\rrbracket_e|^2\right)^{\frac{1}{2}}\left(\sum_{e\in\mathcal{E}_h^B}|z_e^{\rm av}-P_h^0z|_{t(e)}|^2\right)^{\frac{1}{2}}\notag\\
&\le \sqrt{6}C_{\rm stab}((4+(k h)^2)^\frac{1}{2}(1+k h))^{\frac{1}{2}} k^2h\|w_h\|_D^2\\
&\le  \sqrt{6}C_{\rm stab}(2+k h) k^2h\|w_h\|_D^2.
\label{eq:RB2-bound}
\end{align}
Combining \eqref{eq:RI-bound}, \eqref{eq:RB1-bound}, and \eqref{eq:RB2-bound}, we finish the proof.
\end{proof}
We can now establish the inf--sup stability of the fine-grid discretization. 

\begin{theorem}
\label{thm:main-infsup}
Assume that $C_{\rm con,k h}k^2h\leq \frac{\sqrt{2}}{4}.$ Then the sesquilinear
form $a_h(\cdot,\cdot)$ satisfies the discrete inf--sup condition
\begin{equation}
\beta_h
:=
\inf_{0\neq w_h\in W_h}
\sup_{0\neq v_h\in W_h}
\frac{|a_h(w_h,v_h)|}
{\|w_h\|_D\|v_h\|_D}
\ge
\frac{\sqrt{2}}{4(1+C_{T,k h}k)}.
\label{eq:fine-grid-infsup}
\end{equation}
\end{theorem}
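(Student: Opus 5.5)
Given $0\neq w_h\in W_h$, we will construct an explicit test function in the Schatz-type manner, using the coercive shift $b_h$ and the adjoint solution $z=z(w_h)$ of \eqref{eq:continuous-adjoint}. The candidate is
\[
v_h := \overline{e^{-\mathrm i\theta/2}}\,w_h + P_h^0 z ,
\]
or, more simply, a rotated combination chosen so that the $b_h$ part yields a positive real part. Concretely, we take $v_h = e^{-\mathrm i\theta/2}w_h + P_h^0 z$ with the phase matched to the rotation used in \cref{a_and_b}(3), so that
\[
a_h(w_h,v_h)=e^{\mathrm i\theta/2}a_h(w_h,w_h)+a_h(w_h,P_h^0z).
\]
We write $a_h(w_h,w_h)=b_h(w_h,w_h)-2k^2\|w_h\|_0^2$. \cref{lem:cell-identity} replaces $a_h(w_h,P_h^0z)$ by $2k^2\|w_h\|_0^2$ up to an error of size $C_{\rm con,kh}k^2h\|w_h\|_D^2$. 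The problem is that the $-2k^2\|w_h\|_0^2$ term carries the factor $e^{\mathrm i\theta/2}$, whereas the $+2k^2\|w_h\|_0^2$ from the lemma does not. To make these cancel, we will instead rotate the adjoint part as well and use
\[
v_h=e^{-\mathrm i\theta/2}\bigl(w_h+P_h^0z\bigr).
\]
Recall the convention that $a_h$ is antilinear in the second slot, so $a_h(w,\lambda v)=\bar\lambda a_h(w,v)$. Then
\[
a_h(w_h,v_h)=e^{\mathrm i\theta/2}\bigl[b_h(w_h,w_h)-2k^2\|w_h\|_0^2+a_h(w_h,P_h^0z)\bigr]=\widetilde b_h(w_h,w_h)+e^{\mathrm i\theta/2}R,
\]
with $|R|\le C_{\rm con,kh}k^2h\|w_h\|_D^2$.

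We then take real parts. By \eqref{eq:rotated-b-coercivity}, $\Re\widetilde b_h(w_h,w_h)\ge \frac1{\sqrt2}\|w_h\|_D^2$. Together with the assumption $C_{\rm con,kh}k^2h\le\frac{\sqrt2}{4}$, this gives
\[
|a_h(w_h,v_h)|\ge \Bigl(\tfrac{1}{\sqrt2}-\tfrac{\sqrt2}{4}\Bigr)\|w_h\|_D^2=\tfrac{\sqrt2}{4}\|w_h\|_D^2 .
\]

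Next we bound the test function. By the triangle inequality and \cref{lem:P0-D},
\[
\|v_h\|_D\le \|w_h\|_D+\|P_h^0z\|_D\le (1+C_{T,kh}k)\|w_h\|_D .
\]
Dividing the lower bound on $|a_h(w_h,v_h)|$ by $\|w_h\|_D\|v_h\|_D$ gives \eqref{eq:fine-grid-infsup}. Since $W_h$ is finite dimensional, this inf--sup bound also yields unique solvability of \eqref{bf}.

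The main point requiring care is the phase bookkeeping. The sign and conjugation conventions, namely $a_h$ antilinear in its second argument and $\widetilde b_h=e^{\mathrm i\theta/2}b_h$, must be checked so that the rotation factor multiplies both $-2k^2\|w_h\|^2$ and the adjoint term identically and the exact cancellation survives. If the conjugation came out reversed, we would simply use $e^{+\mathrm i\theta/2}$ in $v_h$; the modulus of the perturbation $R$ is unaffected by the rotation. All remaining steps are direct invocations of \cref{a_and_b}, \cref{lem:P0-D} and \cref{lem:cell-identity}.
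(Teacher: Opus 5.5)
Your argument is correct and is essentially the paper's proof. The paper takes $v_h^\star=w_h+P_h^0z$, uses Lemma~\ref{lem:cell-identity} to get $a_h(w_h,v_h^\star)=b_h(w_h,w_h)+R_1+R_2$, and concludes with $|b_h(w_h,w_h)|\ge\frac{1}{\sqrt2}\|w_h\|_D^2$ from Lemma~\ref{a_and_b}(3), the reverse triangle inequality, and Lemma~\ref{lem:P0-D} for $\|v_h^\star\|_D$. Your extra unimodular factor $e^{-\mathrm i\theta/2}$ changes neither $|a_h(w_h,v_h)|$ nor $\|v_h\|_D$, so it only repeats the rotation argument already built into that coercivity estimate, and the phase-bookkeeping concern does not arise.
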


\begin{proof}
Fix $0\neq w_h\in W_h$. Let $z$ be the solution of
\eqref{eq:continuous-adjoint} and set
$v_h^\star:=w_h+P_h^0z$. By \cref{lem:cell-identity},
\begin{align*}
a_h(w_h,P_h^0z)
=
2k^2\|w_h\|_{0,\Omega}^2+R_1+R_2,
\quad
|R_1+R_2|
\leq
C_{\rm con,k h}k^2h\|w_h\|_D^2.
\end{align*}
Hence, under the assumption  $C_{\rm con,k h}k^2h\le\frac{\sqrt{2}}{4},$ 
\begin{align}
|a_h(w_h,v_h^\star)|
&=
|b_h(w_h,w_h)+R_1+R_2|
\nonumber\\
&\geq
|b_h(w_h,w_h)|-|R_1+R_2|
\nonumber\\
&\geq
\left(\frac{1}{\sqrt{2}}-C_{\rm con,k h}k^2h\right)
\|w_h\|_D^2
\geq
\frac{\sqrt{2}}{4}\|w_h\|_D^2.
\label{eq:numerator}
\end{align}
On the other hand, \cref{lem:P0-D} gives
\begin{align}
\|v_h^\star\|_D
&\leq
\|w_h\|_D+\|P_h^0z\|_D
\le
(1+C_{T,k h}k)\|w_h\|_D.
\label{eq:denominator}
\end{align}
Combining \eqref{eq:numerator} and \eqref{eq:denominator}, we obtain
\begin{align}
\sup_{0\neq v_h\in W_h}
\frac{|a_h(w_h,v_h)|}{\|v_h\|_D}
&\geq
\frac{|a_h(w_h,v_h^\star)|}{\|v_h^\star\|_D}
\geq
\frac{\sqrt{2}}{4(1+C_{T,k h}k)}\|w_h\|_D.
\end{align}
Dividing by $\|w_h\|_D$ and taking the infimum over all
$0\neq w_h\in W_h$ yields \eqref{eq:fine-grid-infsup}, completing the proof.
\end{proof}

\begin{remark}
The assumption in \cref{thm:main-infsup} takes the form $k^2 h \le C,$ where $C>0$ is independent of $k$ and $h$. Such a mesh-resolution
condition is consistent with classical stability and quasi-optimality
analyses for low-order Helmholtz finite element discretizations
\cite{IhlenburgBabuska1995,Sauter2006}. Refined pre-asymptotic error
estimates can be found in \cite{Wu2014}. 
For the present $RT_0$--$P_0$ scheme, this condition follows directly
from the discrete inf--sup argument.
\end{remark}

As a direct consequence of the fine-grid inf--sup condition, we obtain
the following stability estimate for the discrete solution.

\begin{corollary}
\label[corollary]{fine-grid-stability}
Under the assumptions of \cref{thm:main-infsup}, let
$u_h\in W_h$ be the solution of \eqref{bf}. Then
\begin{align*}
    \|u_h\|_D
\leq
2\sqrt{2}\left[(C_{T,k h}+k^{-1})\|f\|_{0,\Omega}
+
(C_{T,k h}k^{1/2}+k^{-\frac{1}{2}})\|g\|_{0,\partial\Omega}\right].
\end{align*}
\end{corollary}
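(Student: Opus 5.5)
The plan is to combine the fine-grid inf--sup condition of \cref{thm:main-infsup} with a bound on the dual norm of the right-hand side of \eqref{bf}. Since $\beta_h>0$, the problem \eqref{bf} is uniquely solvable, because $W_h$ is finite dimensional. Moreover, for its solution $u_h$,
\begin{align*}
\|u_h\|_D
\le
\frac{1}{\beta_h}\sup_{0\neq v_h\in W_h}\frac{|a_h(u_h,v_h)|}{\|v_h\|_D}
=
\frac{1}{\beta_h}\sup_{0\neq v_h\in W_h}\frac{|\ell(v_h)|}{\|v_h\|_D},
\end{align*}
where $\ell(v_h):=(f,v_h)_\Omega+\frac{2i}{kh+2i}\langle g,v_h\rangle_{\partial\Omega}$. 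By \eqref{eq:fine-grid-infsup}, $\beta_h^{-1}\le 2\sqrt2\,(1+C_{T,kh}k)$. It therefore suffices to show
\begin{align*}
|\ell(v_h)|\le \bigl(k^{-1}\|f\|_{0,\Omega}+k^{-1/2}\|g\|_{0,\partial\Omega}\bigr)\|v_h\|_D .
\end{align*}
Multiplying out $2\sqrt2(1+C_{T,kh}k)(k^{-1}\|f\|_{0,\Omega}+k^{-1/2}\|g\|_{0,\partial\Omega})$ then gives exactly the constants $C_{T,kh}+k^{-1}$ and $C_{T,kh}k^{1/2}+k^{-1/2}$ in the statement.

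\emph{Volume term.} By Cauchy--Schwarz and $k\|v_h\|_{0,\Omega}\le\|v_h\|_D$, we get $|(f,v_h)_\Omega|\le k^{-1}\|f\|_{0,\Omega}\|v_h\|_D$.

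\emph{Boundary term.} This is the only step needing care, because the $D$-norm controls boundary values only through the weight $|\alpha|$. Since $v_h$ is piecewise constant, its trace on a boundary edge $e$ equals $\llbracket v_h\rrbracket_e$. Hence
\begin{align*}
|\langle g,v_h\rangle_{\partial\Omega}|
\le
\sum_{e\in\mathcal E_h^B}\|g\|_{0,e}\,h^{1/2}|\llbracket v_h\rrbracket_e|
\le
\|g\|_{0,\partial\Omega}\Bigl(\frac{h}{|\alpha|}\Bigr)^{1/2}\Bigl(|\alpha|\sum_{e\in\mathcal E_h^B}|\llbracket v_h\rrbracket_e|^2\Bigr)^{1/2}.
\end{align*}
Using $h/|\alpha|=\sqrt{(kh)^2+4}/(2k)$ and $\bigl|\frac{2i}{kh+2i}\bigr|=2/\sqrt{(kh)^2+4}$, the boundary contribution is bounded by
\begin{align*}
\Bigl(\frac{2}{\sqrt{(kh)^2+4}}\Bigr)^{1/2}k^{-1/2}\|g\|_{0,\partial\Omega}\|v_h\|_D
\le k^{-1/2}\|g\|_{0,\partial\Omega}\|v_h\|_D ,
\end{align*}
since $2/\sqrt{(kh)^2+4}\le1$.

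Adding the two bounds and inserting them into the inf--sup estimate completes the argument. The only delicate point is the $|\alpha|$-weighting in the boundary term: the factor $h/|\alpha|$ must combine with the prefactor $2/|kh+2i|$ into a quantity bounded by $k^{-1/2}$ uniformly in $kh$, which the computation above confirms.
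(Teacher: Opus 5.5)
Your proof is correct and follows the paper's argument step for step. You bound $\beta_h^{-1}\le 2\sqrt2\,(1+C_{T,kh}k)$ via the fine-grid inf--sup condition, then show the right-hand side functional has dual norm at most $k^{-1}\|f\|_{0,\Omega}+k^{-1/2}\|g\|_{0,\partial\Omega}$, handling the boundary term through $\|v_h\|_{0,\partial\Omega}^2\le (h/|\alpha|)\|v_h\|_D^2$. The only difference is cosmetic: you evaluate the prefactor $\bigl|\frac{2i}{kh+2i}\bigr|(h/|\alpha|)^{1/2}$ explicitly, whereas the paper writes $\bigl|\frac{2i}{kh+2i}\bigr|=|\alpha|/(kh)$ and uses $|\alpha|\le kh$.
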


\begin{proof} The well-posedness of the discrete problem follows from Lemma \ref{a_and_b} and Theorem \ref{thm:main-infsup} directly and we focus on the stability estimate in the following.

Using \cref{thm:main-infsup} and \eqref{bf}, we have
\begin{align}
\frac{\sqrt{2}}{4(1+C_{T,k h}k)}\|u_h\|_D
\leq
\sup_{0\neq p_h\in W_h}
\frac{|a_h(u_h,p_h)|}{\|p_h\|_D}
\leq\sup_{0\neq p_h\in W_h}
\frac{|(f,p_h)_\Omega
+
\frac{2i}{k h+2i}
(g,p_h)_{\partial\Omega}|}{\|p_h\|_D}.
\label{eq:uh-stability-1}
\end{align}
Recall that $\left|\frac{2i}{k h+2i}\right|
=
\frac{|\alpha|}{k h},
$ and $
|\alpha|\leq k h.$
Since $p_h$ is piecewise constant and
$k\|p_h\|_{0,\Omega}\leq\|p_h\|_D$, we have $\|p_h\|_{0,\partial\Omega}^2
=
h\sum_{e\in\mathcal E_h^B}
|\llbracket p_h\rrbracket_e|^2
\leq
\frac{h}{|\alpha|}\|p_h\|_D^2.$ Therefore, we have
\begin{align}
|(f,p_h)_\Omega
+
\frac{2i}{k h+2i}
(g,p_h)_{\partial\Omega}|
&\leq
\|f\|_{0,\Omega}\|p_h\|_{0,\Omega}
+
\frac{|\alpha|}{k h}
\|g\|_{0,\partial\Omega}
\|p_h\|_{0,\partial\Omega}
\nonumber\\
&\leq
\left(
k^{-1}\|f\|_{0,\Omega}
+
\frac{\sqrt{|\alpha|}}{k\sqrt h}
\|g\|_{0,\partial\Omega}
\right)\|p_h\|_D
\nonumber\\
&\leq
\left(
k^{-1}\|f\|_{0,\Omega}
+
k^{-1/2}\|g\|_{0,\partial\Omega}
\right)\|p_h\|_D.
\label{eq:uh-rhs-estimate}
\end{align}
Substituting \eqref{eq:uh-rhs-estimate} into
\eqref{eq:uh-stability-1} yields
\begin{align*}
\|u_h\|_D
\leq
2\sqrt{2}\left[(C_{T,k h}+k^{-1})\|f\|_{0,\Omega}
+
(C_{T,k h}k^{1/2}+k^{-\frac{1}{2}})\|g\|_{0,\partial\Omega}\right].
\end{align*}
\end{proof}


\subsection{Inf--Sup Condition for the Multiscale Discretization}

We now turn to the corresponding stability analysis of the multiscale
discretization. The key step is to approximate the fine-grid adjoint
solution in the multiscale space.

For each $w_{\rm ms}\in W_{\rm ms}$, consider the fine-grid adjoint
problem: find $\eta_h=\eta_h(w_{\rm ms})\in W_h$ such that
\begin{equation}
a_h(p_h,\eta_h)
=
2k^2(p_h,w_{\rm ms})_\Omega,
\qquad
\forall p_h\in W_h.
\label{eq:ideal-adjoint}
\end{equation}
The fine-grid inf--sup condition guarantees the unique solvability of
\eqref{eq:ideal-adjoint}. Let $\Br_h$ denote the flux associated with
$\eta_h$ through the fine-grid mixed formulation.

To localize the adjoint solution, we first identify the discrete trace
induced by restricting the global mixed equations to an oversampling
region. For an artificial boundary edge
$e\subset\partial T_i^m\setminus\partial\Omega$, let
$t_e^-,t_e^+\in\mathcal T_h$ denote the two fine-grid elements adjacent
to $e$, with $t_e^-\subset T_i^m$ and
$t_e^+\subset\Omega\setminus T_i^m$, and define
\[
\{\eta_h\}_e
:=
\frac12
\left(
\eta_h|_{t_e^-}+\eta_h|_{t_e^+}
\right).
\]
Throughout the local problems, $\Bn_i$ denotes the outward unit normal
of $T_i^m$. By the interior-edge flux relation,
\begin{equation}
\{\eta_h\}_e
=
\eta_h|_{t_e^-}
-\frac h2(\Br_h\cdot\Bn_i)|_e.
\label{eq:numerical-interface-traces}
\end{equation}
Thus, $\{\eta_h\}_e$ is the numerical trace inherited from the global
mixed discretization and should not be confused with the one-sided
boundary value of a function defined only on $T_i^m$.

With this convention, the restriction
$(\Br_h,\eta_h)|_{T_i^m}$ satisfies
\begin{subequations}
\label{eq:adjoint-local-restriction}
\begin{align}
(\Bv_h,\Br_h)_{Q,T_i^m}
-(\nabla\cdot\Bv_h,\eta_h)_{T_i^m}
&+
\frac{i}{k}
\left\langle
\Bv_h\cdot\Bn_i,\Br_h\cdot\Bn_i
\right\rangle_{\partial T_i^m\cap\partial\Omega}
\nonumber\\
&+
\left\langle
\Bv_h\cdot\Bn_i,\{\eta_h\}
\right\rangle_{\partial T_i^m\setminus\partial\Omega}
=0,
\label{eq:adjoint-local-restriction-a}
\\
(p_h,\nabla\cdot\Br_h)_{T_i^m}
-(p_h,k^2\eta_h)_{T_i^m}
&=
(p_h,2k^2w_{\rm ms})_{T_i^m},
\label{eq:adjoint-local-restriction-b}
\end{align}
\end{subequations}
for all
$(\Bv_h,p_h)\in\BV_h(T_i^m)\times W_h(T_i^m)$.
Indeed, the second equation follows from the associated fine-grid
mixed adjoint system element by element. The relations on interior
and physical boundary edges in the first equation are inherited
directly from the global mixed system. On an artificial boundary
edge, setting $r_e=h(\Br_h\cdot\Bn_i)|_e$, the corresponding local
edge equation becomes
\[
\frac12\overline{r_e}
-\overline{\eta_h|_{t_e^-}}
+\overline{\{\eta_h\}_e}
=0
\]
by \eqref{eq:numerical-interface-traces}. Hence
\eqref{eq:adjoint-local-restriction} is precisely the local discrete
problem satisfied by the restriction of the global adjoint solution.

The following scale-resolution condition will be used for the local
stability estimates and the subsequent multiscale inf--sup analysis.

\begin{assumption}[Scale Resolution (SR) Assumption]
\label{assumption}
For the local estimates, $\mathcal T_H$ is a uniform square
coarsening of the Cartesian fine grid, with coarse side length $H$. 
We assume that the coarse mesh size $H$ is sufficiently small such that
\[
k(H+2mh)<1.
\]
\end{assumption}

We now decompose the restriction of the global adjoint solution into
a source-driven particular component and a locally homogeneous
component carrying the inherited boundary data. For each
$T_i\in\mathcal T_H$, let
$(\Br_{h,T_i^m}^{\mathrm I},
\eta_{h,T_i^m}^{\mathrm I})
\in\BV_h(T_i^m)\times W_h(T_i^m)$
be the solution of
\begin{subequations}
\label{adjoint-decom-I}
\begin{align}
(\Bv_h,\Br_{h,T_i^m}^{\mathrm I})_{Q,T_i^m}
-(\nabla\cdot\Bv_h,\eta_{h,T_i^m}^{\mathrm I})_{T_i^m}
&=0,
\label{eq:adjoint-decom-I-a}
\\
(p_h,\nabla\cdot\Br_{h,T_i^m}^{\mathrm I})_{T_i^m}
-(p_h,k^2\eta_{h,T_i^m}^{\mathrm I})_{T_i^m}
&=
(p_h,2k^2w_{\rm ms})_{T_i^m},
\label{eq:adjoint-decom-I-b}
\end{align}
\end{subequations}
for all
$(\Bv_h,p_h)\in\BV_h(T_i^m)\times W_h(T_i^m)$.
This component accounts solely for the local response to the volume
source $2k^2w_{\rm ms}$, with homogeneous Dirichlet data imposed
through the mixed formulation. Its well-posedness is established in
\Cref{first_comp} below.

We define the second component as the remainder
\[
(\Br_{h,T_i^m}^{\mathrm{II}},
\eta_{h,T_i^m}^{\mathrm{II}})
:=
(\Br_h,\eta_h)|_{T_i^m}
-
(\Br_{h,T_i^m}^{\mathrm I},
\eta_{h,T_i^m}^{\mathrm I}).
\]
Subtracting \eqref{adjoint-decom-I} from
\eqref{eq:adjoint-local-restriction} yields
\begin{subequations}
\label{adjoint-decom-II}
\begin{align}
(\Bv_h,\Br_{h,T_i^m}^{\mathrm{II}})_{Q,T_i^m}
-(\nabla\cdot\Bv_h,\eta_{h,T_i^m}^{\mathrm{II}})_{T_i^m}
&+
\frac{i}{k}
\left\langle
\Bv_h\cdot\Bn_i,\Br_h\cdot\Bn_i
\right\rangle_{\partial T_i^m\cap\partial\Omega}
\nonumber\\
&+
\left\langle
\Bv_h\cdot\Bn_i,\{\eta_h\}
\right\rangle_{\partial T_i^m\setminus\partial\Omega}
=0,
\label{eq:adjoint-decom-II-a}
\\
(p_h,\nabla\cdot\Br_{h,T_i^m}^{\mathrm{II}})_{T_i^m}
-(p_h,k^2\eta_{h,T_i^m}^{\mathrm{II}})_{T_i^m}
&=0,
\label{eq:adjoint-decom-II-b}
\end{align}
\end{subequations}
for all
$(\Bv_h,p_h)\in\BV_h(T_i^m)\times W_h(T_i^m)$.
Hence the second component is locally homogeneous in the entire
pressure space $W_h(T_i^m)$ and contains only the boundary information
inherited from the global adjoint solution. It is this component that
will subsequently be related to the local snapshot space.

Restricting the two local components to the target coarse element, we
define, for $s\in\{\mathrm I,\mathrm{II}\}$,
\begin{equation}
\Br_h^s|_{T_i}
:=
\Br_{h,T_i^m}^s|_{T_i},
\qquad
\eta_h^s|_{T_i}
:=
\eta_{h,T_i^m}^s|_{T_i}.
\label{decom1.2}
\end{equation}
Consequently,
\begin{equation}
\Br_h
=
\Br_h^{\mathrm I}+\Br_h^{\mathrm{II}},
\qquad
\eta_h
=
\eta_h^{\mathrm I}+\eta_h^{\mathrm{II}}.
\label{adjoint-global-decomposition}
\end{equation}

We first estimate the source-driven component.

\begin{lemma}
\label{first_comp}
Let Assumption~\ref{assumption} hold. Then
\eqref{adjoint-decom-I} is uniquely solvable and
\begin{equation}
\left(
\|\Br_{h,T_i^m}^{\mathrm I}\|_{Q,T_i^m}^{2}
+
k^2\|\eta_{h,T_i^m}^{\mathrm I}\|_{0,T_i^m}^{2}
\right)^{1/2}
\le
\frac{2\sqrt5}{3}\,
k^2(H+2mh)
\|w_{\rm ms}\|_{0,T_i^m}.
\label{eq:first-comp-energy}
\end{equation}
Consequently,
\begin{equation}
\|\eta_{h,T_i^m}^{\mathrm I}\|_{D,T_i^m}
\le
\frac{2\sqrt5}{3}\,
k^2(H+2mh)
\|w_{\rm ms}\|_{0,T_i^m}.
\label{eq:first-comp-D}
\end{equation}
\end{lemma}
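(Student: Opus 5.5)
The plan is a discrete energy argument for \eqref{adjoint-decom-I}, closed by a discrete Friedrichs inequality on $T_i^m$. The key input is that $T_i^m$ is a square of side $L:=H+2mh$ (Assumption~\ref{assumption}) with $kL<1$.

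\emph{Step 1: velocity elimination and the energy identity.} Proceeding as in Section~\ref{sec-2}, test \eqref{eq:adjoint-decom-I-a} with $\Bv_h=\Bv_e$ for each $e\in\mathcal E_{T_i^m}$.
\begin{itemize}
\item On interior edges of $T_i^m$ this gives $r_e=\llbracket\eta^{\rm I}\rrbracket_e$, as in \eqref{q_u_jump}.
\item On edges of $\partial T_i^m$ there is no boundary term, which is the homogeneous Dirichlet condition imposed weakly. The edge equation then reads $\tfrac12 r_e=\eta^{\rm I}|_{t(e)}$ up to orientation, i.e.\ $r_e$ is a one-sided ``jump'' against the exterior value $0$ (with weight $2$).
\end{itemize}
Next take $\Bv_h=\Br^{\rm I}$ in \eqref{eq:adjoint-decom-I-a} and $p_h=\eta^{\rm I}$ in \eqref{eq:adjoint-decom-I-b}, conjugate the second equation where necessary, and add. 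The divergence terms cancel, leaving
\[
\|\Br^{\rm I}\|_{Q,T_i^m}^2-k^2\|\eta^{\rm I}\|_{0,T_i^m}^2=2k^2(\eta^{\rm I},w_{\rm ms})_{T_i^m}
\]
up to a sign on the right-hand side. This identity is indefinite, which is why Step 2 is needed.

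\emph{Step 2: discrete Friedrichs inequality (main obstacle).} I will prove
\[
\|\eta^{\rm I}\|_{0,T_i^m}\le \tfrac{L}{2}\|\Br^{\rm I}\|_{Q,T_i^m}.
\]
\begin{itemize}
\item Work row by row (and column by column) of fine cells in $T_i^m$. Along a row of $n=L/h$ cells, write the cell value $\eta_t$ as the telescoping sum of the edge quantities $r_e$ from the nearer end of the row. The boundary edge contributes $\tfrac12 r_e$, which is consistent with the factor $\tfrac12$ in the trapezoidal weight.
\item Apply Cauchy--Schwarz, using that the distance to the nearer end is at most $L/2$, and sum $h^2|\eta_t|^2$ over the row.
\item Average the $x$- and $y$-direction bounds. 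Each edge carries weight $\tfrac12$ from each adjacent cell in $(\cdot,\cdot)_Q$, so the edge sums reassemble into $\|\Br^{\rm I}\|_Q^2$ with a constant $L^2/4$.
\end{itemize}
The delicate point is the bookkeeping of the boundary weight $\tfrac12$ and of the $h$-scaling of $r_e$, so that the constant comes out exactly $\tfrac12$. If the naive telescoping gives a slightly worse constant, I will use the sharper weighted sum $\sum_j j h^2$ over half-rows, which is bounded by $(L/2)^2/2$ per half-row.

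\emph{Step 3: closing the estimate.} Write $X=\|\Br^{\rm I}\|_Q$. By Step 2, $k^2\|\eta^{\rm I}\|^2\le\tfrac14 k^2L^2X^2$, and the right-hand side of the identity is at most $k^2L\|w_{\rm ms}\|X$. Hence
\[
X^2\bigl(1-\tfrac14k^2L^2\bigr)\le k^2L\|w_{\rm ms}\|_{0,T_i^m}X .
\]
Since $kL<1$, this gives $X\le\tfrac43k^2L\|w_{\rm ms}\|_{0,T_i^m}$. Then
\[
X^2+k^2\|\eta^{\rm I}\|^2\le\bigl(1+\tfrac14\bigr)X^2,
\]
and $\sqrt{5}/2\cdot\tfrac43=\tfrac{2\sqrt5}{3}$, which is \eqref{eq:first-comp-energy}. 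Taking $w_{\rm ms}=0$ gives $X=0$ and then $\eta^{\rm I}=0$. So the square linear system is injective, hence uniquely solvable.

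\emph{Step 4: the $D$-norm bound.} For \eqref{eq:first-comp-D}, compare $\|\eta^{\rm I}\|_{D,T_i^m}^2$ term by term with the energy. Interior jumps equal $r_e$ and carry the same weight as in $\|\cdot\|_Q^2$. On boundary edges the contribution $\alpha_e|\llbracket\eta^{\rm I}\rrbracket_e|^2$ is bounded by the corresponding $Q$-contribution: use $\alpha_e\le1$ together with $|\eta_t|=\tfrac12|r_e|$ against the weight $\tfrac12$ in $(\cdot,\cdot)_{Q}$. This yields $\|\eta^{\rm I}\|_{D,T_i^m}^2\le\|\Br^{\rm I}\|_Q^2+k^2\|\eta^{\rm I}\|^2$, from which \eqref{eq:first-comp-D} follows.
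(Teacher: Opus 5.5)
Your proposal is correct and follows the paper's proof essentially step for step. The shared steps are:
\begin{enumerate}
\item the edge relations $r_e=\llbracket\eta_{h,T_i^m}^{\mathrm I}\rrbracket_e$ on interior edges and $r_e=2\eta_{h,T_i^m}^{\mathrm I}|_{t_e}$ on $\partial T_i^m$;
\item the energy identity;
\item the row/column discrete Friedrichs bound $\|\eta_{h,T_i^m}^{\mathrm I}\|_{0,T_i^m}\le\frac{H+2mh}{2}\|\Br_{h,T_i^m}^{\mathrm I}\|_{Q,T_i^m}$;
\item closure via $k(H+2mh)<1$;
\item injectivity of the square system.
\end{enumerate}

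The differences are only technical. The paper uses the symmetric identity $2v_j=v_1+v_n+\sum_{\ell<j}(v_{\ell+1}-v_\ell)-\sum_{\ell\ge j}(v_{\ell+1}-v_\ell)$ instead of telescoping from the nearer end. This sidesteps your odd-$n$ and boundary-weight bookkeeping. Note also that patches clipped by $\partial\Omega$ are rectangles with sides at most $H+2mh$, not squares. Your Step 4 explicitly bounds the boundary term $\alpha_e|\llbracket\eta_{h,T_i^m}^{\mathrm I}\rrbracket_e|^2\le 2|\eta_{h,T_i^m}^{\mathrm I}|_{t_e}|^2$. The paper's displayed local $D$-norm omits that term without comment, so your version is the more careful one.
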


\begin{proof}
We write $\Br_{h,T_i^m}^{\mathrm I}
=
\sum_{e\in\mathcal E_{T_i^m}}
r_{h,T_i^m,e}^{\mathrm I}\Bv_e,$ where $r_{h,T_i^m,e}^{\mathrm I}$ denotes the coefficient associated
with the edge $e$. Testing
\eqref{eq:adjoint-decom-I-a} with $\Bv_e$ and conjugating the resulting
scalar identity gives
\[
r_{h,T_i^m,e}^{\mathrm I}
=
\begin{cases}
\llbracket\eta_{h,T_i^m}^{\mathrm I}\rrbracket_e,
&
e\in\mathcal E_{T_i^m}^{0},
\\[1mm]
2\,\eta_{h,T_i^m}^{\mathrm I}|_{t_e},
&
e\in\mathcal E_{T_i^m}^{B},
\end{cases}
\]
where, for $e\in\mathcal E_{T_i^m}^{B}$,
$t_e\in\mathcal T_h$ is the unique fine-grid element contained in
$T_i^m$ such that $e\subset\partial t_e$.

Using the local trapezoidal quadrature rule, an interior edge receives
two contributions of weight $1/2$, whereas a boundary edge receives
only one. Therefore,
\begin{align}
\|\Br_{h,T_i^m}^{\mathrm I}\|_{Q,T_i^m}^{2}
&=
\sum_{e\in\mathcal E_{T_i^m}^{0}}
\left|
\llbracket\eta_{h,T_i^m}^{\mathrm I}\rrbracket_e
\right|^{2}
+
2
\sum_{e\in\mathcal E_{T_i^m}^{B}}
\left|
\eta_{h,T_i^m}^{\mathrm I}|_{t_e}
\right|^{2}.
\label{q_u_condition}
\end{align}

We next derive an estimate for the $L^2$-norm. Let
$v_h\in W_h(T_i^m)$ and denote its values on the fine cells by
$v_{j,r}$, where $1\le j\le n_x$ and $1\le r\le n_y$.
Since both side lengths of $T_i^m$ are bounded by $H+2mh$,
$n_xh,n_yh\le H+2mh$. Along each row,
\[
2v_{j,r}
=
v_{1,r}+v_{n_x,r}
+
\sum_{\ell=1}^{j-1}
(v_{\ell+1,r}-v_{\ell,r})
-
\sum_{\ell=j}^{n_x-1}
(v_{\ell+1,r}-v_{\ell,r}),
\]
and hence the Cauchy--Schwarz inequality gives
\[
|v_{j,r}|^2
\le
|v_{1,r}|^2+|v_{n_x,r}|^2
+
\frac{n_x-1}{2}
\sum_{\ell=1}^{n_x-1}
|v_{\ell+1,r}-v_{\ell,r}|^2.
\]
Multiplying by $h^2$, summing over all cells, repeating the argument
in the vertical direction, and averaging the two estimates yields
\begin{align}
\|v_h\|_{0,T_i^m}^{2}
&\le
\frac{(H+2mh)^2}{4}
\sum_{e\in\mathcal E_{T_i^m}^{0}}
|\llbracket v_h\rrbracket_e|^2+
\frac{h(H+2mh)}{2}
\sum_{e\in\mathcal E_{T_i^m}^{B}}
|v_h|_{t_e}|^2
\notag\\
&\le
\frac{(H+2mh)^2}{4}
\left[
\sum_{e\in\mathcal E_{T_i^m}^{0}}
|\llbracket v_h\rrbracket_e|^2
+
2\sum_{e\in\mathcal E_{T_i^m}^{B}}
|v_h|_{t_e}|^2
\right].
\label{eq:adjoint-dirichlet-poincare}
\end{align}
Applying \eqref{eq:adjoint-dirichlet-poincare} to
$\eta_{h,T_i^m}^{\mathrm I}$ and using
\eqref{q_u_condition}, we obtain
\begin{equation}
\|\eta_{h,T_i^m}^{\mathrm I}\|_{0,T_i^m}
\le
\frac{H+2mh}{2}
\|\Br_{h,T_i^m}^{\mathrm I}\|_{Q,T_i^m}.
\label{eq:first-comp-L2-flux}
\end{equation}
Taking
$\Bv_h=\Br_{h,T_i^m}^{\mathrm I}$ and
$p_h=\eta_{h,T_i^m}^{\mathrm I}$ in
\eqref{adjoint-decom-I}, and taking real parts, gives
\begin{align}
\|\Br_{h,T_i^m}^{\mathrm I}\|_{Q,T_i^m}^{2}
-
k^2\|\eta_{h,T_i^m}^{\mathrm I}\|_{0,T_i^m}^{2}
&=
\operatorname{Re}
\bigl(
\eta_{h,T_i^m}^{\mathrm I},
2k^2w_{\rm ms}
\bigr)_{T_i^m}
\notag\\
&\le
2k^2
\|w_{\rm ms}\|_{0,T_i^m}
\|\eta_{h,T_i^m}^{\mathrm I}\|_{0,T_i^m}.
\label{q_u}
\end{align}
Combining \eqref{eq:first-comp-L2-flux} and \eqref{q_u} gives
\[
\left(
1-\frac{k^2(H+2mh)^2}{4}
\right)
\|\Br_{h,T_i^m}^{\mathrm I}\|_{Q,T_i^m}^{2}
\le
k^2(H+2mh)
\|w_{\rm ms}\|_{0,T_i^m}
\|\Br_{h,T_i^m}^{\mathrm I}\|_{Q,T_i^m}.
\]
Assumption~\ref{assumption} implies $k(H+2mh)<1$; hence
$1-k^2(H+2mh)^2/4\ge3/4$, and therefore
\begin{align*}
\|\Br_{h,T_i^m}^{\mathrm I}\|_{Q,T_i^m}
\le
\frac43
k^2(H+2mh)
\|w_{\rm ms}\|_{0,T_i^m}.
\end{align*}
Furthermore, \eqref{eq:first-comp-L2-flux} gives
\[
k\|\eta_{h,T_i^m}^{\mathrm I}\|_{0,T_i^m}
\le
\frac{k(H+2mh)}{2}
\|\Br_{h,T_i^m}^{\mathrm I}\|_{Q,T_i^m}
\le
\frac12
\|\Br_{h,T_i^m}^{\mathrm I}\|_{Q,T_i^m}.
\]
Consequently,
\begin{align*}
\left(
\|\Br_{h,T_i^m}^{\mathrm I}\|_{Q,T_i^m}^{2}
+
k^2\|\eta_{h,T_i^m}^{\mathrm I}\|_{0,T_i^m}^{2}
\right)^{1/2}
&\le
\frac{\sqrt5}{2}
\|\Br_{h,T_i^m}^{\mathrm I}\|_{Q,T_i^m}
\\
&\le
\frac{2\sqrt5}{3}
k^2(H+2mh)
\|w_{\rm ms}\|_{0,T_i^m}.
\end{align*}
Finally, using the definition of the local $D$-norm,
\[
\|\eta_{h,T_i^m}^{\mathrm I}\|_{D,T_i^m}^{2}
=
\sum_{e\in\mathcal E_{T_i^m}^{0}}
\left|
\llbracket\eta_{h,T_i^m}^{\mathrm I}\rrbracket_e
\right|^2
+
k^2
\|\eta_{h,T_i^m}^{\mathrm I}\|_{0,T_i^m}^{2},
\]
and \eqref{q_u_condition}, we have
\[
\|\eta_{h,T_i^m}^{\mathrm I}\|_{D,T_i^m}^{2}
\le
\|\Br_{h,T_i^m}^{\mathrm I}\|_{Q,T_i^m}^{2}
+
k^2
\|\eta_{h,T_i^m}^{\mathrm I}\|_{0,T_i^m}^{2},
\]
which proves \eqref{eq:first-comp-D}.

For a zero right-hand side, the same argument gives
$\Br_{h,T_i^m}^{\mathrm I}=0$ and
$\eta_{h,T_i^m}^{\mathrm I}=0$.
Since \eqref{adjoint-decom-I} is a finite-dimensional square system,
this injectivity proves its unique solvability.
\end{proof}

We next consider the second component
$(\Br_{h,T_i^m}^{\mathrm{II}},
\eta_{h,T_i^m}^{\mathrm{II}})$.
The following lemma shows that this component can be represented by
functions in the local snapshot space.

\begin{lemma}
\label{thm:adjoint-II-snapshot-representation}
For each $T_i\in\mathcal T_H$, let
$(\Br_{h,T_i^m}^{\mathrm{II}},
\eta_{h,T_i^m}^{\mathrm{II}})
\in \BV_h(T_i^m)\times W_h(T_i^m)$
be the solution to \eqref{adjoint-decom-II}.
Then there exist coefficients $c_{i,j}\in\mathbb C$,
$j=1,\ldots,J_i$, such that
\begin{align*}
    \Br_{h,T_i^m}^{\mathrm{II}}
=
\sum_{j=1}^{J_i} c_{i,j}\boldsymbol\Phi_i^j,
\qquad
\eta_{h,T_i^m}^{\mathrm{II}}
=
\sum_{j=1}^{J_i} c_{i,j}\psi_i^j.
\end{align*}
In particular, $\eta_{h,T_i^m}^{\mathrm{II}}
\in W_{i,m}^{\rm snap}.$
\end{lemma}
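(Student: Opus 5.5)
The plan is to prove the representation by linear algebra on the local discrete system. Both the snapshots $(\boldsymbol\Phi_i^j,\psi_i^j)$ and the component $(\Br_{h,T_i^m}^{\mathrm{II}},\eta_{h,T_i^m}^{\mathrm{II}})$ satisfy the same homogeneous discrete equations in the interior of $T_i^m$. They differ only in the closure relation imposed on each fine boundary edge $e\in\mathcal E_{T_i^m}^B$. So I will encode every such closure by a scalar ``defect'' and show that the snapshot defects span all possible defect vectors.

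\textbf{The solution space $\mathcal S$.} Let $\mathcal S\subset \BV_h(T_i^m)\times W_h(T_i^m)$ be the set of pairs $(\boldsymbol\Phi,\psi)$ that satisfy:
\begin{itemize}
\item the second (cell) equation with zero right-hand side, for all $p_h\in W_h(T_i^m)$;
\item the first equation tested with $\Bv_e$ for every interior edge $e\in\mathcal E_{T_i^m}^0$, which gives $\phi_e=\llbracket\psi\rrbracket_e$.
\end{itemize}
Both the snapshots and the component II lie in $\mathcal S$, by \eqref{adjoint-decom-II} tested with interior $\Bv_e$.

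\textbf{The defect map.} For $e\in\mathcal E_{T_i^m}^B$, test the first equation of \cref{well_posed_of_snap} with $\Bv_e$, keeping only the impedance term. This yields an affine edge relation of the form
\[
\tfrac12\phi_e+\tfrac{i}{k h}\phi_e-\psi|_{t_e}=0 .
\]
Define the linear map $L:\mathcal S\to\mathbb C^{J_i}$ by letting $(L(\boldsymbol\Phi,\psi))_e$ be the left-hand side of this homogeneous impedance relation.

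\textbf{Defects of the two kinds of functions.}
\begin{itemize}
\item For the snapshot $\psi_i^j$, the impedance closure holds on every boundary edge except $e_i^j$, so $L(\boldsymbol\Phi_i^j,\psi_i^j)=\sigma_j\,\delta_{e_i^j}$ for some scalar $\sigma_j$.
\item For the component II, physical boundary edges carry exactly the homogeneous impedance relation, inherited from the global system as in \eqref{eq:adjoint-decom-II-a}. Hence $L$ vanishes there. On artificial edges $L$ takes some value determined by $\{\eta_h\}_e$ and $\Br_h\cdot\Bn_i$; what that value is does not matter.
\end{itemize}

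\textbf{Key steps.}
\begin{enumerate}
\item Show $\ker L=\{0\}$. An element of $\ker L$ solves the fully homogeneous local impedance problem on $T_i^m$. Test the mixed system with $(\boldsymbol\Phi,\psi)$ itself and take imaginary parts, as in the proof of \cref{well_posed_of_snap} in Appendix~\ref{proof_A}. This gives $\phi_e=0$ on $\mathcal E_{T_i^m}^B$, hence $\psi|_{t_e}=0$ for all boundary cells. The interior flux relations $\phi_e=\llbracket\psi\rrbracket_e$ and the cell equations $\sum_{e\in\partial t}\phi_eS_{et}=k^2h^2\psi|_t$ then propagate zero values inward layer by layer, a discrete unique continuation on the Cartesian grid.
\item Show $\sigma_j\neq0$. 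If $\sigma_j=0$, step 1 would force $\psi_i^j\equiv0$. But $\psi_i^j$ has Dirichlet datum $1$ on $e_i^j$, which enters its edge equation through $\langle\zeta,\Bv_e\cdot\Bn\rangle$. I would check that this datum cannot be attained by the zero pair: the $e_i^j$ edge equation becomes $\tfrac12\cdot0-0+1=0$, a contradiction.
\item Conclude. Since the vectors $\sigma_j\delta_{e_i^j}$ span $\mathbb C^{J_i}$, choose $c_{i,j}=\bigl(L(\Br^{\mathrm{II}},\eta^{\mathrm{II}})\bigr)_{e_i^j}/\sigma_j$. Then $(\Br^{\mathrm{II}},\eta^{\mathrm{II}})-\sum_j c_{i,j}(\boldsymbol\Phi_i^j,\psi_i^j)\in\ker L=\{0\}$, which gives both claimed identities at once. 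In particular $\eta_{h,T_i^m}^{\mathrm{II}}\in W_{i,m}^{\rm snap}$.
\end{enumerate}

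\textbf{Main obstacle.} The delicate step is step 1 together with the precise bookkeeping of the boundary-edge relations. The Dirichlet edge $e_i^j$, artificial edges and physical edges all use different coefficients, involving $\alpha$ versus $\tfrac12+\tfrac{i}{kh}$. The defect $L$ must be defined so that it is a single linear functional per edge that is compatible with all three cases. If the energy argument alone is awkward, I would instead count dimensions: $\dim\mathcal S=J_i$, since the unknowns are the cell values plus the boundary fluxes and the constraints are one equation per cell. Then I would use uniqueness from \cref{well_posed_of_snap} to show that $L$ is surjective and hence bijective.
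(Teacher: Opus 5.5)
Your argument is correct, but it organizes the linear algebra differently from the paper.

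\textbf{The paper's route.} It rewrites \eqref{adjoint-decom-II} as a full-boundary impedance problem whose datum $g_i$ is built from the known solution, so that every boundary term, whatever its form, becomes data. It splits $g_i$ edge by edge and introduces an auxiliary impedance solution for each edge datum. These are summed using uniqueness of the local impedance problem. Finally, each auxiliary solution is identified with $c_{i,j}(\boldsymbol\Phi_i^j,\psi_i^j)$, where $c_{i,j}=\frac{i}{k}(\Br_{i,j}\cdot\Bn_i+g_{i,j})|_{e_i^j}$, through the uniqueness in \cref{well_posed_of_snap}.

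\textbf{Your route.} You skip the auxiliary problems. Injectivity of your defect map $L$ on $\mathcal S$ is exactly uniqueness for the homogeneous local impedance problem. The paper invokes that fact without a separate proof; you justify it via the imaginary-part identity and the row sweep of Appendix~\ref{proof_A}. The edge-localized, nonzero snapshot defects $\sigma_j\delta_{e_i^j}$ then do the work of the paper's edgewise superposition.

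\textbf{What each buys.} The paper's version mirrors the continuous superposition of boundary excitations and is reused verbatim for the primal component in \cref{decom_proof}. Yours shows that the boundary relations actually satisfied by component II are irrelevant. It also yields, as a by-product, that the snapshot pairs form a basis of $\mathcal S$, so $\dim\mathcal S=J_i$. That makes your fallback dimension count unnecessary. As stated, that count would only give $\dim\mathcal S\ge J_i$, since independence of the cell equations is not automatic and itself needs the same unique-continuation argument.

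\textbf{A false but harmless claim.} Component II does not satisfy the homogeneous impedance relation on physical boundary edges, for two reasons.
\begin{itemize}
\item In \eqref{eq:adjoint-decom-II-a} the physical-edge term carries the full adjoint flux $\Br_h|_{T_i^m}=\Br_{h,T_i^m}^{\mathrm I}+\Br_{h,T_i^m}^{\mathrm{II}}$. The boundary coefficient of $\Br_{h,T_i^m}^{\mathrm I}$ is $2\eta_{h,T_i^m}^{\mathrm I}|_{t_e}$ (proof of \cref{first_comp}), which is generally nonzero.
\item The adjoint problem carries the conjugate impedance: after conjugation the term is $-\frac{i}{k}$, i.e.\ $\overline{\alpha}$ instead of $\alpha$. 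The snapshot problem uses $+\frac{i}{k}$.
\end{itemize}
Hence $L(\Br_{h,T_i^m}^{\mathrm{II}},\eta_{h,T_i^m}^{\mathrm{II}})$ need not vanish on physical edges. Your Step~3 only uses that this defect is some vector in $\mathbb C^{J_i}$, so nothing breaks; simply drop the claim.

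The same observation removes the ``main obstacle'' you anticipate. Only the snapshots must satisfy the homogeneous relation off their excitation edge, and that holds by construction.
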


\begin{proof}
By \eqref{adjoint-decom-II},  $(\Br_{h,T_i^m}^{\mathrm{II}},
\eta_{h,T_i^m}^{\mathrm{II}})$ satisfies
\begin{subequations}
\label{eq:adjII-original-system}
\begin{align}
(\Br_{h,T_i^m}^{\mathrm{II}},\Bv_h)_{Q,T_i^m}
-(\eta_{h,T_i^m}^{\mathrm{II}},
\nabla\cdot\Bv_h)_{T_i^m}
-\frac{i}{k}
\left\langle
\Br_h\cdot\Bn_i,\Bv_h\cdot\Bn_i
\right\rangle_{\partial T_i^m\cap\partial\Omega}
+\left\langle
\{\eta_h\},\Bv_h\cdot\Bn_i
\right\rangle_{\partial T_i^m\setminus\partial\Omega}
&=0,
\label{eq:adjII-original-system-a}
\\
(\nabla\cdot\Br_{h,T_i^m}^{\mathrm{II}},p_h)_{T_i^m}
-(k^2\eta_{h,T_i^m}^{\mathrm{II}},p_h)_{T_i^m}
&=0,
\label{eq:adjII-original-system-b}
\end{align}
\end{subequations}
for all $(\Bv_h,p_h)\in \BV_h(T_i^m)\times W_h(T_i^m)$.
Define
\begin{align*}
\gamma_{\Br,\eta}=
\begin{cases}
-\Br_h\cdot\Bn_i,
& \text{on }\partial T_i^m\cap\partial\Omega,
\\[0.6em]
-ik\{\eta_h\},
& \text{on }\partial T_i^m\setminus\partial\Omega.
\end{cases}
\end{align*}
Then \eqref{eq:adjII-original-system} can be rewritten as
\begin{subequations}
\label{eq:adjII-xi-system}
\begin{align}
(\Br_{h,T_i^m}^{\mathrm{II}},\Bv_h)_{Q,T_i^m}
-(\eta_{h,T_i^m}^{\mathrm{II}},
\nabla\cdot\Bv_h)_{T_i^m}
+\frac{i}{k}
\left\langle
\gamma_{\Br,\eta},\Bv_h\cdot\Bn_i
\right\rangle_{\partial T_i^m}
&=0,
\label{eq:adjII-xi-system-a}
\\
(\nabla\cdot\Br_{h,T_i^m}^{\mathrm{II}},p_h)_{T_i^m}
-(k^2\eta_{h,T_i^m}^{\mathrm{II}},p_h)_{T_i^m}
&=0.
\label{eq:adjII-xi-system-b}
\end{align}
\end{subequations}

We now convert \eqref{eq:adjII-xi-system} into an equivalent
full-boundary impedance problem. Define $g_i
:=
\gamma_{\Br,\eta}-\Br_{h,T_i^m}^{\mathrm{II}}\cdot\Bn_i
\text{ on }\partial T_i^m.$ Let
$(\widetilde{\Br}_i^{\mathrm{II}},
\widetilde{\eta}_i^{\mathrm{II}})
\in\BV_h(T_i^m)\times W_h(T_i^m)$
be the solution of
\begin{subequations}
\label{eq:adjII-aux-impedance}
\begin{align}
(\widetilde{\Br}_i^{\mathrm{II}},\Bv_h)_{Q,T_i^m}
-(\widetilde{\eta}_i^{\mathrm{II}},
\nabla\cdot\Bv_h)_{T_i^m}
+\frac{i}{k}
\left\langle
\widetilde{\Br}_i^{\mathrm{II}}\cdot\Bn_i+g_i,
\Bv_h\cdot\Bn_i
\right\rangle_{\partial T_i^m}
&=0,
\label{eq:adjII-aux-impedance-a}
\\
(\nabla\cdot\widetilde{\Br}_i^{\mathrm{II}},p_h)_{T_i^m}
-(k^2\widetilde{\eta}_i^{\mathrm{II}},p_h)_{T_i^m}
&=0.
\label{eq:adjII-aux-impedance-b}
\end{align}
\end{subequations}
Set $\Be_r
:=
\widetilde{\Br}_i^{\mathrm{II}}
-\Br_{h,T_i^m}^{\mathrm{II}},
\quad
e_\eta
:=
\widetilde{\eta}_i^{\mathrm{II}}
-\eta_{h,T_i^m}^{\mathrm{II}}.$ Subtracting \eqref{eq:adjII-xi-system} from
\eqref{eq:adjII-aux-impedance} therefore gives
\begin{align*}
(\Be_r,\Bv_h)_{Q,T_i^m}
-(e_\eta,\nabla\cdot\Bv_h)_{T_i^m}
+\frac{i}{k}
\left\langle
\Be_r\cdot\Bn_i,\Bv_h\cdot\Bn_i
\right\rangle_{\partial T_i^m}
&=0,
\\
(\nabla\cdot\Be_r,p_h)_{T_i^m}
-(k^2e_\eta,p_h)_{T_i^m}
&=0.
\end{align*}
The uniqueness of the homogeneous local impedance problem yields
$\Be_r=0$ and $e_\eta=0$. Hence
\begin{align}
    \label{equivalent-solutions}
    \widetilde{\Br}_i^{\mathrm{II}}
=
\Br_{h,T_i^m}^{\mathrm{II}},
\quad
\widetilde{\eta}_i^{\mathrm{II}}
=
\eta_{h,T_i^m}^{\mathrm{II}}.
\end{align}
We next decompose $g_i$ as 
\begin{equation}
g_i=\sum_{j=1}^{J_i}g_{i,j},
\qquad
\operatorname{supp}g_{i,j}\subset e_i^j.
\label{eq:adjII-edge-decomposition}
\end{equation}
For each $j$, let
$(\Br_{i,j},\eta_{i,j})
\in\BV_h(T_i^m)\times W_h(T_i^m)$
be the unique solution of
\begin{subequations}
\label{eq:adjII-edge-impedance}
\begin{align}
(\Br_{i,j},\Bv_h)_{Q,T_i^m}
-(\eta_{i,j},\nabla\cdot\Bv_h)_{T_i^m}
+\frac{i}{k}
\left\langle
\Br_{i,j}\cdot\Bn_i+g_{i,j},
\Bv_h\cdot\Bn_i
\right\rangle_{\partial T_i^m}
&=0,
\label{eq:adjII-edge-impedance-a}
\\
(\nabla\cdot\Br_{i,j},p_h)_{T_i^m}
-(k^2\eta_{i,j},p_h)_{T_i^m}
&=0.
\label{eq:adjII-edge-impedance-b}
\end{align}
\end{subequations}
Summing \eqref{eq:adjII-edge-impedance} over
$j=1,\ldots,J_i$ and using \eqref{eq:adjII-edge-decomposition}, 
$\left(
\sum_{j=1}^{J_i}\Br_{i,j},
\sum_{j=1}^{J_i}\eta_{i,j}
\right)$ 
satisfies \eqref{eq:adjII-aux-impedance}.
By uniqueness and \eqref{equivalent-solutions}, we have 
\begin{align}
\label{deco}
    \Br_{h,T_i^m}^{\mathrm{II}}
=
\sum_{j=1}^{J_i}\Br_{i,j},
\quad
\eta_{h,T_i^m}^{\mathrm{II}}
=
\sum_{j=1}^{J_i}\eta_{i,j}.
\end{align}
For each $j$, define $c_{i,j}
:=
\left.
\frac{i}{k}
\left(
\Br_{i,j}\cdot\Bn_i+g_{i,j}
\right)
\right|_{e_i^j}. $ Since $\Br_{i,j}\cdot\Bn_i$ and $g_{i,j}$ are constant on
$e_i^j$, $c_{i,j}$ is a scalar. Moreover,
$g_{i,j}=0$ on $\partial T_i^m\setminus e_i^j$, and hence
\eqref{eq:adjII-edge-impedance-a} becomes
\begin{align*}
(\Br_{i,j},\Bv_h)_{Q,T_i^m}
-(\eta_{i,j},\nabla\cdot\Bv_h)_{T_i^m}
+\left\langle
c_{i,j},\Bv_h\cdot\Bn_i
\right\rangle_{e_i^j}
+\frac{i}{k}
\left\langle
\Br_{i,j}\cdot\Bn_i,
\Bv_h\cdot\Bn_i
\right\rangle_{\partial T_i^m\setminus e_i^j}
=0.
\end{align*}
Together with \eqref{eq:adjII-edge-impedance-b},
this is precisely the discrete snapshot problem with boundary value
$c_{i,j}$ on $e_i^j$. Since
$(\boldsymbol\Phi_i^j,\psi_i^j)$ denotes the corresponding snapshot
pair with unit boundary value on $e_i^j$, the linearity and uniqueness
of the local snapshot problem imply
\begin{equation}
\Br_{i,j}
=
c_{i,j}\boldsymbol\Phi_i^j,
\qquad
\eta_{i,j}
=
c_{i,j}\psi_i^j.
\label{eq:adjII-edge-snapshot-representation}
\end{equation}
Combining \eqref{deco} and
\eqref{eq:adjII-edge-snapshot-representation} gives $\eta_{h,T_i^m}^{\mathrm{II}}
=
\sum_{j=1}^{J_i}c_{i,j}\psi_i^j.$
\end{proof}
Combining the estimate for the source-driven component
$\eta_h^{\mathrm{I}}$ with the snapshot-space representation of
$\eta_h^{\mathrm{II}}$ established above, and applying the local spectral
approximation on each oversampling region, we obtain the following global
approximation property for the fine-grid adjoint solution. The proof is given in Appendix~\ref{proof_E}.
\begin{lemma}
\label{eta_h-eta_ms}
Under Assumption~\ref{assumption}, let $\eta_h\in W_h$ be the solution of \eqref{eq:ideal-adjoint}. 
Then there exists $\eta_{\rm ms}\in W_{\rm ms}$ such that
\begin{align*}
    \|\eta_h-\eta_{\rm ms}\|_D
\le \theta_{\rm ms}\|w_{\rm ms}\|_D,
\end{align*}
where
\begin{align*}
\theta_{\rm ms}
:={}&
\frac{2}{3}
\sqrt{10N_{\rm ov}(m+1)}
\,k(H+2mh)
\\
&+
\left[
\frac{
2C_{R,m}N_{\rm ov}H^2
}{
\Lambda_{1,m}
}
\left(
C_{Q,kh}(k^2h+k)
+
\frac{20}{9}
C_{\alpha,kh}
k^2(H+2mh)^2h
\right)
\right]^{1/2}.
\end{align*}
Here $C_{R,m}$ is the restriction constant in
\eqref{eq:restriction-constant}, and $N_{\rm ov}$ bounds cell and edge
overlaps, including artificial-boundary cell incidences.
The remaining constants are
\begin{align*}
\Lambda_{1,m}
:=
\min_{1\le i\le N_T}\lambda_{l_i+1,1},
\quad
C_{Q,k h}
:=
32\bigl(C_{T,k h}+k^{-1}\bigr)^2, \text{ and }C_{\alpha,kh}
:=
\max\{1,kh\}.
\end{align*}
\end{lemma}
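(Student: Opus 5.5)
We construct $\eta_{\rm ms}$ coarse element by coarse element from the decomposition \eqref{adjoint-global-decomposition}. On each $T_i$ we have $\eta_h|_{T_i}=\eta_h^{\mathrm I}|_{T_i}+\eta_h^{\mathrm{II}}|_{T_i}$. By \cref{thm:adjoint-II-snapshot-representation}, $\eta_{h,T_i^m}^{\mathrm{II}}\in W_{i,m}^{\rm snap}$, so it expands in the $s_i$-orthonormal eigenbasis as $\eta_{h,T_i^m}^{\mathrm{II}}=\sum_{k=1}^{r_i}d_{i,k}\tilde\psi_i^k$. We define
\[
\eta_{\rm ms}|_{T_i}:=\sum_{k=1}^{l_i}d_{i,k}\psi_i^{k,\mathrm{ms}},
\]
so that $\eta_h-\eta_{\rm ms}=\eta_h^{\mathrm I}+\sum_i(\text{spectral tail restricted to }T_i)$. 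The two terms of $\theta_{\rm ms}$ then correspond to these two pieces.

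\emph{Step 1: source part.} The global $D$-norm of a function defined elementwise on the $T_i$ is bounded by a sum of local $D$-norms on $T_i^m$. This step costs a restriction/overlap factor, since jumps across coarse interfaces involve values from two different local solutions. The interface contributions are controlled by the boundary terms $2\sum_{e\in\mathcal E^B_{T_i^m}}|\eta^{\mathrm I}|_{t_e}|^2$, which already appear in \eqref{q_u_condition}. Using \eqref{eq:first-comp-energy} and summing over $i$ with overlap $N_{\rm ov}$, we get
\[
\|\eta_h^{\mathrm I}\|_D^2\lesssim N_{\rm ov}(m+1)\tfrac{20}{9}k^4(H+2mh)^2\sum_i\|w_{\rm ms}\|_{0,T_i^m}^2,
\]
and $\sum_i\|w_{\rm ms}\|^2_{0,T_i^m}\le N_{\rm ov}k^{-2}\|w_{\rm ms}\|_D^2$. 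This produces the first term $\tfrac23\sqrt{10N_{\rm ov}(m+1)}\,k(H+2mh)$.

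\emph{Step 2: spectral tail.} By the restriction constant $C_{R,m}$, the $D$-norm of the tail restricted to $T_i$ is controlled by $h^{-1}a_i(\cdot,\cdot)$ on $T_i^m$, after rescaling the weights $h$, $kh^2$ and $h|\alpha|$ to the $D$-norm weights. The $s_i$-orthogonality of the eigenfunctions then gives the spectral bound
\[
a_i(\text{tail},\text{tail})\le\lambda_{l_i+1}^{-1}\,a_i(\eta^{\mathrm{II}},\eta^{\mathrm{II}})\cdot\frac{1}{\lambda_{l_i+1}}\cdots,
\]
or, in the form we actually use, $s_i(\text{tail})\le a_i(\eta^{\mathrm{II}})/\lambda_{l_i+1}$. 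Since $s_i$ carries the factor $(m+1)/H^2$, this becomes
\[
\|\text{tail}\|_{D}^2\lesssim C_{R,m}\frac{H^2}{\lambda_{l_i+1}}\,\frac{a_i(\eta^{\mathrm{II}},\eta^{\mathrm{II}})}{(m+1)\,h}\cdot(\ldots),
\]
and the exact bookkeeping of the $h$-weights is what produces the factors $k^2h+k$ and $k^2(H+2mh)^2h$. Next we bound $a_i(\eta^{\mathrm{II}},\eta^{\mathrm{II}})\le 2a_i(\eta_h)+2a_i(\eta^{\mathrm I})$. For $\eta^{\mathrm I}$, the local estimate \eqref{eq:first-comp-D} together with the weight $C_{\alpha,kh}$ gives the term $\tfrac{20}{9}C_{\alpha,kh}k^2(H+2mh)^2h$. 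For $\eta_h$, we use the fine-grid stability, i.e.\ the adjoint analogue of \cref{fine-grid-stability} with source $2k^2w_{\rm ms}$, which gives $\|\eta_h\|_D\le 4\sqrt2(C_{T,kh}+k^{-1})k\|w_{\rm ms}\|_D$, hence $C_{Q,kh}$. The artificial boundary weight $kh^2$ multiplies $|\eta_h|^2$ on edges, which we control by $k^2h\cdot k^{-2}\cdot k^2\|\eta_h\|_0^2$-type bounds; these give the split $k^2h+k$. Summing over $i$ with overlap $N_{\rm ov}$ and taking the minimum eigenvalue $\Lambda_{1,m}$ yields the bracketed second term.

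\emph{Main obstacle.} The hardest step is the restriction step: controlling jumps of $\eta_{\rm ms}$, and of the tail, across coarse interfaces $E_{i,j}$. These involve values of two independently truncated local functions. My first attempt is to bound each such jump by the sum of the two one-sided values and charge them to edge terms of $T_i^m$ that are interior to the oversampling region, valid for $m\ge1$. This is exactly the content of $C_{R,m}$ in \eqref{eq:restriction-constant}, which I take as given. I also need to verify the weight bookkeeping that turns the $a_i$-weights into $D$-norm weights.
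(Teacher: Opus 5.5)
Your architecture matches the paper's: you split $\eta_h=\eta_h^{\mathrm I}+\eta_h^{\mathrm{II}}$, truncate the $s_i$-eigenexpansion of $\eta_{h,T_i^m}^{\mathrm{II}}$, bound the source part by \cref{first_comp}, and bound the tail through $a_i(\eta^{\mathrm{II}})\le 2a_i(\eta_h)+2a_i(\eta^{\mathrm I})$ and $\|\eta_h\|_D\le 4\sqrt2(1+C_{T,kh}k)\|w_{\rm ms}\|_D$. The gap is the step you take as given, the restriction inequality \eqref{eq:restriction-assembly}. It is not an earlier result; it is the main estimate proved inside this lemma, and the mechanism you sketch for it cannot work. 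Charging the one-sided value of $v_i$ at a coarse interface to jumps interior to $T_i^m$ needs an anchor, since a constant has no jumps. The missing idea is a telescoping chain to the patch boundary. For a fine edge $e$ on a coarse interface, let $z_0,\dots,z_r$ ($r\le m$) be the values of $v_i$ on the straight row of cells from the cell of $T_i$ at $e$ outward to $\partial T_i^m$. Then $|z_0|^2\le (r+\beta^{-1})\bigl(\sum_{\ell}|z_\ell-z_{\ell+1}|^2+\beta|z_r|^2\bigr)$, where $\beta|z_r|^2$ is the patch-boundary term of $\|v_i\|_{D,T_i^m}$ ($\beta=1$ on artificial edges, $\beta=|\alpha|$ on physical ones). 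Chains from opposite sides of $T_i$ lie in disjoint strips, and chains from perpendicular sides cross edges of different orientation, so each local term is used at most once per patch. Together with $|z_0^{(i)}-z_0^{(j)}|^2\le2(|z_0^{(i)}|^2+|z_0^{(j)}|^2)$ this gives $\|v\|_D^2\le C_{R,m}(m+1)\sum_i\|v_i\|_{D,T_i^m}^2=C_{R,m}H^2\sum_i s_i(v_i,v_i)$. This also holds for $m=0$ ($r=0$, $\beta=1$), so your restriction to $m\ge1$ is unnecessary.

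The first term of $\theta_{\rm ms}$ needs a second variant that you only gesture at. Run the chain with $\beta=2$ against $\mathcal D_i(v_i)=\sum_{e\in\mathcal E^0_{T_i^m}}|\llbracket v_i\rrbracket_e|^2+2\sum_{e\in\mathcal E^B_{T_i^m}}|v_i|_{t_e}|^2+k^2\|v_i\|_{0,T_i^m}^2$, the quantity in \eqref{q_u_condition}; physical edges are covered because $|\alpha|\le2$. Then $r+\tfrac12\le m+1$ and $\|\eta_h^{\mathrm I}\|_D^2\le 2(m+1)\sum_i\mathcal D_i(\eta_{h,T_i^m}^{\mathrm I})$. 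This restriction costs no overlap factor; $N_{\rm ov}$ enters only once, through $\sum_i\|w_{\rm ms}\|_{0,T_i^m}^2\le N_{\rm ov}\|w_{\rm ms}\|_{0,\Omega}^2$. Your Step 1 puts $N_{\rm ov}$ where $2$ belongs and then counts $N_{\rm ov}$ again, giving a factor $N_{\rm ov}$ instead of $\sqrt{2N_{\rm ov}}$. Using $C_{R,m}\ge2$ in place of $2$ would also not reproduce the stated constant.

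In Step 2, do not pass through $h^{-1}a_i$. It weights the artificial patch-boundary values, which anchor the interface jumps, only by $kh$. The weights of $s_i$ are those of $\|\cdot\|_{D,T_i^m}$, and its factor $(m+1)/H^2$ is absorbed exactly by $C_{R,m}(m+1)$, so no factor $1/((m+1)h)$ arises. The correct chain is $\|\eta_{\rm snap}-\eta_{\rm ms}\|_D^2\le C_{R,m}H^2\sum_i s_i(e_i^m,e_i^m)\le \frac{C_{R,m}H^2}{\Lambda_{1,m}}\sum_i a_i(e_i^m,e_i^m)$, with $a_i(e_i^m,e_i^m)\le a_i(\eta_{h,T_i^m}^{\mathrm{II}},\eta_{h,T_i^m}^{\mathrm{II}})$ by $a_i$-orthogonality of the eigenfunctions. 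The split $k^2h+k$ then comes from $\sum_i a_i(\eta_h,\eta_h)\le N_{\rm ov}(h+k^{-1})\|\eta_h\|_D^2$, using $kh^2|\eta_h|_{t_e}|^2=k\|\eta_h\|_{0,t_e}^2$ on artificial edges. The $\eta^{\mathrm I}$ contribution comes from $a_i(v,v)\le\max\{1,kh\}\,h\,\|v\|_{D,T_i^m}^2$ together with \eqref{eq:first-comp-D}.
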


Combining the above adjoint approximation property with the fine-grid stability yields the following inf--sup condition on the multiscale space.
\begin{theorem}
\label{thm:ms-infsup}
Under the hypotheses of \cref{thm:main-infsup} and \cref{assumption}, assume that
$\theta_{\rm ms}\le\frac{\sqrt{2}}{4}$.
Then the sesquilinear form $a_h(\cdot,\cdot)$ satisfies the inf--sup condition as follows:
\begin{align*}
\beta_{\rm ms}
:=
\inf_{0\neq w_{\rm ms}\in W_{\rm ms}}
\sup_{0\neq v_{\rm ms}\in W_{\rm ms}}
\frac{
|a_h(w_{\rm ms},v_{\rm ms})|
}{
\|w_{\rm ms}\|_D\|v_{\rm ms}\|_D
}\ge \frac{\sqrt{2}}{4(7(1+C_{T,k h}k)+\theta_{\rm ms})}.
\end{align*}
\end{theorem}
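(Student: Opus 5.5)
The plan is the standard Schatz-type argument that the paper already used for \cref{thm:main-infsup}. This time the test function will be built from the multiscale approximation of the fine-grid adjoint solution given by \cref{eta_h-eta_ms}.

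Fix $0\neq w_{\rm ms}\in W_{\rm ms}$. Let $\eta_h=\eta_h(w_{\rm ms})$ solve \eqref{eq:ideal-adjoint}, and let $\eta_{\rm ms}\in W_{\rm ms}$ be the function from \cref{eta_h-eta_ms}, so that $\|\eta_h-\eta_{\rm ms}\|_D\le\theta_{\rm ms}\|w_{\rm ms}\|_D$. I take the test function
\[
v_{\rm ms}^\star:=w_{\rm ms}+\eta_{\rm ms}\in W_{\rm ms}.
\]
Taking $p_h=w_{\rm ms}$ in \eqref{eq:ideal-adjoint} gives $a_h(w_{\rm ms},\eta_h)=2k^2\|w_{\rm ms}\|_{0,\Omega}^2$. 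Hence
\[
a_h(w_{\rm ms},v_{\rm ms}^\star)=b_h(w_{\rm ms},w_{\rm ms})-a_h(w_{\rm ms},\eta_h-\eta_{\rm ms}).
\]
By the coercivity of $b_h$ and the continuity of $a_h$ in \cref{a_and_b}, together with $\theta_{\rm ms}\le\sqrt2/4$, this yields
\[
|a_h(w_{\rm ms},v_{\rm ms}^\star)|\ge(1/\sqrt2-\theta_{\rm ms})\|w_{\rm ms}\|_D^2\ge\tfrac{\sqrt2}{4}\|w_{\rm ms}\|_D^2 .
\]

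For the denominator I use the triangle inequality:
\[
\|v_{\rm ms}^\star\|_D\le\|w_{\rm ms}\|_D+\|\eta_h\|_D+\|\eta_h-\eta_{\rm ms}\|_D\le(1+\theta_{\rm ms})\|w_{\rm ms}\|_D+\|\eta_h\|_D .
\]
The remaining task, and the only slightly delicate step, is a bound on $\|\eta_h\|_D$. Here $\eta_h$ solves the adjoint problem, which has the arguments of $a_h$ swapped, so I need an inf--sup condition in the second argument.

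Since $W_h$ is finite-dimensional and $a_h$ is nondegenerate by \cref{thm:main-infsup}, the inf--sup constant of the adjoint form coincides with $\beta_h$. This is the standard equality of the two inf--sup constants for a square system, via the norm of the inverse operator and of its adjoint. Therefore
\[
\beta_h\|\eta_h\|_D\le\sup_{p_h}\frac{|a_h(p_h,\eta_h)|}{\|p_h\|_D}=\sup_{p_h}\frac{2k^2|(p_h,w_{\rm ms})_\Omega|}{\|p_h\|_D}\le 2\|w_{\rm ms}\|_D .
\]
The last inequality uses $k\|p_h\|_{0,\Omega}\le\|p_h\|_D$ and $k\|w_{\rm ms}\|_{0,\Omega}\le\|w_{\rm ms}\|_D$. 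With \eqref{eq:fine-grid-infsup} this gives
\[
\|\eta_h\|_D\le 4\sqrt2(1+C_{T,kh}k)\|w_{\rm ms}\|_D .
\]

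Combining the two bounds and using $1+4\sqrt2(1+C_{T,kh}k)\le 7(1+C_{T,kh}k)$, I get
\[
\|v_{\rm ms}^\star\|_D\le\bigl(7(1+C_{T,kh}k)+\theta_{\rm ms}\bigr)\|w_{\rm ms}\|_D .
\]
Dividing the numerator estimate by this bound, and then taking the supremum over $v_{\rm ms}$ and the infimum over $w_{\rm ms}$, yields the stated $\beta_{\rm ms}$.

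I expect the main obstacle to be justifying the adjoint stability bound cleanly, namely the equality of the two discrete inf--sup constants. If that argument proves awkward, I would instead rerun the proof of \cref{thm:main-infsup} with the arguments swapped, using the conjugated continuous adjoint problem.
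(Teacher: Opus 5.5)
Your proposal is correct and follows the paper's proof essentially step by step. You use the same test function $w_{\rm ms}+\eta_{\rm ms}$, the same numerator bound via $b_h$-coercivity, and the same estimate $\|\eta_h\|_D\le 4\sqrt2(1+C_{T,kh}k)\|w_{\rm ms}\|_D$, which leads to the factor $7$. The one difference is that you explicitly justify the inf--sup bound in the second argument, via equality of the two inf--sup constants for the invertible square finite-dimensional system. The paper invokes this tacitly when it applies \cref{thm:main-infsup} to $a_h(v_h,\eta_h)$.
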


\begin{proof}
For a fixed \(0\neq w_{\rm ms}\in W_{\rm ms}\), define
\(\eta_h=\eta_h(w_{\rm ms})\) satisfying \eqref{eq:ideal-adjoint}. 
By \cref{eta_h-eta_ms}, 
Choose the multiscale approximation
\(\eta_{\rm ms}\in W_{\rm ms}\) satisfying
\eqref{ms-approximation-eta}, and set $v_{\rm ms}
:=
w_{\rm ms}+\eta_{\rm ms}
\in W_{\rm ms}. $ 
By \eqref{eq:ideal-adjoint}, we have $a_h(w_{\rm ms},w_{\rm ms}+\eta_h)
=
a_h(w_{\rm ms},w_{\rm ms})
+
2k^2\|w_{\rm ms}\|_{0,\Omega}^2
=
b_h(w_{\rm ms},w_{\rm ms}).$ 
Therefore, the continuity of $a_h$ and \cref{eta_h-eta_ms} give
\begin{align}
|a_h(w_{\rm ms},v_{\rm ms})|
\geq
|b_h(w_{\rm ms},w_{\rm ms})|
-
|a_h(w_{\rm ms},\eta_h-\eta_{\rm ms})|
\geq
\left(
\frac{\sqrt{2}}{2}-\theta_{\rm ms}
\right)
\|w_{\rm ms}\|_D^2.
\label{eq:ms-numerator-lower}
\end{align}
Furthermore, using \cref{thm:main-infsup} and \eqref{eq:ideal-adjoint},
\begin{align*}
\frac{\sqrt{2}}{4(1+C_{T,k h}k)}\|\eta_h\|_D
&\le
\sup_{v_h\neq0}
\frac{|a_h(v_h,\eta_h)|}{\|v_h\|_D}
=
\sup_{v_h\neq0}
\frac{2k^2|(v_h,w_{\rm ms})_\Omega|}{\|v_h\|_D}.
\end{align*}
Since $k\|v_h\|_{0,\Omega}\le\|v_h\|_D$ and $k\|w_{\rm ms}\|_{0,\Omega}\le\|w_{\rm ms}\|_D$,
\begin{align*}
2k^2|(v_h,w_{\rm ms})_\Omega|
&\le2k^2\|v_h\|_{0,\Omega}\|w_{\rm ms}\|_{0,\Omega}
\le2\|v_h\|_D\|w_{\rm ms}\|_D.
\end{align*}
This proves $\|\eta_h\|_D
\leq
4\sqrt{2}(1+C_{T,k h}k)\|w_{\rm ms}\|_D.$  Hence
\begin{align}
\|v_{\rm ms}\|_D
&\leq
\|w_{\rm ms}\|_D
+
\|\eta_{\rm ms}\|_D
\nonumber\\
&\leq
\|w_{\rm ms}\|_D
+
\|\eta_h\|_D
+
\|\eta_h-\eta_{\rm ms}\|_D
\nonumber\\
&\leq
\left(
7(1+C_{T,k h}k)+\theta_{\rm ms}
\right)
\|w_{\rm ms}\|_D.
\label{eq:ms-test-upper}
\end{align}
Consequently, when $\theta_{\rm ms}\le\frac{\sqrt{2}}{4},$ combining \eqref{eq:ms-numerator-lower} and \eqref{eq:ms-test-upper}, 
we have
\begin{align*}
\frac{\sqrt{2}}{4(7(1+C_{T,k h}k)+\theta_{\rm ms})}\le
\inf_{0\neq w_{\rm ms}\in W_{\rm ms}}
\sup_{0\neq v_{\rm ms}\in W_{\rm ms}}
\frac{
|a_h(w_{\rm ms},v_{\rm ms})|
}{
\|w_{\rm ms}\|_D\|v_{\rm ms}\|_D
}=:\beta_{\rm ms}.
\end{align*}
\end{proof}
It is a direct consequence of this inf-sup condition and Lemma \ref{a_and_b} that the multiscale discrete problem \eqref{GMs} is well-posed.
\begin{corollary}
\label[corollary]{multiscale-solvable}
Under the assumptions of \cref{thm:ms-infsup}, the multiscale discrete problem \eqref{GMs} admits a unique solution
$u_{\rm ms}\in W_{\rm ms}$.
\end{corollary}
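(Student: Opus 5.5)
The corollary is a finite-dimensional statement, so existence and uniqueness of $u_{\rm ms}$ are equivalent. The plan is to write \eqref{GMs} as a square linear system on $W_{\rm ms}$ and show that the homogeneous system has only the trivial solution; injectivity then forces invertibility.

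Concretely, fix the basis $\{\psi_i^{k,\mathrm{ms}}:1\le k\le l_i,\ 1\le i\le N_T\}$ of $W_{\rm ms}$ and write $u_{\rm ms}=\sum c_{i,k}\psi_i^{k,\mathrm{ms}}$. Testing \eqref{GMs} with each basis function gives a system $A\mathbf c=\mathbf b$ with $A\in\mathbb C^{N\times N}$ and $N=\dim W_{\rm ms}$. The right-hand side involves $(f,p_h)_\Omega$, $a_h(u_f,p_h)$ and $\langle g,p_h\rangle_{\partial\Omega}$. These are well defined because $u_f\in W_h$ is already fixed: each local source problem \eqref{eq:local-source-corrector} is a discrete local impedance problem whose unique solvability follows exactly as in \cref{well_posed_of_snap}, or from the uniqueness of the homogeneous local impedance problem invoked in the proof of \cref{thm:adjoint-II-snapshot-representation}.

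Suppose $w_{\rm ms}\in W_{\rm ms}$ satisfies $a_h(w_{\rm ms},v_{\rm ms})=0$ for all $v_{\rm ms}\in W_{\rm ms}$. If $w_{\rm ms}\neq0$, \cref{thm:ms-infsup} gives
\[
\sup_{0\neq v_{\rm ms}\in W_{\rm ms}}\frac{|a_h(w_{\rm ms},v_{\rm ms})|}{\|v_{\rm ms}\|_D}\ge\beta_{\rm ms}\|w_{\rm ms}\|_D>0,
\]
which is a contradiction. We only need $\beta_{\rm ms}>0$, and this holds because the explicit bound is positive under $\theta_{\rm ms}\le\sqrt2/4$.

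To pass from $w_{\rm ms}\neq0$ to $\|w_{\rm ms}\|_D>0$, we use that $\|\cdot\|_D$ is a genuine norm on $W_h$: it contains $k^2\|\cdot\|_{0,\Omega}^2$ with $k>0$. Hence $A$ is injective, hence invertible, and \eqref{GMs} has a unique solution. Continuity from \cref{a_and_b} is not needed for existence, but it can be noted to give the companion stability bound $\|u_{\rm ms}\|_D\le\beta_{\rm ms}^{-1}\sup|F(v)|/\|v\|_D$, where $F$ is the right-hand side functional.

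There is no real obstacle here. The only point requiring care is keeping the order of the sesquilinear arguments straight: the theorem's inf--sup places the trial function first, which is exactly what injectivity of the map $w\mapsto a_h(w,\cdot)$ requires. Square-dimensionality then makes a separate transposed inf--sup condition unnecessary.
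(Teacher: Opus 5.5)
Your proposal is correct and takes the same route as the paper, which gives the corollary as a direct consequence of \cref{thm:ms-infsup} together with \cref{a_and_b}. Your argument (a square system on the finite-dimensional space $W_{\rm ms}$, with injectivity from $\beta_{\rm ms}>0$ for the trial function in the first slot, hence invertibility) is what that remark leaves implicit. Your check that $u_f$ is well defined through the local impedance problems is a worthwhile detail the paper does not spell out.
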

	\section{Convergence of mixed GMsFEM for Helmholtz equation}
    \label{sec-5}
In this section, we establish the convergence of the proposed mixed GMsFEM with velocity elimination in the oversampling setting. We first derive a local decomposition of the fine-grid solution and show that the boundary-driven component admits a representation in the corresponding local snapshot space. Based on this representation and the local spectral approximation, we then obtain a best-approximation estimate of the fine-grid solution in the corrected multiscale space. Finally, combining this approximation property with the stability of the multiscale discretization yields the a priori error estimate for the multiscale solution.

For each $T_i\in\mathcal T_H$, we consider a local problem on the
oversampled region $T_i^m$. We use the numerical interface trace
introduced in \eqref{eq:numerical-interface-traces}; the restriction
argument is the same as that used to derive
\eqref{eq:adjoint-local-restriction}. Let
$(\widehat{\Bq},\widehat{u})
\in \BV_h(T_i^m)\times W_h(T_i^m)$
satisfy
\begin{subequations}
\label{eq:primal-local-recovery}
\begin{align}
(\widehat{\Bq},\Bv_h)_{Q,T_i^m}
-(\widehat{u},\nabla\cdot\Bv_h)_{T_i^m}
+\frac{i}{k}
\left\langle
\widehat{\Bq}\cdot\Bn_i+\widetilde{g},
\Bv_h\cdot\Bn_i
\right\rangle_{\partial T_i^m}
&=0,
\label{eq:primal-local-recovery-a}
\\
(\nabla\cdot\widehat{\Bq},p_h)_{T_i^m}
-(k^2\widehat{u},p_h)_{T_i^m}
&=
(f,p_h)_{T_i^m},
\label{eq:primal-local-recovery-b}
\end{align}
\end{subequations}
for all
$(\Bv_h,p_h)\in
\BV_h(T_i^m)\times W_h(T_i^m)$,
where the boundary datum $\widetilde{g}$ is defined by
\begin{equation*}
\widetilde{g}
=
\begin{cases}
g,
& \text{on } \partial T_i^m\cap\partial\Omega,
\\[1mm]
g_{i,h}:=-\Bq_h\cdot\Bn_i-ik \{u_h\},
& \text{on } \partial T_i^m\setminus\partial\Omega.
\end{cases}
\end{equation*}
Here, the boundary data on
$\partial T_i^m\setminus\partial\Omega$ are inherited from the global
fine-grid solution $(\Bq_h,u_h)$. By the same edgewise argument as in
the derivation of \eqref{eq:adjoint-local-restriction}, using
\eqref{eq:numerical-interface-traces},
$(\Bq_h,u_h)|_{T_i^m}$ satisfies
\eqref{eq:primal-local-recovery}, and the uniqueness of the local
problem yields
\[
(\widehat{\Bq},\widehat{u})
=
(\Bq_h,u_h)|_{T_i^m}.
\]

Following the same source--boundary decomposition used for the
adjoint problem above, we decompose the local solution into a
source-driven component and a boundary-driven component. More precisely,
$(\Bq_{h,T_i^m}^{\mathrm{I}},u_{h,T_i^m}^{\mathrm{I}})$
is defined by
\begin{subequations}
\label{decom_1}
\begin{align}
(\Bq_{h,T_i^m}^{\mathrm{I}},\Bv_h)_{Q,T_i^m}
-(u_{h,T_i^m}^{\mathrm{I}},\nabla\cdot\Bv_h)_{T_i^m}
+\frac{i}{k}
\left\langle
\Bq_{h,T_i^m}^{\mathrm{I}}\cdot\Bn_i,
\Bv_h\cdot\Bn_i
\right\rangle_{\partial T_i^m}
&=0,
\label{decom_1.1}
\\
(\nabla\cdot\Bq_{h,T_i^m}^{\mathrm{I}},p_h)_{T_i^m}
-(k^2u_{h,T_i^m}^{\mathrm{I}},p_h)_{T_i^m}
&=
(f,p_h)_{T_i^m},
\label{decom_1.2}
\end{align}
\end{subequations}
for all
$(\Bv_h,p_h)\in
\BV_h(T_i^m)\times W_h(T_i^m)$. 
The second component
$(\Bq_{h,T_i^m}^{\mathrm{II}},u_{h,T_i^m}^{\mathrm{II}})$
is defined by
\begin{subequations}
\label{decom_2}
\begin{align}
(\Bq_{h,T_i^m}^{\mathrm{II}},\Bv_h)_{Q,T_i^m}
-(u_{h,T_i^m}^{\mathrm{II}},\nabla\cdot\Bv_h)_{T_i^m}
+\frac{i}{k}
\left\langle
\Bq_{h,T_i^m}^{\mathrm{II}}\cdot\Bn_i+\widetilde{g},
\Bv_h\cdot\Bn_i
\right\rangle_{\partial T_i^m}
&=0,
\label{decom_2.1}
\\
(\nabla\cdot\Bq_{h,T_i^m}^{\mathrm{II}},p_h)_{T_i^m}
-(k^2u_{h,T_i^m}^{\mathrm{II}},p_h)_{T_i^m}
&=0,
\label{decom_2.2}
\end{align}
\end{subequations}
for all
$(\Bv_h,p_h)\in
\BV_h(T_i^m)\times W_h(T_i^m)$.

By construction and the well-posedness of
\eqref{eq:primal-local-recovery}, we have
\begin{equation*}
(\Bq_h,u_h)|_{T_i^m}
=
(\Bq_{h,T_i^m}^{\mathrm{I}},u_{h,T_i^m}^{\mathrm{I}})
+
(\Bq_{h,T_i^m}^{\mathrm{II}},u_{h,T_i^m}^{\mathrm{II}}).
\end{equation*}
The corresponding piecewise components on the coarse grid  we
define, for $s\in\{\mathrm I,\mathrm{II}\}$,
\begin{align*}
    \Bq_h^s|_{T_i}
:=
\Bq_{h,T_i^m}^s|_{T_i},
\qquad
u_h^s|_{T_i}
:=
u_{h,T_i^m}^s|_{T_i}.
\end{align*}
Consequently,
\begin{equation*}
\Bq_h
=
\Bq_h^{\mathrm{I}}+\Bq_h^{\mathrm{II}},
\qquad
u_h
=
u_h^{\mathrm{I}}+u_h^{\mathrm{II}}.
\end{equation*}

We first estimate the contribution of the source-driven component
$u_{h,T_i^m}^{\mathrm{I}}$.

\begin{lemma}
\label{lem:abc-source-energy}
Let Assumption \ref{assumption} be valid. Let
$u_{h,T_i^m}^{\mathrm{I}}$ be defined in \eqref{decom_1}. Then
\begin{equation}
a_i\!\left(u_{h,T_i^m}^{\mathrm{I}},u_{h,T_i^m}^{\mathrm{I}}\right)
\le \frac{2(H+2mh)h}{k}\|f\|_{0,T_i^m}^2.
\label{eq:abc-source-energy}
\end{equation}
\end{lemma}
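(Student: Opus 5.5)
The plan is to use the velocity-eliminated form of \eqref{decom_1} and derive an energy identity, then control the three groups of terms in $a_i$ separately. Set $u:=u_{h,T_i^m}^{\mathrm I}$ and $\delta:=H+2mh$.

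\emph{Step 1: flux elimination.} Testing \eqref{decom_1.1} with $\Bv_e$, exactly as in the derivation of \eqref{q_u_jump} and \eqref{q_u_bjump}, gives the edge relations. On interior edges of $T_i^m$ we get $q_e=\llbracket u\rrbracket_e$. On every edge of $\partial T_i^m$, artificial or physical, the condition is of impedance type with $g=0$, so $q_e=\alpha\,u|_{t_e}$. Substituting into \eqref{decom_1.2} and testing with $p_h=u$ should give
\[
\sum_{e\in\mathcal E^0_{T_i^m}}|\llbracket u\rrbracket_e|^2+\alpha\sum_{e\in\mathcal E^B_{T_i^m}}|u|_{t_e}|^2-k^2\|u\|_{0,T_i^m}^2=\pm(f,u)_{T_i^m}.
\]
Writing $\alpha=2kh(kh-2i)/((kh)^2+4)$, we have $\Re\alpha>0$ and $|\Im\alpha|=4kh/((kh)^2+4)$.

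\emph{Step 2: two bounds from the identity.} The imaginary part gives
\[
\sum_{B}|u|_{t_e}|^2\le \frac{(kh)^2+4}{4kh}\,\|f\|\,\|u\|.
\]
Since $\Re\alpha\ge0$, the real part gives
\[
\sum_{0}|\llbracket u\rrbracket_e|^2\le k^2\|u\|^2+\|f\|\|u\|.
\]

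\emph{Step 3: an $L^2$ bound for $u$.} This is the key step. I use the first, finer line of the discrete Poincaré inequality \eqref{eq:adjoint-dirichlet-poincare}, which keeps the weight $h\delta/2$ on the boundary term rather than $\delta^2/2$. Inserting the two bounds from Step 2 gives
\[
\|u\|^2\le\frac{\delta^2}{4}\bigl(k^2\|u\|^2+\|f\|\|u\|\bigr)+\frac{h\delta}{2}\cdot\frac{(kh)^2+4}{4kh}\|f\|\|u\|.
\]
\cref{assumption} gives $k\delta<1$, hence $kh<1$. So the $k^2\|u\|^2$ term is absorbed with factor $3/4$, and $\delta^2\le\delta/k$. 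This should give
\[
\|u\|_{0,T_i^m}\le C\,\delta\,k^{-1}\|f\|_{0,T_i^m}.
\]
The main obstacle is exactly this step. Using the coarser second line of \eqref{eq:adjoint-dirichlet-poincare} would produce a spurious factor $\delta/(kh)$, so the boundary weight $h\delta/2$ must be kept.

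\emph{Step 4: the weighted sum $a_i(u,u)$.} Each part is bounded by a multiple of $h\|f\|\|u\|$:
\begin{itemize}
\item Interior edges: $h\sum_0|\llbracket u\rrbracket_e|^2\le h(k^2\|u\|^2+\|f\|\|u\|)$. Here $hk^2\|u\|^2\le hk\delta\,C\|f\|\|u\|/\,\cdot$, which is again $O(h\|f\|\|u\|)$.
\item Artificial boundary edges: $kh^2\sum|u|^2\le h\frac{(kh)^2+4}{4}\|f\|\|u\|$.
\item Physical boundary edges: $h|\alpha|\sum|u|^2$ is bounded the same way, since $|\alpha|\le kh$.
\end{itemize}
Inserting the bound from Step 3 and using $kh<1$ then gives $a_i(u,u)\le C\,\delta h k^{-1}\|f\|^2$. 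The final task is to track the constants sharply enough to reach the factor $2$ claimed in \eqref{eq:abc-source-energy}.
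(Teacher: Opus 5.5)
You follow the paper's skeleton: flux elimination with $q_e=\alpha\,u|_{t_e}$ on all of $\partial T_i^m$, the energy identity and its real and imaginary parts, the discrete Poincar\'e inequality with boundary weight $h\delta/2$, and absorption via $k\delta<1$. The gap is that your estimates lose too much to reach the stated factor $2$, so your last sentence is not just bookkeeping.

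Write $S_0:=\sum_{e\in\mathcal E^0_{T_i^m}}|\llbracket u\rrbracket_e|^2$, $S_B:=\sum_{e\in\mathcal E^B_{T_i^m}}|u|_{t_e}|^2$, $X:=\operatorname{Re}(f,u)_{T_i^m}$ and $Y:=-\operatorname{Im}(f,u)_{T_i^m}$, with all norms taken on $T_i^m$. The identity is $S_0+\alpha S_B-k^2\|u\|^2=(f,u)_{T_i^m}$, with sign $+$. Hence $S_0=k^2\|u\|^2+X-\operatorname{Re}\alpha\,S_B$ and $(-\operatorname{Im}\alpha)S_B=Y\ge0$. You lose in two places:
\begin{itemize}
\item In Step~2 you discard the term $-\operatorname{Re}\alpha\,S_B$.
\item You bound $X$ and $Y$ separately by $\|f\|\|u\|$ and add the bounds.
\end{itemize}
Carried out literally, your Steps~3--4 give $\|u\|\le\frac76\frac{\delta}{k}\|f\|$ and $a_i(u,u)\le\bigl(\frac{49}{36}+\frac{21}{8}\bigr)\frac{\delta h}{k}\|f\|^2\approx3.99\,\frac{\delta h}{k}\|f\|^2$. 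Combining $X$ and $Y$ optimally does not rescue this. Dropping $\operatorname{Re}\alpha\,S_B$ leaves the coefficient $\frac{\delta((kh)^2+4)}{8k}$ on $Y$ in Step~3 and $\frac{(kh)^2+4}{4}$ in Step~4. The resulting constant tends to about $2.24$ as $kh\approx k\delta\to1$ (e.g.\ $H=h$, $m=0$), which the assumption permits.

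The missing step is to keep $\operatorname{Re}\alpha\,S_B$ and eliminate $S_B$ exactly, as the paper does. Substitute the expression for $S_0$ into the Poincar\'e inequality. Also substitute it into $h^{-1}a_i(u,u)\le S_0+kh\,S_B$, treating artificial and physical boundary edges together since $|\alpha|\le kh$, rather than bounding each by the full boundary sum. Then use $S_B=Y/(-\operatorname{Im}\alpha)$. This produces the coefficients
\[
\frac{h\delta/2-\delta^2\operatorname{Re}\alpha/4}{-\operatorname{Im}\alpha}=\frac{\delta}{2k}\Bigl(1-\frac{k^2h(\delta-h)}{4}\Bigr)\le\frac{\delta}{2k},
\qquad
\frac{kh-\operatorname{Re}\alpha}{-\operatorname{Im}\alpha}=1-\frac{kh}{2}+\frac{k^2h^2}{4}\le1 .
\]
Hence $\bigl(1-\frac{k^2\delta^2}{4}\bigr)\|u\|^2\le\frac{\delta^2}{4}X+\frac{\delta}{2k}Y$ and $h^{-1}a_i(u,u)\le k^2\|u\|^2+X+Y$. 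Applying Cauchy--Schwarz jointly to the pair $(X,Y)$ then gives $\|u\|\le\frac{\sqrt5}{3}\frac{\delta}{k}\|f\|$. It also gives $a_i(u,u)\le h\bigl(\frac59\delta^2+\frac{\sqrt{10}}{3}\frac{\delta}{k}\bigr)\|f\|^2\le\frac{2\delta h}{k}\|f\|^2$.

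A minor correction: the coarser Poincar\'e line costs a factor $\delta/h$, not $\delta/(kh)$.
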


The proof of \Cref{lem:abc-source-energy} is deferred to
Appendix~\ref{proof_F}. We then turn to the boundary-driven component
$u_{h,T_i^m}^{\mathrm{II}}$ and show that it admits an exact representation in the local
snapshot space introduced in \Cref{sec-3}.

\begin{lemma}
\label{decom_proof}
Let $(\boldsymbol q_{h,T_i^m}^{\rm I},u_{h,T_i^m}^{\rm I})
\in \BV_h(T_i^m)\times W_h(T_i^m)$ and  $(\boldsymbol q_{h,T_i^m}^{\rm II},u_{h,T_i^m}^{\rm II})
\in \BV_h(T_i^m)\times W_h(T_i^m)$ be the solutions of \eqref{decom_1} and \eqref{decom_2}, then $u_{h,T_i^m}^{\rm I}=\psi_{f,i}^m$ and $u_{h,T_i^m}^{\rm II}\in W_{i,m}^{\rm snap},$
 respectively. Then there exist coefficients
$c_{i,1},\ldots,c_{i,J_i}\in\mathbb C$ such that $u_{h,T_i^m}^{\rm II}
 =\sum_{j=1}^{J_i}c_{i,j}\psi_i^j.$
\end{lemma}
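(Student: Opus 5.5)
The lemma has two claims: $u_{h,T_i^m}^{\rm I}=\psi_{f,i}^m$ and $u_{h,T_i^m}^{\rm II}\in W_{i,m}^{\rm snap}$. The plan is to follow the proof of \cref{thm:adjoint-II-snapshot-representation} almost line by line, with the adjoint data replaced by the primal data $\widetilde g$.

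\emph{First claim.} The discrete source corrector $\psi_{f,i}^m$ is, by construction, the velocity-eliminated mixed discretization of \eqref{eq:local-source-corrector} on $T_i^m$. Written in mixed form, it satisfies exactly \eqref{decom_1}: the same $Q$-quadrature flux mass term, homogeneous impedance on all of $\partial T_i^m$, and source $f$ (up to the sign convention on the divergence equation, which I will check matches). So I only need unique solvability of \eqref{decom_1}. This is the homogeneous local impedance problem with a volume source. Its uniqueness was already invoked in the proof of \cref{thm:adjoint-II-snapshot-representation}, and it can also be checked directly: test with $(\widehat{\Bq},\widehat u)$, take imaginary parts to kill the boundary flux, and conclude $\widehat{\Bq}=0$ on boundary edges. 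Then $q_e=0$ and $u=0$ on boundary cells, and marching inward with $\nabla\cdot\Bq-k^2u=0$ gives $u\equiv0$. Hence $(\Bq_{h,T_i^m}^{\rm I},u_{h,T_i^m}^{\rm I})$ coincides with the corrector pair.

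\emph{Second claim.} Decompose the edgewise-constant datum as $\widetilde g=\sum_{j=1}^{J_i} g_{i,j}$ with $\operatorname{supp} g_{i,j}\subset e_i^j$; on physical boundary edges the data $g$ is replaced by its edge average, since only $\langle \widetilde g,\Bv_h\cdot\Bn_i\rangle$ enters. For each $j$, solve the impedance problem \eqref{decom_2} with datum $g_{i,j}$ to get $(\Bq_{i,j},u_{i,j})$. By linearity and uniqueness, $\sum_j(\Bq_{i,j},u_{i,j})=(\Bq^{\rm II}_{h,T_i^m},u^{\rm II}_{h,T_i^m})$. Next set $c_{i,j}:=\frac{i}{k}(\Bq_{i,j}\cdot\Bn_i+g_{i,j})|_{e_i^j}$, which is a scalar because both terms are constant on the fine edge. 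The $j$-th problem then becomes precisely the snapshot problem of \cref{well_posed_of_snap} with $\zeta=c_{i,j}$ on $e_i^j$ and homogeneous impedance elsewhere. \cref{well_posed_of_snap} together with linearity gives $u_{i,j}=c_{i,j}\psi_i^j$, and summing over $j$ yields the representation $u_{h,T_i^m}^{\rm II}=\sum_j c_{i,j}\psi_i^j$.

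\emph{Main obstacle.} The step that needs care is the identification of the $j$-th impedance problem with the Dirichlet-excited snapshot problem. This requires checking that the weak Dirichlet term $\langle\zeta,\Bv_h\cdot\Bn\rangle_{e_i^j}$ in \cref{well_posed_of_snap} matches $\frac{i}{k}\langle \Bq_{i,j}\cdot\Bn_i+g_{i,j},\Bv_h\cdot\Bn_i\rangle_{e_i^j}$ for all test functions, and that sign conventions for $\Bn_i$ and conjugation agree. Since both sides are tested only against $\Bv_h\cdot\Bn_i$, which is constant on the edge, this reduces to an equality of scalars.
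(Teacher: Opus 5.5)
Your proposal is correct and follows essentially the same route as the paper. For the snapshot representation, you replace $g$ by its edge averages, split the impedance datum edge by edge, solve the local impedance problems, sum them by uniqueness, and identify each one with the snapshot problem via $c_{i,j}=\frac{i}{k}(\Bq_{i,j}\cdot\Bn_i+g_{i,j})|_{e_i^j}$ and \cref{well_posed_of_snap}; the paper does the same. Your only additions are the explicit justifications of $u_{h,T_i^m}^{\rm I}=\psi_{f,i}^m$ and of uniqueness for the full-impedance local problem, which the paper leaves implicit because \eqref{decom_1} is exactly the discretized corrector problem.
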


\begin{proof}
Let $g_h$ denote the edgewise constant projection of $g$ on
$\partial T_i^m\cap\partial\Omega$ . Since $\boldsymbol v_h\cdot\boldsymbol n$ is constant
on each boundary fine edge,
\[
\langle g_h,\boldsymbol v_h\cdot\boldsymbol n\rangle_{\partial T_i^m\cap\partial\Omega}
=
\langle g,\boldsymbol v_h\cdot\boldsymbol n\rangle_{\partial T_i^m\cap\partial\Omega}.
\]
Define the discrete impedance datum $\eta_i$ on $\partial T_i^m$ by
\[
\eta_i:=
\begin{cases}
g_{i,h},
&\text{on }\partial T_i^m\setminus \partial\Omega,\\[1mm]
g_h,
&\text{on }\partial T_i^m\cap\partial\Omega.
\end{cases}
\]
Then \eqref{decom_2.1} can be written equivalently as
\begin{equation}
(\boldsymbol q_{h,T_i^m}^{\rm II},\boldsymbol v_h)_{Q,T_i^m}
-(u_{h,T_i^m}^{\rm II},\nabla\cdot\boldsymbol v_h)_{T_i^m}
+\frac{i}{k}
 \langle
 \boldsymbol q_{h,T_i^m}^{\rm II}\cdot\boldsymbol n+\eta_i,
 \boldsymbol v_h\cdot\boldsymbol n
 \rangle_{\partial T_i^m}
=0.
\label{eq:full-impedance-II}
\end{equation}

Decompose $\eta_i$ edgewise as $\eta_i=\sum_{j=1}^{J_i}\eta_{i,j},$ $
\operatorname{supp}\eta_{i,j}\subset e_i^j.$
For each $j$, let
$(\boldsymbol q_{i,j},u_{i,j})
\in \BV_h(T_i^m)\times W_h(T_i^m)$ satisfy
\begin{subequations}
\label{edge-impedance}
    \begin{align}
(\boldsymbol q_{i,j},\boldsymbol v_h)_{Q,T_i^m}
-(u_{i,j},\nabla\cdot\boldsymbol v_h)_{T_i^m}
+\frac{i}{k}
 \langle
 \boldsymbol q_{i,j}\cdot\boldsymbol n+\eta_{i,j},
 \boldsymbol v_h\cdot\boldsymbol n
 \rangle_{\partial T_i^m}
&=0,\label{edge-impedance_1}\\
(\nabla\cdot\boldsymbol q_{i,j},p_h)_{T_i^m}
-(k^2u_{i,j},p_h)_{T_i^m}
&=0.\label{edge-impedance_2}
\end{align}
\end{subequations}
Summing \eqref{edge-impedance}
over $j=1,\ldots,J_i$ shows that $\left(
\sum_{j=1}^{J_i}\boldsymbol q_{i,j},
\sum_{j=1}^{J_i}u_{i,j}
\right) $
satisfies \eqref{eq:full-impedance-II} and \eqref{decom_2.2}.
By uniqueness,
\begin{equation}
\boldsymbol q_{h,T_i^m}^{\rm II}
=\sum_{j=1}^{J_i}\boldsymbol q_{i,j},
\qquad
u_{h,T_i^m}^{\rm II}
=\sum_{j=1}^{J_i}u_{i,j}.
\label{eq:sum-edge-solutions}
\end{equation}

For each $j$, define $c_{i,j}
:=
\left.
\frac{i}{k}
\left(
\boldsymbol q_{i,j}\cdot\boldsymbol n+\eta_{i,j}
\right)
\right|_{e_i^j}. $
Since both $\boldsymbol q_{i,j}\cdot\boldsymbol n$ and
$\eta_{i,j}$ are constant on $e_i^j$, $c_{i,j}$ is a scalar.
Moreover, since $\eta_{i,j}=0$ on
$\partial T_i^m\setminus e_i^j$, equation
\eqref{edge-impedance_1} becomes
\begin{align*}
(\boldsymbol q_{i,j},\boldsymbol v_h)_{Q,T_i^m}
-(u_{i,j},\nabla\cdot\boldsymbol v_h)_{T_i^m}
+\langle c_{i,j},\boldsymbol v_h\cdot\boldsymbol n\rangle_{e_i^j}
+\frac{i}{k}
 \langle\boldsymbol q_{i,j}\cdot\boldsymbol n,
        \boldsymbol v_h\cdot\boldsymbol n
 \rangle_{\partial T_i^m\setminus e_i^j}
=0.
\end{align*}
Together with \eqref{edge-impedance_2}, this is exactly the
discrete snapshot problem with boundary value $c_{i,j}$ on $e_i^j$.
By linearity and uniqueness, we have $(\boldsymbol q_{i,j},u_{i,j})
=
c_{i,j}(\boldsymbol\Phi_i^j,\psi_i^j).$
Substitution into \eqref{eq:sum-edge-solutions} yields
$u_{h,T_i^m}^{\rm II}
=
\sum_{j=1}^{J_i}c_{i,j}\psi_i^j, $ 
and hence $u_{h,T_i^m}^{\rm II}\in W_{i,m}^{\rm snap}.$

\end{proof}  
 
In the following, we present the approximation properties of the multiscale space $W_{\rm ms}$, which plays a key role in our analysis. 

\begin{lemma}
\label{lemma40}
Under Assumption~\ref{assumption}, 
 let $u_h\in W_h$ be the solution of (\ref{bf}). There exists $v^{\rm ms}\in W_{\rm ms}$ such that
\begin{align*}
\|u_h-(u_f+v^{\rm ms})\|_{D}
\le 
\frac{H}{\sqrt{\Lambda_{2,m}}}
\left(
\sqrt{2C_{R,m}N_{\rm ov}}(\sqrt{h}+k^{-\frac{1}{2}})\|u_h\|_D
+2\sqrt{\frac{C_{R,m}N_{\rm ov}(H+2mh)h}{k}}\|f\|_{0,\Omega}\right),
\end{align*}
where $\Lambda_{2,m}
=
\min\{\lambda_{l_i+1,2},\ i=1,\cdots,N_T\}. $  
\end{lemma}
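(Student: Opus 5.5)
Using the local splitting of the fine-grid solution introduced above, on each $T_i$ we have $u_h|_{T_i}=u_{h,T_i^m}^{\rm I}|_{T_i}+u_{h,T_i^m}^{\rm II}|_{T_i}$. By \Cref{decom_proof}, $u_{h,T_i^m}^{\rm I}=\psi_{f,i}^m$, so $u_h-u_f=u_h^{\rm II}$ elementwise. Also $u_{h,T_i^m}^{\rm II}=\sum_j c_{i,j}\psi_i^j\in W_{i,m}^{\rm snap}$. The candidate is the truncated spectral expansion. Expand $u_{h,T_i^m}^{\rm II}=\sum_{k=1}^{r_i}d_k^{(i)}\tilde\psi_i^k$ in the $s_i$-orthonormal eigenbasis of \eqref{spe_pro}, keep the first $l_i$ terms, and restrict to $T_i$. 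Summing over $i$ gives $v^{\rm ms}\in W_{\rm ms}$. The error is $u_h-(u_f+v^{\rm ms})=\sum_i \mathbf 1_{T_i}\,\xi_i$, where $\xi_i:=\sum_{k>l_i}d_k^{(i)}\tilde\psi_i^k$ is the spectral tail on $T_i^m$.

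\emph{Local spectral estimate.} By the $s_i$-orthonormality and $a_i$-orthogonality of the eigenfunctions, the standard tail bound gives
\[
s_i(\xi_i,\xi_i)\le \frac{1}{\lambda_{l_i+1,2}}\,a_i(\xi_i,\xi_i)\le \frac{1}{\Lambda_{2,m}}\,a_i\bigl(u_{h,T_i^m}^{\rm II},u_{h,T_i^m}^{\rm II}\bigr).
\]
Since $s_i$ carries the factor $(m+1)/H^2$ in front of an oversampled $D$-norm, the product $(m+1)H^{-2}\times$(local $D$-type norm of $\xi_i$ on $T_i^m$) is controlled by $\Lambda_{2,m}^{-1}a_i(u^{\rm II},u^{\rm II})$. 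I then need the restriction constant $C_{R,m}$ from \eqref{eq:restriction-constant}. It should bound the global $D$-norm of the glued function $\sum_i\mathbf 1_{T_i}\xi_i$, including the jumps across coarse interfaces $E_{i,j}$, by $C_{R,m}\sum_i(m+1)\|\xi_i\|_{D,T_i^m}^2$-type quantities. This gives
\[
\|u_h-(u_f+v^{\rm ms})\|_D^2\le C_{R,m}H^2\Lambda_{2,m}^{-1}\sum_i a_i(u^{\rm II}_{h,T_i^m},u^{\rm II}_{h,T_i^m}).
\]
The interface jumps involve values from two different local tails. I handle them by the triangle inequality, bounding each by the one-sided values, with boundary-cell incidences counted in $N_{\rm ov}$.

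\emph{Bounding $a_i(u^{\rm II},u^{\rm II})$.} I write $u^{\rm II}_{h,T_i^m}=u_h|_{T_i^m}-u^{\rm I}_{h,T_i^m}$ and use $a_i(v,v)\le 2a_i(u_h,u_h)+2a_i(u^{\rm I},u^{\rm I})$. For the source part, \Cref{lem:abc-source-energy} gives $2(H+2mh)hk^{-1}\|f\|_{0,T_i^m}^2$. For $u_h$, I compare $a_i$ with the $D$-norm. Interior edges carry weight $h\le h\cdot 1$, and physical boundary edges carry $h|\alpha|$, so together they give $h\|u_h\|_{D,T_i^m}^2$. The artificial boundary terms carry weight $kh^2|u_h|_{t_e}|^2$. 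Since $kh^2|u|^2=k^{-1}\cdot k^2h^2|u|^2$ and $h^2|u_h|_{t_e}|^2=\|u_h\|_{0,t_e}^2$, these are bounded by $k^{-1}\,k^2\|u_h\|^2_{0,T_i^m}$. Hence
\[
a_i(u_h,u_h)\le (h+k^{-1})\|u_h\|_{D,T_i^m}^2.
\]
Summing over $i$ with the overlap count $N_{\rm ov}$ gives $\sum_i\|u_h\|^2_{D,T_i^m}\le N_{\rm ov}\|u_h\|_D^2$, and likewise for $\|f\|_{0,T_i^m}^2$. Taking square roots and using $\sqrt{h+k^{-1}}\le\sqrt h+k^{-1/2}$ and $\sqrt{a+b}\le\sqrt a+\sqrt b$ yields exactly the stated form with constants $\sqrt{2C_{R,m}N_{\rm ov}}$ and $2\sqrt{C_{R,m}N_{\rm ov}(H+2mh)h/k}$.

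\emph{Main obstacle.} The delicate step is the restriction/gluing estimate. It must pass from the oversampled spectral tail, measured by $s_i$, to the global $D$-norm of the piecewise-defined error, including jumps across coarse interfaces and the $|\alpha|$-weighted physical boundary edges. I expect this is precisely what \eqref{eq:restriction-constant} encodes, as also used in the proof of \Cref{eta_h-eta_ms}. I would reuse that argument, with the factor $(m+1)$ in $s_i$ absorbing the enlargement from $T_i$ to $T_i^m$.
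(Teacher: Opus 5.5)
Your proposal is correct and follows the paper's proof step for step. It uses the identification $u_h-u_f=u_h^{\rm II}$ from \Cref{decom_proof}, the truncated $s_i$-orthonormal expansion, the restriction estimate \eqref{eq:restriction-assembly} combined with the tail bound and $a_i$-orthogonality, the split $a_i(u^{\rm II},u^{\rm II})\le 2a_i(u_h,u_h)+2a_i(u^{\rm I},u^{\rm I})$, \Cref{lem:abc-source-energy}, and the identity $kh^2|u_h|_{t_e}|^2=k\|u_h\|_{0,t_e}^2$ on artificial edges, and it produces exactly the stated constants.

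One small point concerns the quantity you sum over $i$. It should be the part of the global $D$-norm living on $T_i^m$ (interior jumps, physical-boundary terms, and mass), not the paper's $\|\cdot\|_{D,T_i^m}$. That norm contains unit-weighted one-sided artificial-boundary values, which are not controlled by $\|u_h\|_D$. This is exactly why you, like the paper, bound those edges by the mass term, with corner-cell incidences absorbed into $N_{\rm ov}$.
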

\begin{proof}
    For each $T_i\in \mathcal{T}_H$ , the local decomposition holds $u_h=u_{h}^{\mathrm{I}}+u_{h}^{\mathrm{II}}.$  By \cref{decom_proof}, we obtain $u_{h,T_i^m}^{\rm I}=\psi_{f,i}^m, $ $u_{h,T_i^m}^{\mathrm{II}}\in W_{i,m}^{\mathrm{snap}}. $
    So, we can define $v^{\mathrm{snap}}$ by $v^{\mathrm{snap}}|_{T_i}=u_{h,T_i^m}^{\mathrm{II}}|_{T_i}$. 
 By definition of $v^{\mathrm{snap}}$, we have
 \begin{align*}
     \|u_h-(u_f+v^{\mathrm{snap}})\|_{D}
     &=\|(u_h^{\mathrm{I}}+u_h^{\mathrm{II}})-(u_f+v^{\mathrm{snap}})\|_{D}\\
     &\leq  \|u_h^{\mathrm{I}}-u_f\|_{D}+\|u_h^{\mathrm{II}}-v^{\mathrm{snap}}\|_{D}\\
     &\leq \|\sum_{T_i\in \mathcal{T}_H}(u_{h}^{\mathrm{I}}|_{T_i}-u_f|_{T_i})\|_{D}+\|\sum_{T_i\in \mathcal{T}_H}(u_{h}^{\mathrm{II}}|_{T_i}-u_{h,T_i^m}^{\mathrm{II}}|_{T_i})\|_{D}\\
     &\leq\|\sum_{T_i\in \mathcal{T}_H}(u_{h,T_i^m}^{\mathrm{I}}|_{T_i}-u_f|_{T_i})\|_{D}
     +\|\sum_{T_i\in \mathcal{T}_H}(u_{h,T_i^m}^{\mathrm{II}}|_{T_i}-u_{h,T_i^m}^{\mathrm{II}}|_{T_i})\|_{D}  \\
     &=0.
 \end{align*}
Expand the oversampled component in the $s_i$-orthonormal eigenbasis:
\[
v_{i,T_i^m}^{\mathrm{snap}}:=u_{h,T_i^m}^{\mathrm{II}}
=\sum_{j=1}^{r_i}\widehat{v}_{i,j}\tilde{\psi_i^j}.
\]
Its restriction gives
$v^{\mathrm{snap}}=\sum_{i=1}^{N_T}\sum_{j=1}^{r_i}
\widehat{v}_{i,j}\tilde{\psi_i^j}|_{T_i}$, with each restriction
extended by zero outside $T_i$. We then define
$v^{\mathrm{ms}}\in W_{\mathrm{ms}}$ by
\[
v^{\mathrm{ms}}:=\sum_{i=1}^{N_T}v^{ms}_i:=\sum_{i=1}^{N_T}\sum_{j=1}^{l_i}\widehat{v}_{i,j}\tilde{\psi_i^j}|_{T_i}
\]
and $v_{i,T_i^m}^{\mathrm{ms}}=\sum_{j=1}^{l_i}\widehat{v}_{i,j}\tilde{\psi_i^j}$,
 where $l_i \leq r_i$ is the number of eigenfunctions selected for $T_i$. 
Set $e_i^m=v_{i,T_i^m}^{\mathrm{snap}}
-v_{i,T_i^m}^{\mathrm{ms}}$.
The restriction estimate \eqref{eq:restriction-assembly} and spectral
orthogonality give
\begin{align*}
\|v^{\mathrm{snap}}-v^{\mathrm{ms}}\|_D^2
&\le C_{R,m}H^2\sum_i s_i(e_i^m,e_i^m)\\
&\le \frac{C_{R,m}H^2}{\Lambda_{2,m}}\sum_i a_i(e_i^m,e_i^m)\\
&\le \frac{C_{R,m}H^2}{\Lambda_{2,m}}
\sum_i a_i(u_{h,T_i^m}^{\mathrm{II}},u_{h,T_i^m}^{\mathrm{II}})\\
&\le \frac{2C_{R,m}H^2}{\Lambda_{2,m}}
\left(\sum_i a_i(u_h,u_h)
+\sum_i a_i(u_{h,T_i^m}^{\mathrm I},u_{h,T_i^m}^{\mathrm I})\right),
\end{align*}
where $\Lambda_{2,m}
=
\min\{\lambda_{l_i+1,2},\ i=1,\cdots,N_T\}. $
On an artificial patch-boundary edge,
$kh^2|u_h|_{t_e}|^2=k\|u_h\|_{0,t_e}^2$.
Counting these boundary-cell incidences in $N_{\rm ov}$ therefore yields
\[
\sum_i a_i(u_h,u_h)
\le N_{\rm ov}(h+k^{-1})\|u_h\|_D^2.
\]
Furthermore, \cref{lem:abc-source-energy} and finite overlap give
\[
\sum_i a_i(u_{h,T_i^m}^{\mathrm I},u_{h,T_i^m}^{\mathrm I})
\le \frac{2N_{\rm ov}(H+2mh)h}{k}\|f\|_{0,\Omega}^2.
\]
Thus we have
\begin{align*}
\|v^{\mathrm{snap}}-v^{\mathrm{ms}}\|_{D}
\le
\frac{H}{\sqrt{\Lambda_{2,m}}}
\left(
\sqrt{2C_{R,m}N_{\rm ov}}
(\sqrt{h}+k^{-\frac{1}{2}})\|u_h\|_D
+2\sqrt{\frac{C_{R,m}N_{\rm ov}(H+2mh)h}{k}}\|f\|_{0,\Omega}
\right).
\end{align*}
By the following estimates
$$
\|u_h-(u_f+v_{\rm ms})\|_{D}
\leq
\|u_h-(u_f+v^{\mathrm{snap}})\|_{D}
+\|v^{\mathrm{snap}}-v^{ms}\|_{D},
$$ 
we finish the proof.
\end{proof}

With the above preparation, we can derive the final result.
\begin{theorem}
\label{thm:quasi-optimal-ms}
Under the hypotheses of \cref{thm:ms-infsup}, let $u_h\in W_h$
be the solution of \eqref{bf}, and let
$u_{\rm ms}\in W_{\rm ms}$ be the multiscale solution of \eqref{GMs}. Then the corrected multiscale approximation $u_f+u_{\rm ms}$ satisfies
the following error estimate:
\begin{align*}
    \|u_h-(u_f+u_{\rm ms})\|_D
    &\leq C_{\rm app}\frac{H}{\sqrt{\Lambda_{2,m}}}
\Bigg[
\left(
\frac{1+\sqrt{k h}}{\sqrt{k}}
(C_{T,k h}+k^{-1})
+
\sqrt{\frac{(H+2mh)h}{k}}
\right)
\|f\|_{0,\Omega}
\\
&\hspace{4.8cm}
+
(1+\sqrt{k h})(C_{T,k h}+k^{-1})
\|g\|_{0,\partial\Omega}
\Bigg],
\end{align*}
where $\Lambda_{2,m}
=
\min\{\lambda_{l_i+1,2},\ i=1,\cdots,N_T\}$ and $C_{\rm app}$ is independent of $k$ and $h$.
\end{theorem}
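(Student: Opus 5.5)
The plan is the standard Céa-type argument in the indefinite setting: combine the multiscale inf--sup condition of \cref{thm:ms-infsup} with Galerkin orthogonality from \cref{a_and_b}(1) to get quasi-optimality. Then insert the best-approximation bound of \cref{lemma40}, and finally replace $\|u_h\|_D$ by data through \cref{fine-grid-stability}.

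\emph{Quasi-optimality.} Let $v^{\rm ms}\in W_{\rm ms}$ be the function constructed in \cref{lemma40} and set $\xi:=v^{\rm ms}-u_{\rm ms}\in W_{\rm ms}$. Consistency gives, for all $p_{\rm ms}\in W_{\rm ms}$,
\[
a_h(\xi,p_{\rm ms})=a_h\bigl((u_f+v^{\rm ms})-u_h,p_{\rm ms}\bigr).
\]
Hence, by \cref{thm:ms-infsup} and the continuity in \cref{a_and_b}(2),
\[
\beta_{\rm ms}\|\xi\|_D\le\sup_{p_{\rm ms}}\frac{|a_h(\xi,p_{\rm ms})|}{\|p_{\rm ms}\|_D}\le\|u_h-(u_f+v^{\rm ms})\|_D .
\]
The triangle inequality then yields
\[
\|u_h-(u_f+u_{\rm ms})\|_D\le(1+\beta_{\rm ms}^{-1})\,\|u_h-(u_f+v^{\rm ms})\|_D .
\]
Since $\theta_{\rm ms}\le\sqrt2/4$, we have $1+\beta_{\rm ms}^{-1}\lesssim 1+C_{T,kh}k$.

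\emph{Inserting the approximation and stability bounds.} \Cref{lemma40} bounds the best-approximation error by
\[
\frac{H}{\sqrt{\Lambda_{2,m}}}\Bigl(c\,(\sqrt h+k^{-1/2})\|u_h\|_D+c\,\sqrt{(H+2mh)h/k}\,\|f\|_{0,\Omega}\Bigr),
\]
with $c$ depending on $C_{R,m},N_{\rm ov}$. Use the identity $\sqrt h+k^{-1/2}=(1+\sqrt{kh})/\sqrt k$. By \cref{fine-grid-stability},
\[
\|u_h\|_D\lesssim (C_{T,kh}+k^{-1})\|f\|_{0,\Omega}+(C_{T,kh}k^{1/2}+k^{-1/2})\|g\|_{0,\partial\Omega}.
\]
Multiplying by $(1+\sqrt{kh})/\sqrt k$ produces exactly the $f$-term
\[
\frac{1+\sqrt{kh}}{\sqrt k}(C_{T,kh}+k^{-1})\|f\|_{0,\Omega}.
\]
For the $g$-term, note $(C_{T,kh}k^{1/2}+k^{-1/2})/\sqrt k=C_{T,kh}+k^{-1}$, which gives $(1+\sqrt{kh})(C_{T,kh}+k^{-1})\|g\|_{0,\partial\Omega}$. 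The source-corrector term $\sqrt{(H+2mh)h/k}\,\|f\|_{0,\Omega}$ carries over directly. Collecting constants into $C_{\rm app}$ gives the stated form.

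\emph{Main obstacle.} The delicate point is the quasi-optimality prefactor $1+\beta_{\rm ms}^{-1}\sim 1+C_{T,kh}k$, which grows linearly in $k$ and does not appear explicitly in the stated estimate. I would first absorb this factor into $C_{\rm app}$, but since $C_{\rm app}$ is claimed to be independent of $k$, that is not legitimate. The fallback is a refined, Schatz-type argument. In it, one tests the error equation with the adjoint $\eta_h$ from \eqref{eq:ideal-adjoint} and its multiscale approximation $\eta_{\rm ms}$ from \cref{eta_h-eta_ms}. The aim is to bound $k\|\cdot\|_{0}$ of the error by $\theta_{\rm ms}$ times its $D$-norm, and to control the remainder through the coercivity of $b_h$ from \cref{a_and_b}(3). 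Concretely, write $b_h(e,e)=a_h(e,e)+2k^2\|e\|_0^2$ for the error $e:=u_h-(u_f+u_{\rm ms})$. Galerkin orthogonality lets one replace $a_h(e,e)$ by $a_h(e,u_h-(u_f+v^{\rm ms}))$. The term $2k^2\|e\|_0^2$ is handled by duality with $\eta_h(e)$, which costs $\|e\|_D\,\theta_{\rm ms}$ and is absorbed under $\theta_{\rm ms}\le\sqrt2/4$. This gives a $k$-independent quasi-optimality constant, and I expect the verification that the adjoint approximation of \cref{eta_h-eta_ms} applies to arbitrary $e\in W_h$, not only to $w_{\rm ms}\in W_{\rm ms}$, to be the main technical step.
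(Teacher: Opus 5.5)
Your final argument is correct, but it takes a different route from the paper. You rightly discard the naive C\'ea bound, whose factor $1+\beta_{\rm ms}^{-1}\sim 1+C_{T,kh}k$ is $k$-dependent. You then apply the coercivity of $b_h$ directly to the full error $e$. Galerkin orthogonality gives $a_h(e,e)=a_h(e,u_h-(u_f+v^{\rm ms}))$ and $2k^2\|e\|_{0,\Omega}^2=a_h(e,\eta_h(e)-\eta_{\rm ms}(e))$, so $(\tfrac{1}{\sqrt2}-\theta_{\rm ms})\|e\|_D\le\|u_h-(u_f+v^{\rm ms})\|_D$.

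The paper instead introduces the $b_h$-projection $R^b_{\rm ms}$ and splits $e_h=\xi_h+\xi_{\rm ms}$, with $\xi_h=\widetilde u_h-R^b_{\rm ms}\widetilde u_h$. Coercivity is applied only to $\xi_{\rm ms}\in W_{\rm ms}$, using $b_h(\xi_{\rm ms},\xi_{\rm ms})=2k^2(e_h,\xi_{\rm ms})_\Omega$. The duality step therefore only needs the adjoint with data $\xi_{\rm ms}\in W_{\rm ms}$, which is exactly the setting of Lemma~\ref{eta_h-eta_ms} as stated. That is what the extra projection buys: the adjoint lemma can be cited without reopening it.

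Your version is shorter and gives the identical constant $\sqrt2/(1-\sqrt2\theta_{\rm ms})\le2\sqrt2$, but it needs the adjoint approximation for data $e\in W_h$. The step you flag as the main technical obstacle only requires inspecting the proof of Lemma~\ref{eta_h-eta_ms}. There the data enters through $\|w\|_{0,T_i^m}$ in Lemma~\ref{first_comp} and through $k\|w\|_{0,\Omega}\le\|w\|_D$. It also enters through the fine-grid bound $\|\eta_h\|_D\le4\sqrt2(1+C_{T,kh}k)\|w\|_D$ obtained from Theorem~\ref{thm:main-infsup}. The snapshot representation of the locally homogeneous component does not depend on the source at all. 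None of this uses $w\in W_{\rm ms}$, so the lemma, with the same $\theta_{\rm ms}$, holds verbatim for every $w\in W_h$.

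The remaining bookkeeping matches the paper: Lemma~\ref{lemma40}, $\sqrt h+k^{-1/2}=(1+\sqrt{kh})/\sqrt k$, and Corollary~\ref{fine-grid-stability}. In a write-up, drop the C\'ea step entirely, since it is superseded.
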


\begin{proof}
Define the $b_h$-projection
$R_{\rm ms}^b:W_h\rightarrow W_{\rm ms}$ by
\[
b_h(R_{\rm ms}^bw_h,v_{\rm ms})
=
b_h(w_h,v_{\rm ms}),
\qquad
\forall v_{\rm ms}\in W_{\rm ms}.
\]
By the coercivity and continuity of $b_h$, for any
$v_{\rm ms}\in W_{\rm ms}$,
\begin{align*}
\frac{1}{\sqrt{2}}\|w_h-R_{\rm ms}^bw_h\|_D^2
&\leq
|b_h(w_h-R_{\rm ms}^bw_h,w_h-R_{\rm ms}^bw_h)|\\
&= 
|b_h(w_h-R_{\rm ms}^bw_h,w_h-v_{\rm ms})|\\
&\leq
\|w_h-R_{\rm ms}^bw_h\|_D
\|w_h-v_{\rm ms}\|_D.
\end{align*}
Hence,
\begin{align*}
\|w_h-R_{\rm ms}^bw_h\|_D
\le
\sqrt{2}
\inf_{v_{\rm ms}\in W_{\rm ms}}
\|w_h-v_{\rm ms}\|_D.
\label{eq:b-projection-best}
\end{align*}

Set
$\widetilde u_h:=u_h-u_f$ and
$e_h:=\widetilde u_h-u_{\rm ms}$, and write
\begin{equation}
e_h
=
\underbrace{\widetilde u_h-R_{\rm ms}^b\widetilde u_h}_{\xi_h}
+
\underbrace{R_{\rm ms}^b\widetilde u_h-u_{\rm ms}}_{\xi_{\rm ms}}.
\label{eq:error-b-decomposition}
\end{equation}
By the definition of $R_{\rm ms}^b$,
$b_h(\xi_h,\xi_{\rm ms})=0$. Moreover, the Galerkin orthogonality gives
$a_h(e_h,v_{\rm ms})=0$ for all $v_{\rm ms}\in W_{\rm ms}$. Therefore,
using
$b_h(v,w)=a_h(v,w)+2k^2(v,w)_\Omega$,
\begin{align*}
\frac{1}{\sqrt{2}}\|\xi_{\rm ms}\|_D^2
&\leq
|b_h(\xi_{\rm ms},\xi_{\rm ms})|
=
|b_h(e_h,\xi_{\rm ms})|
=
2k^2|(e_h,\xi_{\rm ms})_\Omega|.
\end{align*}
For $\xi_{\rm ms}\in W_{\rm ms}$, let
$T(\xi_{\rm ms})\in W_h$ be the solution 
\[
a_h(v_h,T(\xi_{\rm ms}))
=
2k^2(v_h,\xi_{\rm ms})_\Omega,
\qquad
\forall v_h\in W_h,
\]
By \cref{eta_h-eta_ms}, there exists
$T_{\rm ms}(\xi_{\rm ms})\in W_{\rm ms}$ such that
\begin{align*}
2k^2|(e_h,\xi_{\rm ms})_\Omega|
=
|a_h\!\left(
e_h,
T(\xi_{\rm ms})-T_{\rm ms}(\xi_{\rm ms})
\right)|
\leq
\theta_{\rm ms}
\|e_h\|_D\|\xi_{\rm ms}\|_D.
\end{align*}
Consequently, $\|\xi_{\rm ms}\|_D
\leq
\sqrt{2}\theta_{\rm ms}\|e_h\|_D.$ 
Combining this estimate with
\eqref{eq:error-b-decomposition}, we obtain $\|e_h\|_D
\leq
\|\xi_h\|_D
+
\sqrt{2}\theta_{\rm ms}\|e_h\|_D.$ 
When $\theta_{\rm ms}\le \frac{\sqrt{2}}{4}$, using \cref{lemma40} and \cref{fine-grid-stability}, we obtain 
\begin{align*}
\|e_h\|_D
&\leq
\frac{1}{1-\sqrt{2}\theta_{\rm ms}}
\|\widetilde u_h-R_{\rm ms}^b\widetilde u_h\|_D
\nonumber\\
&\le
\frac{\sqrt{2}}
{\bigl(1-\sqrt{2}\theta_{\rm ms}\bigr)}
\inf_{v_{\rm ms}\in W_{\rm ms}}
\|\widetilde u_h-v_{\rm ms}\|_D\\
&\le 4\frac{H}{\sqrt{\Lambda_{2,m}}}
\left(\sqrt{C_{R,m}N_{\rm ov}}
(\sqrt{h}+k^{-\frac{1}{2}})\|u_h\|_D
+\sqrt{\frac{2C_{R,m}N_{\rm ov}(H+2mh)h}{k}}\|f\|_{0,\Omega}\right)\\
&\le
C_{\rm app}\frac{H}{\sqrt{\Lambda_{2,m}}}
\Bigg[
\left(
\frac{1+\sqrt{k h}}{\sqrt{k}}
(C_{T,k h}+k^{-1})
+
\sqrt{\frac{(H+2mh)h}{k}}
\right)
\|f\|_{0,\Omega}
\nonumber\\
&\hspace{4.8cm}
+
(1+\sqrt{k h})(C_{T,k h}+k^{-1})
\|g\|_{0,\partial\Omega}
\Bigg],
\end{align*}
where $C_{\rm app}$ is independent of $k$ and $h$. This yields the desired estimate and completes the proof.
\end{proof}

	\section{Numerical examples}
    \label{sec-6}
	The purpose of this section is to validate the proposed method as an accurate and
efficient reduced model for high-frequency Helmholtz problems.  To this end, we use
homogeneous and strongly heterogeneous examples in two and three dimensions.  We
first determine how the oversampling size and local spectral dimension control the
approximation error.  Having identified representative low-error spaces, we then
compare the reduced and fine-grid models in terms of global dimension, matrix
construction time, and direct solution time.

To make the accuracy comparison precise, let $u_h\in W_h$ be the fine-grid
reference solution and let
$u_f+u_{\mathrm{ms}}$ be the total multiscale
approximation.  We report
\[
 e_{L^2}
 =\frac{\lVert u_h-(u_f+u_{\mathrm{ms}})\rVert_{0,\Omega}}
        {\lVert u_h\rVert_{0,\Omega}},
 \qquad
 e_D
 =\frac{\lVert u_h-(u_f+u_{\mathrm{ms}})\rVert_D}
        {\lVert u_h\rVert_D},
\]
where $\lVert\cdot\rVert_D$ is the discrete energy norm defined in Section~\ref{sec-4};
in heterogeneous media, its wavenumber and boundary weights are evaluated
locally.
Since the wavelength varies with the medium, we characterize the worst resolved
part of each model by
\[
 \mathrm{ppw}:=\min_{T\in\mathcal{T}_h}
 \frac{2\pi}{k_T h_T},
\]
where $h_T$ is the local fine-grid spacing and
$k_T=\max_{\boldsymbol{x}\in T}k(\boldsymbol{x})$.  Thus, for the
homogeneous media and uniform grids considered here,
$\mathrm{ppw}=2\pi/(k h)$.

For the computational comparison, matrix construction and linear-system solution
are timed separately.
$T_{\mathrm{mat}}^h$ denotes the distributed generation of matrix entries and
global assembly of the fine-grid matrix.  The time
$T_{\mathrm{mat}}^{\mathrm{ms}}$ includes all source-independent work needed
to construct the reduced matrix: local snapshot solves, local spectral
problems, basis construction, and Galerkin assembly.  The fine-grid solution
time $T_{\mathrm{solve}}^h$ includes the direct solver's symbolic analysis,
numerical LU factorization, and forward and backward substitutions for the
reported right-hand side.  The reduced solution time
$T_{\mathrm{solve}}^{\mathrm{ms}}$ includes the same three direct-solver
stages for the reduced system. The source-dependent local particular solves
and final reconstruction are measured separately and are not included in
$T_{\mathrm{solve}}^{\mathrm{ms}}$. Each compute node
has two 32-core Intel Xeon Platinum 8358P processors, for 64 CPU cores per
node, together with 512 GB of memory; the processor base frequency is
2.60 GHz.  All computations use single precision.  The global fine and reduced
systems are solved by
full-rank CMUMPS, whereas the independent local
snapshot problems use full-rank CMUMPS in two dimensions and CMUMPS-BLR with
tolerance $10^{-5}$ in three dimensions.

\subsection{Two-dimensional examples}
\label{sec:example-2d}

We first examine oversampling in a homogeneous medium and then consider
the strongly heterogeneous Marmousi model. The first problem is posed on
$\Omega=(0,1)^2$ with unit wave
speed, a point source at $\boldsymbol{x}_s=(0.5,0.5)$, and a first-order
absorbing boundary condition.  The $RT_0$--$P_0$ method with velocity
elimination uses a $2000\times2000$ fine grid and a $50\times50$ coarse grid.
Consequently,
\[
 h=1/2000,\qquad H=1/50,\qquad H/h=40,\qquad
 \dim(W_h)=4{,}000{,}000.
\]
We set $\mathrm{ppw}=10$, or $k=400\pi$, and retain
$l_i=42,44,46,48$ local modes to examine the interplay between spectral
dimension and oversampling at the fixed ratio $H/h=40$.

We then retain \(H/h=40\) and introduce strong heterogeneity using the
Marmousi velocity model \cite{Versteeg1994}, resampled onto a
$2400\times760$ fine grid of spacing $h=4\,\mathrm m$, with a
$60\times19$ coarse partition. In horizontal-distance and depth
coordinates, the domain is $(0,9600)\times(0,3040)\,\mathrm m^2$,
and the source is at $(4800,40)\,\mathrm m$, with depth measured downward
from the top boundary. Thus
$H/h=40$ and $\dim(W_h)=1{,}824{,}000$.  The minimum resolution is
$\mathrm{ppw}=10$. In both examples, fine-edge boundary traces are used, a local
particular solution accounts for the source, and $m=0,\ldots,6$ denotes the
number of oversampling layers.

Figure~\ref{fig:2d-over} shows a steady error reduction for the homogeneous
problem as the local regions grow.  At $m=6$, increasing $l_i$ from $42$ to
$46$ lowers $e_{L^2}$ from $0.00878$ to $0.00443$. The Marmousi example
likewise benefits from oversampling: at $m=6$ and $l_i=46$,
the relative $L^2$ and $D$-norm errors are $0.00695$ and $0.00798$.
Thus, oversampling yields sub-percent $L^2$ error in both the homogeneous
and strongly heterogeneous examples at the same fine-to-coarse grid ratio.

\begin{figure}[!htbp]
    \centering
    \includegraphics[width=0.96\textwidth]
    {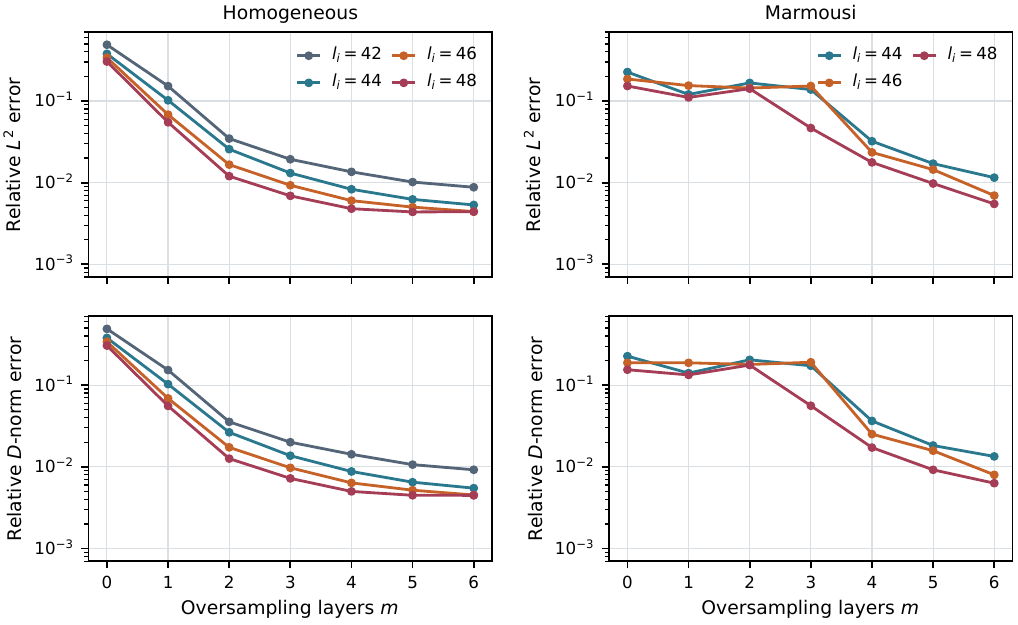}
    \caption{Relative $L^2$ (top) and $D$-norm (bottom) errors versus
    oversampling layers $m$ for the homogeneous and Marmousi problems.
    Both have $\mathrm{ppw}=10$.}
    \label{fig:2d-over}
\end{figure}

Table~\ref{tab:2d-results} reports representative low-error configurations
and compares their costs with the corresponding fine-grid solves.

\begin{table}[htbp]
    \centering
    \small
    \setlength{\tabcolsep}{2.8pt}
    \caption{Two-dimensional fine and multiscale solutions: dimensions,
    timings in seconds, and relative errors at $\mathrm{ppw}=10$ using
    64 CPU cores. The multiscale rows use $m=6$ and $l_i=46$;
    Homo. and Marm. denote the homogeneous and Marmousi problems.}
    \label{tab:2d-results}
    \begin{tabular}{ccccccc}
        \toprule
        case & model & dimension & $T_{\mathrm{mat}}$ &
        $T_{\mathrm{solve}}$ & $e_{L^2}$ & $e_D$ \\
        \midrule
        \multirow{2}{*}{Homo.} & fine & $4{,}000{,}000$ & 0.07 & 27.16 & / & / \\
        & multiscale & $115{,}000$ & 2.59 & 3.69 & 0.00443 & 0.00452 \\
        \midrule
        \multirow{2}{*}{Marm.} & fine & $1{,}824{,}000$ & 0.03 & 12.18 & / & / \\
        & multiscale & $52{,}440$ & 22.70 & 1.22 & 0.00695 & 0.00798 \\
        \bottomrule
    \end{tabular}
\end{table}

\begin{figure}[htbp]
    \centering
    \includegraphics[width=0.96\textwidth]
    {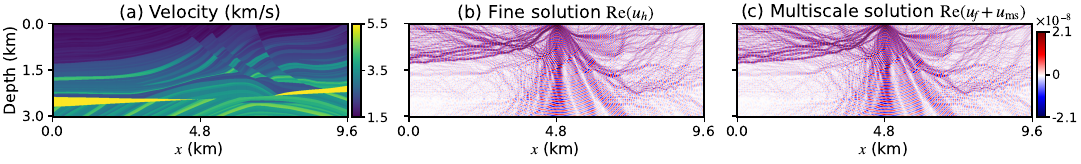}
    \caption{Marmousi example with $\mathrm{ppw}=10$: (a) velocity,
    (b) $\operatorname{Re}(u_h)$, and (c) $\operatorname{Re}(u_f+u_{\mathrm{ms}})$
    for $m=6$ and $l_i=46$.}
    \label{fig:marmousi-fields}
\end{figure}

Figure~\ref{fig:marmousi-fields} confirms that the reduced field reproduces
the principal wavefronts in the heterogeneous medium.  In both examples, the
reduced space has $2.875\%$ of the fine-grid dimension.  The direct solve of
the homogeneous reduced system takes $3.69$ seconds, compared with $27.16$
seconds on the fine grid; the corresponding Marmousi times are $1.22$ and
$12.18$ seconds.  Local construction is source independent and can be reused
for new right-hand sides at fixed medium and frequency.
\FloatBarrier

\subsection{Three-dimensional homogeneous problem}
\label{sec:homo-3d}

We next assess the accuracy and efficiency of the reduced model in three
dimensions. We take $\Omega=(0,1)^3$, unit wave speed, a point source at
$\boldsymbol{x}_s=(0.5,0.5,0.5)$, and the same absorbing boundary condition.
The fine grid contains $100^3$ cells, while the coarse grid contains $10^3$
elements.  Hence
\[
 h=\frac{1}{100},\qquad H=\frac{1}{10},\qquad
 \dim(W_h)=1{,}000{,}000,\qquad H/h=10.
\]
We set $\mathrm{ppw}=10$, corresponding to $k=20\pi$, and vary
$m=0,\ldots,3$ with $l_i=54$. We first use fine-grid boundary traces
and then assess a polynomial trace space for more efficient local
construction.

The curves in Figure~\ref{fig:homo-3d-over} fall rapidly as
the oversampled region expands.  At $m=3$, the relative $L^2$ and $D$-norm
errors are $0.0452$ and $0.0554$. This accuracy provides a baseline for
assessing the smaller polynomial trace space.

\begin{figure}[H]
    \centering
    \includegraphics[width=0.70\textwidth]
    {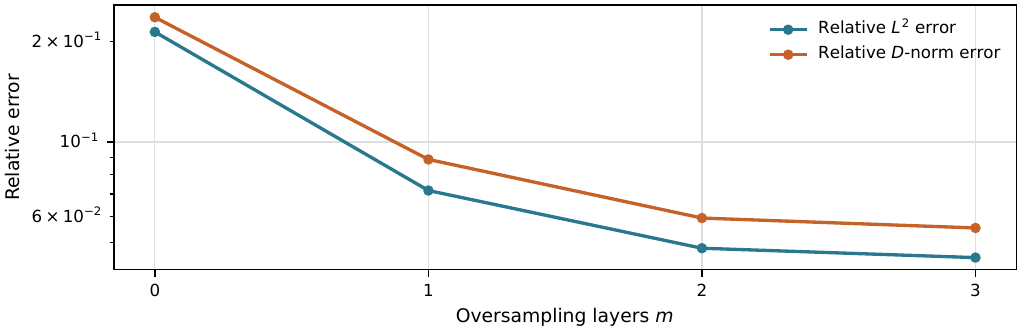}
    \caption{Three-dimensional homogeneous problem at $\mathrm{ppw}=10$:
    relative errors versus $m$ with $l_i=54$.}
    \label{fig:homo-3d-over}
\end{figure}
\FloatBarrier

Table~\ref{tab:homo-3d-results} compares fine-grid boundary traces
with a polynomial-face construction at the same resolution and $m=3$.

\begin{table}[htbp]
    \centering
    \footnotesize
    \setlength{\tabcolsep}{2.9pt}
    \caption{Three-dimensional homogeneous problem: fine and multiscale
    dimensions, timings in seconds, and errors at $\mathrm{ppw}=10$ using
    64 CPU cores. The multiscale rows use $m=3$ and $l_i=54$.}
    \label{tab:homo-3d-results}
    \begin{tabular}{cccccc}
        \toprule
        model & dimension & $T_{\mathrm{mat}}$ &
        $T_{\mathrm{solve}}$ & $e_{L^2}$ & $e_D$ \\
        \midrule
        fine & $1{,}000{,}000$ & 0.01 & 23.41 & / & / \\
        fine-grid traces & $54{,}000$ & 616.67 & 3.24 & 0.04518 & 0.05540 \\
        $\mathbb Q_3$ snapshot & $54{,}000$ & 3.78 & 4.10 & 0.02957 & 0.03761 \\
        \bottomrule
    \end{tabular}
\end{table}

Both reduced systems retain $5.4\%$ of the fine-grid unknowns. The
$\mathbb Q_3$ traces reduce source-independent construction from $616.67$
to $3.78$ seconds in this example while lowering the relative $L^2$ error
from $0.04518$ to $0.02957$. The reduced direct solves take $3.24$ and
$4.10$ seconds, respectively, compared with $23.41$ seconds for the fine
system. Polynomial traces therefore reduce the construction cost while
retaining an accurate reduced solution in this test.

\subsection{Global PML truncation}
\label{sec:overthrust-pml-3d}

The preceding examples truncate the open domain with a first-order
absorbing boundary condition (ABC). We now consider a global perfectly
matched layer (PML) to test the reduction with a different treatment of
outgoing waves. The PML modifies the fine-grid operator through complex
stretching, and its effect is retained in the reduced problem by Galerkin
projection of that operator.

For this comparison we use the heterogeneous Overthrust velocity model
\cite{AminzadehBracKunz1997}, in which wave speed varies from $2.184$ to
$6.000\,\mathrm{km/s}$. The physical domain is discretized into
$504\times504\times120$ cells of width $h=40\,\mathrm{m}$, with a source
at $(2.48,2.48,0.48)\,\mathrm{km}$. With ABC, this domain is partitioned
into $42\times42\times10$ coarse elements. For the PML experiment, the
same physical domain is surrounded by 24 fine cells on the lateral sides
and bottom and 12 on the top. The extended fine and coarse grids are then
$552\times552\times156$ and $46\times46\times13$, respectively. Both
experiments use $H/h=12$, $\mathrm{ppw}=10$, and $m=3$.

We construct the local spaces from the polynomial boundary traces in
\eqref{polynomial-snapshot} with $p=3$; in the PML case, the oversampled
local problems have an additional four-cell absorbing layer. The damping
profile is $\sigma(d)=C\delta^{-1}(d/\delta)^2$, where $d\in[0,\delta]$
is the distance into a layer of width $\delta$; we use $C=20$ globally
and $C=8$ in the auxiliary local layers. Because
the local wavelength changes with velocity, we also compare uniform
mode counts with a source-independent allocation proportional to the
coarse-element mean wavenumber
$\overline{k}_i=|T_i|^{-1}\int_{T_i}k(\boldsymbol{x})\,d\boldsymbol{x}$.
The bounds and target average dimension are fixed for each allocation.
Table~\ref{tab:overthrust-adaptive} reports errors on the physical domain
together with the dimensions and costs of the fine and reduced systems.
For PML, the reported $D$-error uses the interior jump and wavenumber-weighted
mass terms on that domain.

\enlargethispage{5\baselineskip}
\begin{table}[H]
    \centering
    \small
    \setlength{\tabcolsep}{3.2pt}
    \caption{Overthrust model: fine-grid baselines and reduced models with
    uniform or wavenumber-adaptive basis allocation for ABC and PML
    truncations. Here $\overline{l}$ denotes the average per coarse element,
    and all times are in seconds.}
    \label{tab:overthrust-adaptive}
    \begin{tabular}{cccccccccc}
        \toprule
        boundary & model & $\overline{l}$ & $l_i$ range &
        dimension & cores & $T_{\mathrm{mat}}$ &
        $T_{\mathrm{solve}}$ & $e_{L^2}$ & $e_D$ \\
        \midrule
        \multirow{3}{*}{ABC}
          & fine     & \multicolumn{1}{c}{/} & \multicolumn{1}{c}{/} & $30{,}481{,}920$ & 1024 & 0.06 & 1421.12 & \multicolumn{1}{c}{/} & \multicolumn{1}{c}{/} \\
          & uniform  & 54 & 48--54 & $952{,}512$ & 128 & 54.38 & 73.81 & 0.01537 & 0.02732 \\
          & adaptive & 46 & 32--70 & $811{,}440$ & 128 & 53.40 & 63.40 & 0.02207 & 0.02856 \\
        \noalign{\vskip 1pt}
        \cdashline{1-10}
        \noalign{\vskip 2pt}
        \multirow{3}{*}{PML}
          & fine     & \multicolumn{1}{c}{/} & \multicolumn{1}{c}{/} & $47{,}533{,}824$ & 1024 & 0.07 & 5462.37 & \multicolumn{1}{c}{/} & \multicolumn{1}{c}{/} \\
          & uniform  & 54 & 48--54 & $1{,}485{,}384$ & 256 & 140.36 & 144.54 & 0.01754 & 0.03101 \\
          & adaptive & 46 & 33--70 & $1{,}265{,}368$ & 256 & 139.46 & 116.37 & 0.01923 & 0.03054 \\
        \bottomrule
    \end{tabular}
\end{table}

The reduction in global dimension and direct-solve time persists for both
boundary treatments (Table~\ref{tab:overthrust-adaptive}). With adaptive
allocation, the globally coupled dimension falls from $30.5$ to $0.81$
million for ABC and from $47.5$ to $1.27$ million for PML. Using 128 cores
for ABC and 256 for PML, rather than the 1024 cores used for each fine
problem, the reduced systems cut direct-solve times from $1421.12$ to
$63.40$ seconds and from $5462.37$ to $116.37$ seconds, respectively.
The adaptive models retain relative $L^2$ errors near $2\%$. The PML sections in
Figure~\ref{fig:overthrust-pml-fields} show that the reduced solution
retains the main wavefronts of the fine solution.

\begin{figure}[!htbp]
    \centering
    \makebox[\textwidth][c]{\includegraphics[width=1.06\textwidth]
    {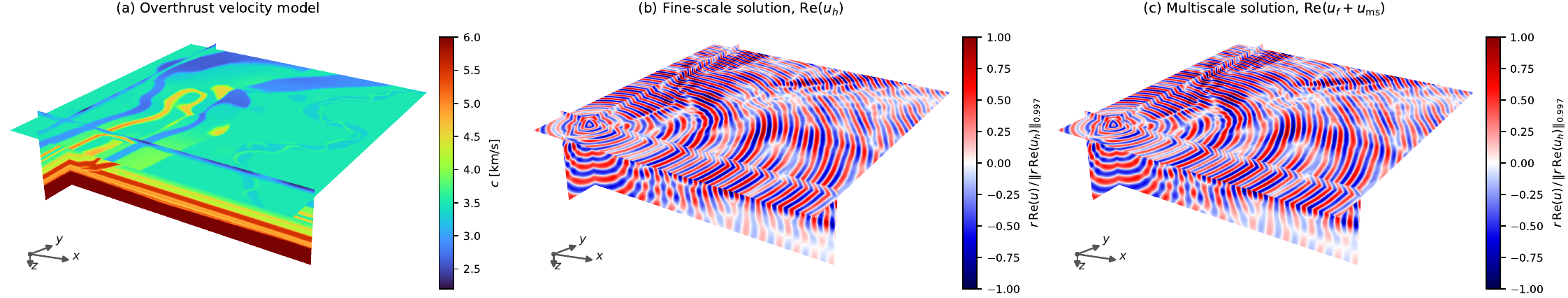}}
    \caption{Overthrust PML problem: source-crossing sections of (a) velocity,
    (b) the direct solution, and (c) the adaptive multiscale solution for
    $m=3$ and $33\leq l_i\leq70$.  The wavefields share the same normalization
    and color scale.}
    \label{fig:overthrust-pml-fields}
\end{figure}
\FloatBarrier
	
	\section{Conclusion}
    \label{sec-7}
We have developed and analyzed a mixed multiscale method for high-frequency
Helmholtz problems as a reusable reduced model for repeated-source
simulations. Mass lumping permits local elimination of the
Raviart--Thomas velocity, leaving a cell-centered pressure system. Local
Dirichlet--impedance solves provide wave-adapted snapshots, which are
compressed spectrally and restricted to individual coarse elements. Thus,
the enlarged regions used to construct the basis do not enlarge its
support in the reduced system, whose coupling remains sparse. Local
particular solutions account for changes in the source without changing
the reduced operator.

The analysis establishes inf--sup stability of the fine-grid and
multiscale problems and bounds the error of the
source-corrected solution in terms of oversampling and the first omitted
local eigenvalue. Numerical experiments in two and three dimensions show
accurate reduced wavefields in homogeneous and heterogeneous media, with
substantially smaller global systems and direct-solve costs, including for
a globally PML-truncated problem. Since the reduced operator is independent
of the source, this reduction is particularly valuable for repeated-source
Helmholtz modeling.

\appendix

\section*{Appendix}

\refstepcounter{section}
\label{proof_A}

\noindent
\textbf{Appendix \thesection. Proof of \Cref{well_posed_of_snap}.}
\par\medskip
We first prove uniqueness. Let \(\zeta=0\) and suppose that
\((\boldsymbol\Phi_h,\psi_h)\) solves the corresponding problem. Taking $\boldsymbol v_h=\boldsymbol\Phi_h$ and $p_h=\psi_h, $ we obtain
\[
\|\boldsymbol\Phi_h\|_{Q,T_i^m}^{2}
-k^2\|\psi_h\|_{0,T_i^m}^{2}
+\frac{i}{k}
\|\boldsymbol\Phi_h\cdot\boldsymbol n\|_{0,\partial T_i^m\setminus e_i^j}^{2}
=0.
\]
Taking the imaginary part yields $\boldsymbol\Phi_h\cdot\boldsymbol n=0,$ on $\partial T_i^m \setminus e_i^j.$ Let \(\{\boldsymbol v_e\}_{e\in\mathcal E_h(T_i^m)}\) be the standard
\(RT_0\) edge basis normalized by $\int_e\boldsymbol v_e\cdot\boldsymbol n_e\,ds=1.$ We can have $\boldsymbol\Phi_h
=
\sum_{e\in\mathcal E_h(T_i^m)}q_e\boldsymbol v_e.$

Without loss of generality, after a rotation of the coordinate
system, assume that \(e_i^j\) is contained in the upper side of
\(T_i^m\). Denote the fine-grid cells by
\[
t_{r,s},
\quad
1\le r\le N_x,\quad 1\le s\le N_y,
\]
where \(s=1\) is the bottom row. 
For every fine-grid edge \(e\) on the bottom side of \(T_i^m\), we
have \(e\in \partial T_i^m\setminus e_i^j\), and hence \(q_e=0\). Taking
\(\boldsymbol v_h=\boldsymbol v_e\) in the first discrete equation
gives
\[
\frac{1}{2}q_e-(\psi_h,\nabla\!\cdot\boldsymbol v_e)_{t_{r,1}}=0.
\]
it follows
that $\psi_h|_{t_{r,1}}=0,$ $r=1,...,N_x$

We now proceed by induction over the rows. Assume that
\[
\psi_h|_{t_{r,s}}=0,
\quad r=1,\ldots,N_x,
\]
and that all fluxes across the lower horizontal edges of the
\(s\)-th row vanish. For every vertical interior edge in the
\(s\)-th row, the two adjacent cell values are zero; hence, the
corresponding edge equation implies that its flux is zero. The
fluxes on the left and right boundaries also vanish since these
edges belong to \(\partial T_i^m\setminus e_i^j\).
So taking \(p_h=\chi_{t_{r,s}}\) in the second discrete equation
yields
\[
\sum_{e\subset\partial t_{r,s}}
q_e
=k^2\psi_h|_{t_{r,s}}h^2=0.
\]
All terms except the flux through the upper horizontal edge are
zero. Therefore, the upper flux is also zero. Applying the first
discrete equation to this horizontal edge gives $\psi_h|_{t_{r,s+1}}=0.$ 
By induction, we obtain 
\[
\psi_h|_t=0,
\quad
\forall t\in\mathcal T_h(T_i^m).
\]The edge equations then imply that all interior fluxes vanish.
The flux on \(\partial T_i^m\setminus e_i^j\) is already zero. Finally, applying the
cell conservation equation to the cell adjacent to \(e_i^j\)
shows that the flux through \(e_i^j\) also vanishes. Thus $\boldsymbol\Phi_h=0.$ Consequently, the homogeneous problem has only the trivial
solution.

It remains to prove existence. Set
\[
X_h:=\boldsymbol V_h(T_i^m)\times W_h(T_i^m)
\]
and define the discrete operator
\(\mathcal A_h:X_h\to X_h'\) by
\[
\langle\mathcal A_hx_h,y_h\rangle
:=
\mathcal B_h(x_h,y_h),
\qquad
x_h,y_h\in X_h,
\]
where \(\mathcal B_h\) denotes the discrete bilinear form above.
The uniqueness result for the homogeneous problem implies
\[
\ker\mathcal A_h=\{0\},
\]
so that \(\mathcal A_h\) is injective. Since \(X_h\) is
finite-dimensional and $\dim X_h=\dim X_h',$ it follows that \(\mathcal A_h\) is also
surjective. Hence, for every linear functional in \(X_h'\), and
in particular for
\[
F_\zeta(\boldsymbol v_h,p_h)
:=
-\langle\zeta,\boldsymbol v_h\cdot\boldsymbol n\rangle_{e_i^j},
\]
there exists a solution. Thus the solution is unique.

\refstepcounter{section}
\label{proof_B}
\par\bigskip
\noindent
\textbf{Appendix \thesection. Proof of \Cref{cell-edge-average-H1}.}
\par\medskip

Without loss of generality, we work on
$t=(0,h)^2$ and take
$e=(0,h)\times\{0\}$ as the bottom edge. For smooth functions,
\begin{align*}
v_t^{\rm av}-v_e^{\rm av}
=\frac1{h^2}\int_0^h\int_0^h\bigl(v(x,y)-v(x,0)\bigr)\,dy\,dx
=\frac1{h^2}\int_0^h\int_0^h(h-t)\,\partial_yv(x,t)\,dt\,dx.
\end{align*}
Hence, by Cauchy--Schwarz inequality,
\begin{align*}
|v_t^{\rm av}-v_e^{\rm av}|
\leq \frac1{h^2}
\left(\int_0^h\int_0^h(h-t)^2\,dt\,dx\right)^{1/2}
\|\partial_yv\|_{0,t}
=\frac1{\sqrt3}\|\partial_yv\|_{0,t}
\leq \frac1{\sqrt3}\|\nabla v\|_{0,t}.
\end{align*}
Density of $C^\infty(\overline t)$ in $H^1(t)$ extends the estimate to all $v\in H^1(t)$.

\refstepcounter{section}
\label{proof_C}
\par\bigskip
\noindent
\textbf{Appendix \thesection. Proof of \Cref{lem:flux}.}
\par\medskip
We present the argument for a vertical interior edge; the horizontal
case follows in exactly the same manner. Let
$e=\partial t^-\cap\partial t^+$ and choose the unit normal
$\Bn_e$ pointing from $t^-$ to $t^+$, so that
$\llbracket P_h^0v\rrbracket_e
=(P_h^0v)|_{t^-}-(P_h^0v)|_{t^+}$.
After a translation, we may assume that
\[
t^-=(-h,0)\times(0,h),\qquad
t^+=(0,h)\times(0,h),\qquad
e=\{0\}\times(0,h),
\]
with $\Bn_e=(1,0)^{\top}$. Set
$\omega_e:=t^-\cup t^+$.

Consider the reference patch
$\widehat\omega=(-1,1)\times(0,1)$, with
$\widehat t^-=(-1,0)\times(0,1)$,
$\widehat t^+=(0,1)\times(0,1)$, and
$\widehat e=\{0\}\times(0,1)$. For
$\widehat v\in H^2(\widehat\omega)$, define
\begin{equation*}
\widehat{\mathcal L}(\widehat v)
:=
\int_{\widehat t^-}\widehat v\,d\widehat x
-
\int_{\widehat t^+}\widehat v\,d\widehat x
+
\int_{\widehat e}
\partial_{\widehat x_1}\widehat v\,d\widehat s .
\end{equation*}
By the trace theorem, $\widehat{\mathcal L}$ is a bounded linear
functional on $H^2(\widehat\omega)$. Moreover, for any
$\widehat p(\widehat x)=a+b\widehat x_1+c\widehat x_2
\in\mathbb P_1(\widehat\omega)$, a direct calculation gives
\[
\int_{\widehat t^-}\widehat p\,d\widehat x
-
\int_{\widehat t^+}\widehat p\,d\widehat x
=-b
=-
\int_{\widehat e}
\partial_{\widehat x_1}\widehat p\,d\widehat s,
\]
and hence $\widehat{\mathcal L}(\widehat p)=0$ for all
$\widehat p\in\mathbb P_1(\widehat\omega)$. The Bramble--Hilbert lemma \cite{BrambleHilbert1970}
therefore yields
\begin{equation}
|\widehat{\mathcal L}(\widehat v)|
\le C_{\rm bh}
|\widehat v|_{2,\widehat\omega}.
\label{eq:BH-flux-local}
\end{equation}

Let $x=x_e+h\widehat x$ and set
$\widehat v(\widehat x):=v(x_e+h\widehat x)$. Since
$|t^\pm|=h^2$, the cell averages are invariant under this scaling:
$(P_h^0v)|_{t^\pm}
=\int_{\widehat t^\pm}\widehat v\,d\widehat x$.
Furthermore,
$\partial_{n_e}v=h^{-1}\partial_{\widehat x_1}\widehat v$
and $ds=h\,d\widehat s$. Hence
\begin{align}
\llbracket P_h^0v\rrbracket_e
+
\int_e\partial_{n_e}v\,ds
&=
\widehat{\mathcal L}(\widehat v),
\label{eq:flux-scaling-1}\\
|\widehat v|_{2,\widehat\omega}
&=
h\,|v|_{2,\omega_e}.
\label{eq:flux-scaling-2}
\end{align}
Combining \eqref{eq:BH-flux-local}--\eqref{eq:flux-scaling-2} gives the
local estimate
\begin{equation}
\left|
\llbracket P_h^0v\rrbracket_e
+
\int_e\partial_{n_e}v\,ds
\right|
\le
C_{\rm bh}h\,|v|_{2,\omega_e}.
\label{eq:local-flux-estimate}
\end{equation}

The same argument applies to horizontal interior edges. Squaring
\eqref{eq:local-flux-estimate} and summing over
$e\in\mathcal E_h^0$, we obtain
\begin{align*}
\sum_{e\in\mathcal E_h^0}
\left|
\llbracket P_h^0z\rrbracket_e
+
\int_e\partial_{n_e}z\,ds
\right|^2
\le
C_{\rm bh}^2h^2
\sum_{e\in\mathcal E_h^0}
|z|_{2,\omega_e}^2
\le
4C_{\rm bh}^2h^2|z|_{2,\Omega}^2,
\end{align*}
Taking the square
root completes the proof.

\refstepcounter{section}
\label{proof_D}
\par\bigskip
\noindent
\textbf{Appendix \thesection. Proof of \Cref{lem:boundary-average}.}
\par\medskip

Consider a boundary cell $t=(0,h)\times(0,h)$ and its bottom edge $e=(0,h)\times\{0\}$. Then by proof of \cref{cell-edge-average-H1} and $\partial_yv(x,t)=\partial_yv(x,0)+\int_0^t\partial_{yy}v(x,s)\,ds,$ we obtain
\begin{align}
v_t^{\rm av}-v_e^{\rm av}
&=\frac12\int_0^h\partial_yv(x,0)\,dx+R_t,
\label{eq:boundary-avg-decomp}
\end{align}
where $R_t
=\frac1{2h^2}\int_0^h\int_0^h(h-s)^2\partial_{yy}v(x,s)\,ds\,dx.$ Cauchy--Schwarz inequality gives
\begin{align*}
\left|\frac12\int_0^h\partial_yv(x,0)\,dx\right|^2
\leq \frac h4\|\partial_yv\|_{0,e}^2\text{ and }
|R_t|\le \frac{h}{2\sqrt{5}} \|\partial_{yy}v\|_{0,t}.
\end{align*}
Therefore $|v_t^{\rm av}-v_e^{\rm av}|^2
\le h\|\partial_nv\|_{0,e}^2+h^2|v|_{2,t}^2.$ Repeating the argument for the four possible boundary orientations and summing gives \eqref{eq:boundary-average}. \eqref{eq:boundary-average-imp} follows from $\partial_nv=-ik v$.


\refstepcounter{section}
\label{proof_E}
\par\bigskip
\noindent
\textbf{Appendix \thesection. Proof of \Cref{eta_h-eta_ms}.}
\par\medskip

We first establish a restriction estimate that will be used for both
components of the approximation error. Let $v_i\in W_h(T_i^m)$ and
define the piecewise restriction $v$ by
\[
v|_{T_i}:=v_i|_{T_i}.
\]
The only contributions to the global $D$-norm that are not directly
controlled by the restrictions of the local energies are the jumps
across coarse interfaces. We therefore fix a fine edge
$e\subset\partial T_i\setminus\partial\Omega$ and consider the chain of
fine cells obtained by starting from the cell of $T_i$ adjacent to $e$
and moving along the outward normal until reaching $\partial T_i^m$.
Let $z_0,\ldots,z_r$ denote the values of $v_i$ on the cells along this
chain, where $r=r_{i,e}\le m$. Set
\[
\beta_{i,e}:=
\begin{cases}
1,
&\text{if the terminal edge belongs to }
\partial T_i^m\setminus\partial\Omega,\\[2mm]
|\alpha|,
&\text{if the terminal edge belongs to }\partial\Omega.
\end{cases}
\]
Writing $\beta=\beta_{i,e}$, the telescoping identity $z_0=\sum_{\ell=0}^{r-1}(z_\ell-z_{\ell+1})+z_r$ 
and the weighted Cauchy--Schwarz inequality give
\begin{equation}
|z_0|^2
\le
(r+\beta^{-1})
\left(
\sum_{\ell=0}^{r-1}|z_\ell-z_{\ell+1}|^2
+\beta|z_r|^2
\right).
\label{eq:chain-with-endpoint}
\end{equation}
The same estimate is valid when $r=0$.

For a fine edge on a coarse interface shared by $T_i$ and $T_j$, let
$z_0^{(i)}$ and $z_0^{(j)}$ be the values on the two adjacent fine
cells. Then
\[
|z_0^{(i)}-z_0^{(j)}|^2
\le
2\left(
|z_0^{(i)}|^2+|z_0^{(j)}|^2
\right).
\]
For each fixed patch, the chains emanating from parallel sides occupy
disjoint outward strips, while chains emanating from perpendicular sides
use fine edges of different orientations. Hence every chain edge and
every terminal edge is counted at most once within that patch.
Furthermore, these chain edges are disjoint from the fine edges lying
strictly inside $T_i$ and from the physical-boundary edges of $T_i$.
Therefore, summing \eqref{eq:chain-with-endpoint} over all coarse
interfaces and adding the interior-edge, physical-boundary, and mass
contributions yields
\begin{equation}
\|v\|_D^2
\le
C_{R,m}(m+1)
\sum_i\|v_i\|_{D,T_i^m}^2
=
C_{R,m}H^2
\sum_i s_i(v_i,v_i),
\label{eq:restriction-assembly}
\end{equation}
where
\begin{equation}
C_{R,m}
:=
2\max\left\{
1,\,
\max_{i,e}
\frac{r_{i,e}+\beta_{i,e}^{-1}}{m+1}
\right\},
\label{eq:restriction-constant}
\end{equation}
with the inner maximum omitted when there are no coarse interfaces.

We shall also use a slightly different form of this estimate for the
first local component. Define
\[
\mathcal D_i(v_i)
:=
\sum_{e\in\mathcal E_{T_i^m}^0}
|\llbracket v_i\rrbracket_e|^2
+
2\sum_{e\in\mathcal E_{T_i^m}^B}
|v_i|_{t_e}|^2
+
k^2\|v_i\|_{0,T_i^m}^2.
\]
Applying \eqref{eq:chain-with-endpoint} with $\beta=2$ gives
\[
r+\beta^{-1}
=
r+\frac12
\le m+1.
\]
Since $|\alpha|\le2$, the physical-boundary contribution is also
controlled by $\mathcal D_i(v_i)$. Repeating the preceding counting
argument therefore gives
\begin{equation}
\|v\|_D^2
\le
2(m+1)
\sum_i\mathcal D_i(v_i).
\label{eq:restriction-dirichlet-energy}
\end{equation}
Notice that both \eqref{eq:restriction-assembly} and
\eqref{eq:restriction-dirichlet-energy} remain valid for $m=0$.

We now apply these estimates to the decomposition of $\eta_h$.
By \eqref{q_u_condition} and \eqref{eq:first-comp-energy},
\begin{align}
\mathcal D_i(\eta_{h,T_i^m}^{\rm I})
&=
\|\Br_{h,T_i^m}^{\rm I}\|_{Q,T_i^m}^2
+
k^2\|\eta_{h,T_i^m}^{\rm I}\|_{0,T_i^m}^2
\nonumber\\
&\le
\frac{20}{9}
k^4(H+2mh)^2
\|w_{\rm ms}\|_{0,T_i^m}^2.
\label{eq:local-adjoint-source-energy}
\end{align}
Applying \eqref{eq:restriction-dirichlet-energy} to the piecewise
restriction in \eqref{decom1.2}, and using the finite overlap of the
patches, we obtain
\begin{align}
\|\eta_h^{\rm I}\|_D^2
&\le
2(m+1)
\sum_{T_i\in\mathcal T_H}
\mathcal D_i(\eta_{h,T_i^m}^{\rm I})
\nonumber\\
&\le
\frac{40}{9}
N_{\rm ov}(m+1)
k^4(H+2mh)^2
\|w_{\rm ms}\|_{0,\Omega}^2
\nonumber\\
&\le
\frac{40}{9}
N_{\rm ov}(m+1)
k^2(H+2mh)^2
\|w_{\rm ms}\|_D^2.
\label{eq:assembled-adjoint-source}
\end{align}
Here $N_{\rm ov}$ denotes a uniform bound for the multiplicities of
patch cells, patch edges, and artificial-boundary cell incidences.

By \cref{thm:adjoint-II-snapshot-representation}, the function
$\eta_{\rm snap}\in W_{\rm snap}$ defined by $\eta_{\rm snap}|_{T_i}
:=
\eta_{h,T_i^m}^{\rm II}|_{T_i}$ is well defined. Since $\eta_h|_{T_i}
=
\eta_{h,T_i^m}^{\rm I}|_{T_i}
+
\eta_{h,T_i^m}^{\rm II}|_{T_i},$ we have $\eta_h-\eta_{\rm snap}
=
\eta_h^{\rm I}.$

Consequently, \eqref{eq:assembled-adjoint-source} yields
\begin{equation}
\|\eta_h-\eta_{\rm snap}\|_D
\le
\frac{2}{3}
\sqrt{10N_{\rm ov}(m+1)}
\,k(H+2mh)
\|w_{\rm ms}\|_D.
\label{eq:eta-snapshot-error}
\end{equation}

Expand the known oversampled component
$\eta_{i,T_i^m}^{\mathrm{snap}}:=\eta_{h,T_i^m}^{\mathrm{II}}$
in the $s_i$-orthonormal eigenbasis:
$\eta_{i,T_i^m}^{\mathrm{snap}}
=\sum_{j=1}^{r_i}\widehat{\eta}_{i,j}\tilde{\psi_i^j}$.
Restricting this expansion to $T_i$ and extending by zero gives
\begin{align*}
\eta_{\mathrm{snap}}
:=\sum_{i=1}^{N_T}\eta_i^{\mathrm{snap}}
=\sum_{i=1}^{N_T}\sum_{j=1}^{r_i}
\widehat{\eta}_{i,j}\psi_i^{j,\mathrm{ms}},
\qquad
\eta_i^{\mathrm{snap}}
:=\sum_{j=1}^{r_i}
\widehat{\eta}_{i,j}\psi_i^{j,\mathrm{ms}}.
\end{align*}
The multiscale approximation is then obtained by retaining the first $l_i$
local eigenfunctions:
\begin{align*}
\eta_{\mathrm{ms}}
:=\sum_{i=1}^{N_T}\eta_i^{\mathrm{ms}}
=\sum_{i=1}^{N_T}\sum_{j=1}^{l_i}
\widehat{\eta}_{i,j}\psi_i^{j,\mathrm{ms}},
\qquad
\eta_i^{\mathrm{ms}}
:=\sum_{j=1}^{l_i}
\widehat{\eta}_{i,j}\psi_i^{j,\mathrm{ms}},
\end{align*}
with
$\eta_{i,T_i^m}^{\mathrm{ms}}
:=\sum_{j=1}^{l_i}\widehat{\eta}_{i,j}\tilde{\psi_i^j}$,
where $l_i\leq r_i$ denotes the number of local eigenfunctions retained for
the coarse region $T_i$. 

Set $e_i^m
:=
\eta_{i,T_i^m}^{\rm snap}
-
\eta_{i,T_i^m}^{\rm ms}
=
\sum_{j=l_i+1}^{r_i}
\widehat{\eta}_{i,j}\tilde{\psi}_i^j.$ 
Applying the restriction estimate \eqref{eq:restriction-assembly} to
these spectral tails gives, for every $m\ge0$,
\begin{equation}
\|\eta_{\rm snap}-\eta_{\rm ms}\|_D^2
\le
C_{R,m}H^2
\sum_{T_i\in\mathcal T_H}
s_i(e_i^m,e_i^m).
\label{eq:eta-spectral-common}
\end{equation}

By the $s_i$-orthogonality of the local eigenfunctions and the local
spectral problem,
\begin{align}
s_i(e_i^m,e_i^m)
&=
\sum_{j=l_i+1}^{r_i}
|\widehat{\eta}_{i,j}|^2
s_i(\tilde{\psi}_i^j,\tilde{\psi}_i^j)
\nonumber\\
&=
\sum_{j=l_i+1}^{r_i}
\frac{|\widehat{\eta}_{i,j}|^2}{\lambda_{j,1}}
a_i(\tilde{\psi}_i^j,\tilde{\psi}_i^j)
\nonumber\\
&\le
\frac{1}{\lambda_{l_i+1,1}}
\sum_{j=l_i+1}^{r_i}
|\widehat{\eta}_{i,j}|^2
a_i(\tilde{\psi}_i^j,\tilde{\psi}_i^j)
\nonumber\\
&=
\frac{1}{\lambda_{l_i+1,1}}
a_i(e_i^m,e_i^m).
\label{eq:eta-local-spectral-estimate}
\end{align}
Hence,
\begin{equation}
\|\eta_{\rm snap}-\eta_{\rm ms}\|_D^2
\le
\frac{C_{R,m}H^2}{\Lambda_{1,m}}
\sum_{T_i\in\mathcal T_H}
a_i(e_i^m,e_i^m),
\label{part2}
\end{equation}
where $\Lambda_{1,m}
:=
\min_{1\le i\le N_T}
\lambda_{l_i+1,1}.$

It remains to estimate the last sum. Since the local eigenfunctions are
$s_i$-orthogonal and satisfy the generalized eigenvalue problem, they
are also $a_i$-orthogonal. Therefore, the retained and discarded parts
of $\eta_{i,T_i^m}^{\rm snap}$ are $a_i$-orthogonal, and thus
\begin{equation}
a_i(e_i^m,e_i^m)
\le
a_i(
\eta_{i,T_i^m}^{\rm snap},
\eta_{i,T_i^m}^{\rm snap}
).
\label{eq:tail-ai-projection}
\end{equation}
Recalling that
\[
\eta_{i,T_i^m}^{\rm snap}
=
\eta_{h,T_i^m}^{\rm II}
=
\eta_h|_{T_i^m}
-
\eta_{i,T_i^m}^{\rm I},
\]
and using the Hermitian positive definiteness of $a_i(\cdot,\cdot)$,
we obtain
\begin{align}
\sum_{T_i\in\mathcal T_H}
a_i(e_i^m,e_i^m)
&\le
2\sum_{T_i\in\mathcal T_H}
a_i(\eta_h,\eta_h)
+
2\sum_{T_i\in\mathcal T_H}
a_i(
\eta_{i,T_i^m}^{\rm I},
\eta_{i,T_i^m}^{\rm I}
).
\label{eq:tail-ai-decomposition}
\end{align}

We first estimate the contribution of $\eta_h$. On an artificial
boundary edge
$e\subset\partial T_i^m\setminus\partial\Omega$, we have
\[
kh^2|(\eta_h)_{t_e}|^2
=
k\|\eta_h\|_{0,t_e}^2.
\]
Hence, by the definition of $a_i(\cdot,\cdot)$, the finite-overlap
property, the definition of the $D$-norm, and
\cref{thm:main-infsup},
\begin{align}
\sum_{T_i\in\mathcal T_H}
a_i(\eta_h,\eta_h)
&=
\sum_{T_i\in\mathcal T_H}
\Bigg(
\sum_{e\in\mathcal E_{T_i^m}^0}
h|\llbracket\eta_h\rrbracket_e|^2
+
\sum_{e\in
\mathcal E_{T_i^m}^B\setminus\mathcal E_h^B}
kh^2|(\eta_h)_{t_e}|^2
+
\sum_{e\in
\mathcal E_{T_i^m}^B\cap\mathcal E_h^B}
h|\alpha|
|\llbracket\eta_h\rrbracket_e|^2
\Bigg)
\nonumber\\
&\le
N_{\rm ov}
\left(
h+\frac1k
\right)
\|\eta_h\|_D^2
\nonumber\\
&\le
N_{\rm ov}
C_{Q,kh}(k^2h+k)
\|w_{\rm ms}\|_D^2,
\label{eq:ai-global-adjoint}
\end{align}
where $C_{Q,kh}
=
32(C_{T,kh}+k^{-1})^2.$

On the other hand, by the definition of $a_i(\cdot,\cdot)$,
\[
a_i(v_h,v_h)
\le
C_{\alpha,kh}\,
h\,
\|v_h\|_{D,T_i^m}^2,
\qquad
C_{\alpha,kh}
:=
\max\{1,kh\}.
\]
Using \cref{first_comp} and the finite overlap of the patches gives
\begin{align}
\sum_{T_i\in\mathcal T_H}
a_i(
\eta_{i,T_i^m}^{\rm I},
\eta_{i,T_i^m}^{\rm I}
)
&\le
\frac{20}{9}
C_{\alpha,kh}
k^4(H+2mh)^2h
\sum_{T_i\in\mathcal T_H}
\|w_{\rm ms}\|_{0,T_i^m}^2
\nonumber\\
&\le
\frac{20}{9}
N_{\rm ov}
C_{\alpha,kh}
k^4(H+2mh)^2h
\|w_{\rm ms}\|_{0,\Omega}^2
\nonumber\\
&\le
\frac{20}{9}
N_{\rm ov}
C_{\alpha,kh}
k^2(H+2mh)^2h
\|w_{\rm ms}\|_D^2.
\label{eq:ai-adjoint-source}
\end{align}
Combining
\eqref{eq:tail-ai-decomposition},
\eqref{eq:ai-global-adjoint}, and
\eqref{eq:ai-adjoint-source}, we conclude that
\begin{equation}
\sum_{T_i\in\mathcal T_H}
a_i(e_i^m,e_i^m)
\le
2N_{\rm ov}
\left(
C_{Q,kh}(k^2h+k)
+
\frac{20}{9}
C_{\alpha,kh}
k^2(H+2mh)^2h
\right)
\|w_{\rm ms}\|_D^2.
\label{eq:tail-ai-final}
\end{equation}

Finally,
\[
\eta_h-\eta_{\rm ms}
=
(\eta_h-\eta_{\rm snap})
+
(\eta_{\rm snap}-\eta_{\rm ms}).
\]
Thus, by the triangle inequality,
\eqref{eq:eta-snapshot-error}, \eqref{part2}, and
\eqref{eq:tail-ai-final},
\begin{equation}
\|\eta_h-\eta_{\rm ms}\|_D
\le
\theta_{\rm ms}\|w_{\rm ms}\|_D,
\label{ms-approximation-eta}
\end{equation}
where
\begin{align*}
\theta_{\rm ms}
={}&
\frac{2}{3}
\sqrt{10N_{\rm ov}(m+1)}
\,k(H+2mh)
\\
&+
\left[
\frac{
2C_{R,m}N_{\rm ov}H^2
}{
\Lambda_{1,m}
}
\left(
C_{Q,kh}(k^2h+k)
+
\frac{20}{9}
C_{\alpha,kh}
k^2(H+2mh)^2h
\right)
\right]^{1/2}.
\end{align*}
This completes the proof.

\refstepcounter{section}
\label{proof_F}
\par\bigskip
\noindent
\textbf{Appendix \thesection. Proof of \Cref{lem:abc-source-energy}.}
\par\medskip

Fix $T_i\in\mathcal T_H$. Let $q_e^{\mathrm{I}}$ denote the coefficient
associated with the basis function $\Bv_e$ in the expansion of
$\Bq_{h,T_i^m}^{\mathrm{I}}$, i.e.,
\[
\Bq_{h,T_i^m}^{\mathrm{I}}
=\sum_{e\in\mathcal E_{T_i^m}}q_e^{\mathrm{I}}\Bv_e.
\]
For the problem \eqref{decom_1}, the same calculation as in
\eqref{q_u_jump} and \eqref{q_u_bjump} gives
\[
q_e^{\mathrm{I}}=\llbracket u_{h,T_i^m}^{\mathrm{I}}\rrbracket_e,
\quad e\in\mathcal E_{T_i^m}^{0},
\qquad
q_e^{\mathrm{I}}=\alpha\llbracket u_{h,T_i^m}^{\mathrm{I}}\rrbracket_e,
\quad e\in\mathcal E_{T_i^m}^{B},
\]
where $\alpha=2kh/(kh+2i)$. Recall that, for
$e\in\mathcal E_{T_i^m}^{B}$,
$\llbracket u_{h,T_i^m}^{\mathrm{I}}\rrbracket_e$ denotes the value of
$u_{h,T_i^m}^{\mathrm{I}}$ on the unique fine element in $T_i^m$
adjacent to $e$.

Taking $p_h=u_{h,T_i^m}^{\mathrm{I}}$ in \eqref{decom_1} and using
the above flux relations yields
\begin{equation}
\sum_{e\in\mathcal E_{T_i^m}^{0}}
\left|\llbracket u_{h,T_i^m}^{\mathrm{I}}\rrbracket_e\right|^2
+\alpha\sum_{e\in\mathcal E_{T_i^m}^{B}}
\left|\llbracket u_{h,T_i^m}^{\mathrm{I}}\rrbracket_e\right|^2
-k^2\|u_{h,T_i^m}^{\mathrm{I}}\|_{0,T_i^m}^2
=(f,u_{h,T_i^m}^{\mathrm{I}})_{T_i^m}.
\label{eq:abc-source-identity}
\end{equation}
Since $\operatorname{Re}\alpha=2k^2h^2/(k^2h^2+4)$ and
$-\operatorname{Im}\alpha=4kh/(k^2h^2+4)>0$, the real and imaginary
parts of \eqref{eq:abc-source-identity} give
\begin{align}
\sum_{e\in\mathcal E_{T_i^m}^{0}}
\left|\llbracket u_{h,T_i^m}^{\mathrm{I}}\rrbracket_e\right|^2
+\operatorname{Re}\alpha\sum_{e\in\mathcal E_{T_i^m}^{B}}
\left|\llbracket u_{h,T_i^m}^{\mathrm{I}}\rrbracket_e\right|^2
-k^2\|u_{h,T_i^m}^{\mathrm{I}}\|_{0,T_i^m}^2
&=\operatorname{Re}(f,u_{h,T_i^m}^{\mathrm{I}})_{T_i^m},
\label{eq:abc-source-real}\\
(-\operatorname{Im}\alpha)\sum_{e\in\mathcal E_{T_i^m}^{B}}
\left|\llbracket u_{h,T_i^m}^{\mathrm{I}}\rrbracket_e\right|^2
&=-\operatorname{Im}(f,u_{h,T_i^m}^{\mathrm{I}})_{T_i^m}\ge0.
\label{eq:abc-source-imag}
\end{align}

We next estimate $\|u_{h,T_i^m}^{\mathrm{I}}\|_{0,T_i^m}$.
For any $v_h\in W_h(T_i^m)$, let $v_{j,r}$ denote the value of $v_h$
on the $(j,r)$-th fine element. Along each row,
\[
2v_{j,r}=v_{1,r}+v_{n_x,r}
+\sum_{\ell=1}^{j-1}(v_{\ell+1,r}-v_{\ell,r})
-\sum_{\ell=j}^{n_x-1}(v_{\ell+1,r}-v_{\ell,r}),
\]
and hence, by the Cauchy--Schwarz inequality,
\[
|v_{j,r}|^2\le |v_{1,r}|^2+|v_{n_x,r}|^2
+\frac{n_x-1}{2}\sum_{\ell=1}^{n_x-1}|v_{\ell+1,r}-v_{\ell,r}|^2.
\]
Summing this estimate over all cells and repeating the same argument in
the vertical direction gives
\begin{equation}
\|v_h\|_{0,T_i^m}^2
\le \frac{(H+2mh)^2}{4}
\sum_{e\in\mathcal E_{T_i^m}^{0}}|\llbracket v_h\rrbracket_e|^2
+\frac{(H+2mh)h}{2}
\sum_{e\in\mathcal E_{T_i^m}^{B}}|\llbracket v_h\rrbracket_e|^2.
\label{eq:abc-source-trace-poincare}
\end{equation}
Applying \eqref{eq:abc-source-trace-poincare} to
$u_{h,T_i^m}^{\mathrm{I}}$ and using \eqref{eq:abc-source-real}, we obtain
\begin{align*}
\left(1-\frac{k^2(H+2mh)^2}{4}\right)
\|u_{h,T_i^m}^{\mathrm{I}}\|_{0,T_i^m}^2
&\le \frac{(H+2mh)^2}{4}
\operatorname{Re}(f,u_{h,T_i^m}^{\mathrm{I}})_{T_i^m}\\
&\quad+\left[\frac{(H+2mh)h}{2}
-\frac{(H+2mh)^2}{4}\operatorname{Re}\alpha\right]
\sum_{e\in\mathcal E_{T_i^m}^{B}}
\left|\llbracket u_{h,T_i^m}^{\mathrm{I}}\rrbracket_e\right|^2.
\end{align*}
Using \eqref{eq:abc-source-imag} together with
\begin{align*}
\frac{(H+2mh)h/2-(H+2mh)^2\operatorname{Re}\alpha/4}
{-\operatorname{Im}\alpha}
&=\frac{H+2mh}{2k}
\left(1-\frac{k^2h(H+2mh-h)}{4}\right)\\
&\le\frac{H+2mh}{2k},
\end{align*}
we further obtain
\begin{align*}
\left(1-\frac{k^2(H+2mh)^2}{4}\right)
\|u_{h,T_i^m}^{\mathrm{I}}\|_{0,T_i^m}^2
&\le \frac{(H+2mh)^2}{4}
\operatorname{Re}(f,u_{h,T_i^m}^{\mathrm{I}})_{T_i^m}\\
&\quad-\frac{H+2mh}{2k}
\operatorname{Im}(f,u_{h,T_i^m}^{\mathrm{I}})_{T_i^m}\\
&\le \frac{H+2mh}{2k}
\sqrt{1+\frac{k^2(H+2mh)^2}{4}}
\|f\|_{0,T_i^m}\|u_{h,T_i^m}^{\mathrm{I}}\|_{0,T_i^m}.
\end{align*}
Here the last inequality follows by applying the Cauchy--Schwarz
inequality to the real and imaginary parts of
$(f,u_{h,T_i^m}^{\mathrm{I}})_{T_i^m}$.

By Assumption \ref{assumption}, $k(H+2mh)<1$, and therefore
$1-k^2(H+2mh)^2/4\ge3/4$. Hence
\begin{equation}
\|u_{h,T_i^m}^{\mathrm{I}}\|_{0,T_i^m}
\le\frac{\sqrt{5}}{3}\frac{H+2mh}{k}\|f\|_{0,T_i^m}.
\label{eq:abc-source-l2}
\end{equation}
It remains to estimate
$a_i(u_{h,T_i^m}^{\mathrm{I}},u_{h,T_i^m}^{\mathrm{I}})$.
By the definition of $a_i$ and $|\alpha|\le kh$,
\[
\frac1h a_i(u_{h,T_i^m}^{\mathrm{I}},u_{h,T_i^m}^{\mathrm{I}})
\le\sum_{e\in\mathcal E_{T_i^m}^{0}}
\left|\llbracket u_{h,T_i^m}^{\mathrm{I}}\rrbracket_e\right|^2
+kh\sum_{e\in\mathcal E_{T_i^m}^{B}}
\left|\llbracket u_{h,T_i^m}^{\mathrm{I}}\rrbracket_e\right|^2.
\]
Using again the real and imaginary parts of \eqref{eq:abc-source-identity},
we have
\begin{align*}
\frac1h a_i(u_{h,T_i^m}^{\mathrm{I}},u_{h,T_i^m}^{\mathrm{I}})
&\le k^2\|u_{h,T_i^m}^{\mathrm{I}}\|_{0,T_i^m}^2
+\operatorname{Re}(f,u_{h,T_i^m}^{\mathrm{I}})_{T_i^m}\\
&\quad-\left(1-\frac{kh}{2}+\frac{k^2h^2}{4}\right)
\operatorname{Im}(f,u_{h,T_i^m}^{\mathrm{I}})_{T_i^m}.
\end{align*}
Since $0<kh\le k(H+2mh)<1$,
\[
0<1-\frac{kh}{2}+\frac{k^2h^2}{4}\le1.
\]
Together with $-\operatorname{Im}(f,u_{h,T_i^m}^{\mathrm{I}})_{T_i^m}\ge0$,
this gives
\[
\frac1h a_i(u_{h,T_i^m}^{\mathrm{I}},u_{h,T_i^m}^{\mathrm{I}})
\le k^2\|u_{h,T_i^m}^{\mathrm{I}}\|_{0,T_i^m}^2
+\sqrt{2}\|f\|_{0,T_i^m}\|u_{h,T_i^m}^{\mathrm{I}}\|_{0,T_i^m}.
\]
Finally, inserting \eqref{eq:abc-source-l2}, we obtain
\begin{align*}
a_i(u_{h,T_i^m}^{\mathrm{I}},u_{h,T_i^m}^{\mathrm{I}})
&\le h\left[\frac59(H+2mh)^2
+\frac{\sqrt{10}}{3}\frac{H+2mh}{k}\right]\|f\|_{0,T_i^m}^2\\
&\le\frac{2(H+2mh)h}{k}\|f\|_{0,T_i^m}^2.
\end{align*}
This completes the proof.

\bibliographystyle{siamplain}
\bibliography{bibliography}

\end{document}